\theoremstyle{plain}
  \newtheorem{thm}{Theorem}[section]
  \newtheorem{prop}[thm]{Proposition}
  \newtheorem{lem}[thm]{Lemma}
  \newtheorem{cor}[thm]{Corollary}
  \newtheorem*{cth*}{Theorem}
  \newtheorem*{clm*}{Lemma}
\theoremstyle{definition}
  \newtheorem*{defn*}{Definition}
\theoremstyle{remark}
  \newtheorem{rem}{Remark}
\begin{document}
\title[]{On minimal decomposition of $p$-adic polynomial dynamical systems}
\author{Aihua Fan}
\address{LAMFA UMR 6140, CNRS,
Universit\'e de Picardie Jules Verne, 33, Rue Saint Leu, 80039
Amiens Cedex 1, France
%\& Department of Mathematics, Wuhan
%University, 430072 Wuhan, China
} \email{ai-hua.fan@u-picardie.fr}

\author{Lingmin Liao}
\address{LAMFA UMR 8050, CNRS
Universit\'e Paris-Est Cr\'eteil Val de Marne, 61 Avenue du
G\'en\'eral de Gaulle, 94010 Cr\'eteil Cedex, France
 \& Department of Mathematics, Wuhan
University, 430072 Wuhan, China}\email{lingmin.liao@univ-paris12.fr}
%\date{\today}
\subjclass[2000]{Primary 37E99; Secondary 11S85, 37A99}
\keywords{$p$-adic dynamical system, minimal component, quadratic
polynomial} \maketitle

\begin{abstract}
A polynomial of degree $\ge 2$ with coefficients in the ring of $p$-adic numbers $\mathbb{Z}_p$
is studied as a dynamical system on $\mathbb{Z}_p$.
It is proved that
 the dynamical behavior of such a system is totally described by its
  minimal subsystems. For an arbitrary  quadratic polynomial on
 $\mathbb{Z}_2$, we exhibit all its minimal subsystems.
\end{abstract}

\markboth{On the minimal decomposition of $p$-adic polynomial dynamical systems}{}%
%EndExpansion

\section{Introduction}
Let $\mathbb{Z}_p$ be the ring of $p$-adic integers ($p$ being a
prime number).  Let $f\in {\mathbb Z}_p[x]$ be a polynomial of
coefficients in $\mathbb{Z}_p$ and with degree $\deg f \ge 2$. It is
simple fact that $f: \mathbb{Z}_p \to \mathbb{Z}_p$ is a
$1$-Lipschitz map. In this paper we study the
 topological dynamical system $(\mathbb{Z}_p, f)$. We refer to \cite{walters}
 for dynamical terminology and \cite{Kob, Mahler, sch,Serre} for notions related to $p$-adic numbers.

Our first theorem is a general result which shows that a polynomial system admits at most countably
many minimal subsystems. This   describes to some extent  the dynamical behavior of the
system (see Theorem \ref{thm-decomposition}).

\medskip

\noindent{\bf Theorem A.} {\em  Let $f \in \mathbb{Z}_p[x]$ with
$\deg f \ge 2$. We have the following decomposition
$$
     \mathbb{Z}_p = A \bigsqcup B \bigsqcup C
$$
where $A$ is the finite set consisting of all periodic points of
$f$, $B= \bigsqcup_i B_i$ is the union of all (at most countably
many) clopen invariant sets such that each $B_i$ is a finite union
of balls and each subsystem $f: B_i \to B_i$ is minimal, and each
point in $C$ lies in the attracting basin of a periodic orbit or of
a minimal subsystem. }

\medskip

 We will refer to the above decomposition as {\em the minimal
decomposition} of the system $f : \mathbb{Z}_p \to \mathbb{Z}_p$.
A finite periodic orbit of $f$ is by definition a minimal set.
But for the convenience of the present paper, only the sets $B_i$
in the above decomposition are called {\em minimal components}.

Recently the theory of Non-Archimedean, in particular of $p$-adic,
dynamical systems has been intensively developed
(\cite{Benedetto1998, Benedetto2001a, Benedetto2001b}, \cite{DSV},
\cite{DKM}, \cite{FLWZ}, \cite{GKL2001}, \cite{HY1983},
\cite{Hsia1996, Hsia2000}, \cite{KN2001}, \cite{KLPS2007},
\cite{Lubin}, \cite{Rivera-Letelier2003}, \cite{TVW1989},
\cite{WS1998}). See also the monographes
\cite{Anashin-Khrennikov09}, \cite{KN}, \cite{Silverman} and the
bibliographies therein.

 There were few works done on the minimal decomposition. Multiplications on
$\mathbb{Z}_p$ ($p\ge 3$) were studied by Coelho and Parry
\cite{coparry} and general affine maps were studied by Fan, Li, Yao
and Zhou \cite{FLYZ}. The minimal decomposition of a polynomial system has been known in these cases and only in these cases.
Quadratic polynomials will be studied at the end of the present paper.

One of interesting problems well studied in the literature is the
minimality of the system $f: \mathbb{Z}_p \to \mathbb{Z}_p$, which
corresponds to the situation where $A=C=\emptyset$ and $B$ consists
of one minimal component (\cite{Anashin94, Anashin98, Anashin1,
Anashin2, CFF,DZ, K1991, Khrennikov2003, khr, KN, Knuth, Larin}).

 The above
theorem shows that there are only a finite number of periodic
orbits. The possible periods are shown in the following theorem (see
Theorem \ref{possible-periods}). The statements 1)-3) were known to
Pezda \cite{Pezda}, the statements 1) and 2) are also found by
Desjardins and Zieve \cite{DZ} in a different way. The statement 4)
is new.

\medskip
\noindent {\bf Theorem B.} {\em  Let $f \in \mathbb{Z}_p[x]$.\\
\indent \mbox{\rm 1)} \  If $p\ge 5$, the periods of periodic orbits
are of the form $a b$
with $a| (p-1)$ and $1\le b \le p$.\\
\indent \mbox{\rm 2)} \  If $p=3$, the periods of periodic orbits
must be
$1,2,3,4,6$ or $9$.\\
\indent \mbox{\rm 3)} \  If $p=2$, the periods of periodic orbits
must be $1,2$ or $4$.}\\
\indent \mbox{\rm 4)} \ Let $p=2$. If there is $4$-periodic orbit,
then $f(z \!\!\!\mod 2) \!\!\!\mod 2$ should be a permutation on $\mathbb{Z}/2\mathbb{Z}$.
There is no $4$-periodic orbit for quadratic polynomials.

\medskip

What kind of set can be a minimal component of a polynomial system?
In a recent work, Chabert, Fan and Fares \cite{CFF} showed that each
minimal component $B_i$ must be a Legendre set and that in general,
any Legendre set is a minimal component of  some $1$-Lipschtz
system. We will show that for a polynomial system, the minimal
components $B_i$ are Legendre sets of special forms. Let
$(p_s)_{s\ge 1}$ be a sequence of positive integers such that
$p_s|p_{s+1}$ for every $s\ge 1$. We denote by $\mathbb{Z}_{(p_s)}$
the inverse limit of $\mathbb{Z}/p_s \mathbb{Z}$, which is called
 an odometer. The map $x \to x+1$ is called the adding machine on
$\mathbb{Z}_{(p_s)}$. We will prove the following theorem (see
Theorem \ref{structure-minimal}).

\medskip
\noindent{\bf Theorem C.} {\em  Let $f \in \mathbb{Z}_p[x]$ with
$\deg f \ge 2$. If $E$ is a minimal clopen invariant set of $f$,
then $f : E \to E$ is conjugate to the adding machine on an odometer
$\mathbb{Z}_{(p_s)}$, where  $$(p_s) = (k, kd, k dp, kdp^2,
\cdots)$$ with integers $k$ and $d$ such that $1 \leq k\leq p$ and $d|
(p-1)$.}
\medskip

As we have already pointed out, the minimal decomposition is fully studied for affine maps.
It seems  much more difficult  to study the minimal decomposition for higher order polynomials.
In this paper, we  try to attack the
problem for quadratic polynomials.  For an arbitrary $2$-adic quadratic polynomial $$ f(x) = ax^2 + bx
+c$$
 on $\mathbb{Z}_2$, we find all its minimal components.

As we shall see,
such a quadratic system $f: \mathbb{Z}_2 \to \mathbb{Z}_2$ is conjugate to one
of the following quadratic polynomials $$ x^2-\lambda, \quad x^2+bx,
\quad x^2+x-d$$
 where $\lambda \in \mathbb{Z}_2$, $b\equiv 1 \ ({\rm mod}\ 2)$ and $\sqrt{d} \not\in \mathbb{Z}_2$.
   Our results are
 stated in  Theorems \ref{x^2-lambda}-\ref{x^2+x-d-3}. Let us state here some of these
 results.

\medskip
\noindent{\bf Theorem D.}
{\em  Consider the polynomial $x^2-\lambda$ on $\mathbb{Z}_2$.\\
 \indent {\rm 1)} If $\lambda \equiv 0 \ ({\rm mod} \ 4) $, then there are
 two attracting fixed points, one in  $4\mathbb{Z}_2 $ with basin $2\mathbb{Z}_2 $, and the other one in $1+
 4\mathbb{Z}_2$ with basin $1+2\mathbb{Z}_2$.\\
 \indent {\rm 2)} If $\lambda \equiv 1 \ ({\rm mod} \ 4) $, then the whole
 $\mathbb{Z}_2 $ is attracted into a periodic orbit of period $2$ with one orbit point in  $4\mathbb{Z}_2 $, and the other one in $3+
 4\mathbb{Z}_2$.\\
 \indent {\rm 3)} If $\lambda \equiv 2 \ ({\rm mod} \ 4) $, then there are
 two attracting fixed points, one in  $2+4\mathbb{Z}_2 $ with basin $2\mathbb{Z}_2 $, and the other one in $3+
 4\mathbb{Z}_2$ with basin $1+2\mathbb{Z}_2 $.\\
 \indent {\rm 4)} If $\lambda \equiv 3 \ ({\rm mod} \ 4) $, then the whole
 $\mathbb{Z}_2 $ is attracted into a periodic orbit of period $2$ with one orbit point in  $1+4\mathbb{Z}_2 $, and the other one in $2+
 4\mathbb{Z}_2$.}
\medskip

\medskip
\noindent{\bf Theorem E.} {\em Consider the polynomial $f(x) =
x^2+x$ on $\mathbb{Z}_2[x]$.  There is  one fixed point $0$. We have
 $f(1+2\mathbb{Z}_2) \subset 2\mathbb{Z}_2$ and we can decompose $2\mathbb{Z}_2$
 as
\[2\mathbb{Z}_2=\{0\}\bigsqcup
\left(\bigsqcup_{n\geq 2} 2^{n-1} +2^n\mathbb{Z}_2 \right). \] Each
$2^{n-1} +2^n\mathbb{Z}_2$ ($n\geq 2$) consists of $2^{n-2}$ pieces
of minimal components:
\[2^{n-1} +t2^n +2^{2n-2}\mathbb{Z}_2, \quad t=0, \dots , 2^{n-2}-1.\]
}

\medskip
\noindent{\bf Theorem F.} {\em Consider the polynomial
$f(x)=x^2+x-d$ with $d\equiv 3 \ ({\rm mod} \ 4)$.  Then
$f(2\mathbb{Z}_2)
 \subset 1+2\mathbb{Z}_2$ and  $1+2\mathbb{Z}_2$ is the unique
 minimal component of $f$.
}
\medskip

%From this study, we can see
%not only the difficulties but also the beauties in the problem of
%minimal components decomposition.

The main idea used in the paper comes from Desjardins and Zieve's
work \cite{DZ} and the Ph.D thesis of Zieve \cite{Zieve}. Let $E$ be
an $f$-invariant compact set. It is now well known  that the
subsystem $(E, f)$ is minimal if and only if the induced map $f_n:
E/p^n\mathbb{Z} \to E/p^n\mathbb{Z}$ is minimal (transitive) for any
$n\ge 1$ (see \cite{Anashin2,CFF}). The idea of Desjardins and Zieve
is to establish relations between $f_n$'s cycles and $f_{n+1}$'s
cycles, by linearizing the $k$-th iteration $f_{n+1}^k $ on a cycle
of $f_n$ of length $k$.

The paper is organized as follows. In Section \ref{preliminary}, we
give a full development of the idea in \cite{DZ} by studying the
induced dynamical systems $f_n$ on $\mathbb{Z}/p^n\mathbb{Z} $ when
$p\geq 3 $. Section \ref{top-2-adic} is devoted to the case of $p=2$
which was not treated in \cite{DZ}. As we shall see, the situation
in the case $p=2$ is not exactly the same as in the case $p\ge 3$.
In Sections \ref{formation} and \ref{typicalversion}, we investigate
how  a minimal component is formed  by analyzing  the reduced maps
$f_n$ ($n\geq 1 $) and we prove the decomposition theorem.  In
Section \ref{typicalversion}, we discuss the possible forms of
minimal components. In Section \ref{2-adic}, we give a detailed
description of the minimal decomposition for  an arbitrary quadratic
polynomial system on $ \mathbb{Z}_2$.
%In the last section, we give some other
%results of the quadratic polynomials on $\mathbb{Q}_p $ with $p\geq
%3$.

\section{Induced dynamics on $\mathbb{Z}/p^n \mathbb{Z}$ ($p\ge 3$)} \label{preliminary}
The main core of this section follows Desjardins and Zieve
\cite{DZ}. We shall give more details and rewrite some proofs for
reader's convenience. The case $p=2$, which is a little bit special,
will be fully discussed in the next section.

Let $p\ge 3$ be a prime (we may replace $3$ by $2$ in many places).
Let $n\ge 1$ be a positive integer. Denote by $f_n$ the induced
mapping of $f$ on $\mathbb{Z}/p^n\mathbb{Z}$, i.e.,
$$f_n (x \ {\rm mod} \ p^{n})=f (x)  \ \quad \ \ {\rm mod} \ p^{n}.$$
 Many properties of the dynamics $f$ are linked to those of
$f_n$. One is the following.

\begin{thm}[\cite{Anashin2}, \cite{CFF}]\label{minimal-part-to-whole}
Let $f\in \mathbb{Z}_p[x]$ and $E\subset \mathbb{Z}_p$ be a compact
$f$-invariant set. Then $f: E\to E$ is minimal if and only if $f_n:
E/p^n\mathbb{Z}_p \to E/p^n\mathbb{Z}_p$ is minimal for each $n\ge
1$.
\end{thm}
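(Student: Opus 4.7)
The plan is to establish both implications by exploiting the semiconjugacy $\pi_n \circ f = f_n \circ \pi_n$, where $\pi_n : \mathbb{Z}_p \to \mathbb{Z}/p^n\mathbb{Z}$ is the natural reduction. This identity is immediate because $f$ has coefficients in $\mathbb{Z}_p$ and $\pi_n$ is a ring homomorphism; combined with $f$-invariance of $E$, it shows $f_n$ preserves the finite set $\pi_n(E)$ and is well-defined on it. The underlying topological fact that will drive everything is that $\pi_n(E)$ is finite, so minimality there is just transitivity (a single cycle), and that the sets $\pi_n^{-1}(\bar{x})$ form a basis of the $p$-adic topology on $E$.

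For the ``only if'' direction, I would fix $n$, take any $\bar{x} \in \pi_n(E)$ with a lift $x \in E$, and recall that the forward orbit $\{f^k(x)\}_{k\ge 0}$ is dense in $E$ by minimality. Applying the continuous surjection $\pi_n \colon E \to \pi_n(E)$, the $f_n$-orbit of $\bar{x}$ is dense in $\pi_n(E)$; since this target is finite, dense means equal, so $f_n$ is transitive on $\pi_n(E)$, hence minimal.

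For the converse, I would argue by contrapositive: let $F \subseteq E$ be a nonempty closed $f$-invariant set, and show $F = E$. For each $n$, the image $\pi_n(F)$ is a nonempty $f_n$-invariant subset of $\pi_n(E)$, so minimality of $f_n$ forces $\pi_n(F) = \pi_n(E)$. Now for any $y \in E$ and any $n$, I can pick $z_n \in F$ with $\pi_n(z_n) = \pi_n(y)$, i.e.\ $|y - z_n|_p \le p^{-n}$; letting $n \to \infty$ and using closedness of $F$ in the compact space $\mathbb{Z}_p$ gives $y \in F$, so $E \subseteq F$.

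The result is essentially structural and does not use that $f$ is a polynomial beyond the fact that it reduces mod $p^n$, so there is no serious obstacle. The only point requiring minor care is the final approximation step: one needs that agreement of $\pi_n$-images for every $n$ upgrades to $p$-adic convergence, which is precisely the defining property of the inverse limit topology on $\mathbb{Z}_p$.
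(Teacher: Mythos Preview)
Your argument is correct and is the standard proof of this fact. Note, however, that the paper does not actually prove this theorem: it is stated with references to \cite{Anashin2} and \cite{CFF} and used as a known result, so there is no ``paper's own proof'' to compare against. Your write-up would serve perfectly well as the missing justification; the only cosmetic point is that what the paper writes as $E/p^n\mathbb{Z}_p$ is exactly your $\pi_n(E)$, and you might say explicitly that on a finite set a self-map with a dense orbit is automatically a single cyclic permutation (so ``minimal'' and ``transitive'' coincide there), which you use implicitly in both directions.
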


It is clear that if $f_n: E/p^n\mathbb{Z}_p \to E/p^n\mathbb{Z}_p$
is minimal, then $f_m: E/p^m\mathbb{Z}_p \to E/p^m\mathbb{Z}_p$ is
also minimal for each $1\le  m <n$. So, the above theorem shows that
it is important to investigate under what condition, the minimality
of $f_{n}$ implies that of $f_{n+1}$.

Assume that $\sigma=(x_1, \cdots, x_k) \subset
\mathbb{Z}/p^n\mathbb{Z}$ is a cycle of $f_n$ of length $k$ (also
called $k$-cycle), i.e.,
$$ f_n(x_1)=x_2, \cdots, f_n(x_i)=x_{i+1}, \cdots, f_n(x_k)=x_1.$$
In this case we also say $\sigma$ is at level $n$. Let
$$ X:=\bigsqcup_{i=1}^k  X_i \ \ \mbox{\rm where}\ \
X_i:=\{ x_i +p^nt; \ t=0, \cdots,p-1\} \subset
\mathbb{Z}/p^{n+1}\mathbb{Z}.$$
 Then
\[
f_{n+1}(X_i) \subset X_{i+1}  \ (1\leq i \leq k-1) \ \ \mbox{\rm
and}\ \  f_{n+1}(X_k) \subset X_1.\]

%We remind the readers that sometimes we will also consider a point
%in $x_k$ at a cycle $\sigma $ as a ball in $\mathbb{Z}_p$.

In the following we shall study the behavior of the finite dynamics
$f_{n+1}$ on the $f_{n+1}$-invariant set $X$ and determine all
cycles in $X$ of $f_{n+1}$, which will be  called  {\it lifts} of
$\sigma$ (from level $n$ to level $n+1$). Remark that the length of any lift $\tilde{\sigma}$ of
$\sigma$ is a multiple of $k$.

 Let $g:=f^k$ be the $k$-th iterate of $f$. Then, any point in $\sigma$
 is fixed by $g_n$, the $n$-th induced map of $g$.
 For $x\in \sigma$, denote
\begin{eqnarray}
& &a_n(x):=g'(x)=\prod_{j=0}^{k-1} f'(f^j(x)) \label{def-an} \\
& &b_n(x):=\frac{g(x)-x}{p^n}=\frac{f^k(x)-x}{p^n}.\label{def-bn}
\end{eqnarray}
The values on the cycle $\sigma=(x_1,\dots,x_k )$ of the functions
$a_n$ and $b_n$ are important for our purpose. They define, for each $x$, an
affine map
$$\Phi(x,t)=b_n(x)+a_n(x) t \qquad (x \in \sigma, t \in \mathbb{Z}/p \mathbb{Z}).$$
The $1$-order Taylor expansion of $g$ at $x$ implies
\begin{eqnarray}\label{linearization}
 g(x+p^n t) \equiv x+p^n b_n(x) + p^n a_n(x) t
 \equiv x + p^n \Phi(x,t) \quad ({\rm mod} \ p^{2n}).
 \end{eqnarray}
An important consequence of the last formula shows that $g_{n+1}:
X_i \to X_i$ is conjugate to the linear map $$ \Phi(x_i, \cdot):
\mathbb{Z}/p\mathbb{Z} \to \mathbb{Z}/p\mathbb{Z}.
$$
We could call it  the {\em linearization} of $g_{n+1}: X_i \to X_i$.

For any $x\in X$ (and even for any $x\in \mathbb{Z}_p$), we can
define the values of $a_n(x)$ and $b_n(x)$ by the formulas
(\ref{def-an}) and (\ref{def-bn}). As we shall see in the following
lemma, the coefficient $a_n(x)$ (mod $p$) is always constant on
$X_i$ and the coefficient $b_n(x)$ (mod $p$) is also constant on
$X_i$ but under the condition $a_n(x)\equiv 1$ (mod $p$).

Denote by $v_p(n)$ the $p$-valuation of $n$.
%We also simply denote
%by $v_2(n)$ the $2$-valuation of $n$.

\begin{lem}\label{lem1}
Let $n\geq 1 $ and $\sigma=(x_1, \cdots, x_k)$ be a $k$-cycle of $f_n$. \\
 \indent {\rm (i)}\ For $1\leq i,j\leq k$, we have
$$a_n(x_i)\equiv a_n(x_j) \ \ ({\rm mod} \ p^n).$$
\indent {\rm (ii)}\ For for $1\leq i\leq k$ and $0\le t\le p-1$, we
have $$ a_n(x_i+p^nt)\equiv a_n(x_i) \ \ ({\rm mod} \ p^n).$$
\indent {\rm (iii)}\ For $1\leq i\leq k$ and $0\le t\le p-1$, we
have
\[ b_n(x_i+p^n t) \equiv b_n(x_i) \quad ({\rm mod} \ p^A),
\]
where
 $A:=\min \{ v_p (a_n(x_i)-1),n \}=\min \{ v_p (a_n(x_j)-1),n
\}$ for $1\leq i,j\leq k$.
\\
 \indent {\rm (iv)}\  For all $1\le i, j\le k$ we have
 $$\min
\{v_p(b_n(x_i)),A\}= \min \{v_p(b_n(x_j)),A \}.$$ Consequently, if
$a_n(x_i) \equiv 1 \ ({\rm mod} \ p^n)$,
$$\min \{v_p(b_n(x_i)),n \}=
\min \{v_p(b_n(x_j)),n \}. $$
\end{lem}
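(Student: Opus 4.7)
The plan rests on two ingredients: first, $f$ and its formal derivative $f'$ are $1$-Lipschitz on $\mathbb{Z}_p$ (both are polynomials with $\mathbb{Z}_p$-coefficients), so congruences modulo $p^m$ propagate through any polynomial expression; second, the linearization formula (\ref{linearization}), which records a first-order Taylor expansion of $g=f^k$ on each ball $X_i$. Throughout I pick lifts $\tilde{x}_i\in\mathbb{Z}_p$ of the cycle elements, and treat the four parts in order.

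Parts (i) and (ii) are Lipschitz bookkeeping. For (i), the cycle property gives $f^j(\tilde{x}_i)\equiv \tilde{x}_{i+j\bmod k}\pmod{p^n}$, so $f'(f^j(\tilde{x}_i))\equiv f'(\tilde{x}_{i+j\bmod k})\pmod{p^n}$; hence $a_n(x_i)$ is, modulo $p^n$, a cyclic product whose value does not depend on the starting index $i$. For (ii), replacing $\tilde{x}_i$ by $\tilde{x}_i+p^n t$ perturbs every $f^j(\tilde{x}_i)$ by a $p^n$-multiple, hence each factor $f'(f^j(\cdot))$ and the whole product by $0$ modulo $p^n$. For (iii), I apply (\ref{linearization}) at $x_i$: subtracting $x_i+p^n t$ from both sides and dividing by $p^n$ yields
\[
b_n(x_i+p^n t)\equiv b_n(x_i)+(a_n(x_i)-1)\,t\pmod{p^n},
\]
and the right-hand difference has $p$-valuation at least $\min\{v_p(a_n(x_i)-1),n\}=A$.

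The main obstacle is (iv). I would exploit the commutation $f\circ g=g\circ f$: choosing compatible lifts so that $\tilde{x}_{i+1}=f(\tilde{x}_i)$, writing $g(\tilde{x}_i)\equiv\tilde{x}_i+p^n b_n(x_i)\pmod{p^{2n}}$, and applying a first-order Taylor expansion of $f$ gives
\[
g(\tilde{x}_{i+1})=f(g(\tilde{x}_i))\equiv \tilde{x}_{i+1}+p^n b_n(x_i)\,f'(\tilde{x}_i)\pmod{p^{2n}},
\]
whence $b_n(x_{i+1})\equiv b_n(x_i)\,f'(\tilde{x}_i)\pmod{p^n}$. Iterating along the cycle produces $b_n(x_j)\equiv b_n(x_i)\,u\pmod{p^n}$ for a partial product $u$ of the factors $f'(\tilde{x}_\ell)$. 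When $A\ge 1$, $a_n(x_i)\equiv 1\pmod p$, so each $f'(\tilde{x}_\ell)$, and hence $u$, is a unit in $\mathbb{Z}_p$; since multiplication by a unit preserves $p$-valuation, and $A\le n$ means the congruence survives modulo $p^A$, the equality of the two minima in (iv) follows by splitting into the cases $v_p(b_n(x_i))<A$ and $v_p(b_n(x_i))\ge A$. The case $A=0$ is vacuous, and the ``Consequently'' clause is just the specialization $A=n$.
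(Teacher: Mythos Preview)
Your proof is correct and follows essentially the same route as the paper: parts (i)--(iii) are handled identically via the Lipschitz property and the linearization (\ref{linearization}), and for (iv) both you and the paper derive the key relation $b_n(x_{i+1})\equiv b_n(x_i)\,f'(x_i)\pmod{p^n}$ from the commutation $f\circ g=g\circ f$ together with a first-order Taylor expansion of $f$, then conclude by observing that each $f'(x_\ell)$ is a unit once $A\ge 1$. Your device of choosing compatible lifts $\tilde{x}_{i+1}=f(\tilde{x}_i)$ is a small cosmetic variation that spares one explicit appeal to (iii), but otherwise the arguments coincide.
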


\begin{proof}
 Assertion {\rm (i)} follows directly from the definition of
$a_n(x_i)$ and the fact that $\sigma = (x_i, f_n(x_i), \cdots,
f_n^{k-1}(x_i))$.  The assertion (ii) is a direct consequence of
\[
a_n(x_i+p^nt) \equiv  \prod_{j=1}^k f'(f^j(x_i+p^nt)) \equiv
\prod_{j=1}^k f'(f^j(x_i))  \quad ({\rm mod} \ p^n) .
\]

The $1$-order Taylor expansion of $g$ at $x_i$ gives
\begin{align*}
g(x_i+p^n t) -(x_i+p^n t) %&\equiv g(x_i) +p^n t g'(x_i) - x_i-p^n t \quad ({\rm mod} \ p^{2n})\\
        %&=
 \equiv p^n\left(\frac{g(x_i)-x_i}{p^n}\right) +p^n t(g'(x_i)-1) \quad ({\rm mod} \ p^{2n}).
\end{align*}
Hence
\[ b_{n}(x_i+p^n t)\equiv b_n(x_i) + t (a_n(x_i)-1) \quad ({\rm mod} \ p^{n}). \]
Then (iii) follows.

Write
$$
g(f(x_i))-f(x_i) = f(f^k(x_i))-f(x_i) = f(x_i+p^n b_n(x_i)) -f(x_i).
$$
The $1$-order  Taylor expansion $f$ at $x_i$ leads to $$
g(f(x_i))-f(x_i) \equiv p^n b_n(x_i) f'(x_i) \quad ({\rm mod} \
p^{2n}).
$$
Hence we have
\[b_{n}(f(x_i))\equiv b_n(x_i) f'(x_i) \quad ({\rm mod} \ p^{n}).
\]
Since when $A= 0$, the result is obvious, we may suppose that $ A
\neq 0$. Then $a_n(x_i) \equiv 1 \ ({\rm mod} \ p)$ (for $1\leq i
\leq k$) which implies $f'(x_i) \not\equiv 0 \ ({\rm mod} \ p)$ for
all $1\leq i \leq k$. Notice that $f(x_i) \equiv x_{i+1} \ ({\rm
mod} \ p^n) $. Then by (iii), we obtain (iv).
\end{proof}

\medskip
According to  Lemma \ref{lem1} (i) and (ii),  the value of $a_n(x) \
({\rm mod} \ p^{n}) $ does not depend on  $x\in X$. According to
Lemma \ref{lem1} (iii) and (iv), whether $b_n(x) \equiv 0 \ ({\rm
mod} \ p)$ does not depend on  $x\in X$ if $a_n(x) \equiv 1 \ ({\rm
mod} \ p)$. For simplicity, sometimes we shall write $a_n$ and $b_n$
without mentioning $x$.

\medskip
The above analysis allows us to
distinguish the following four behaviors  of $f_{n+1}$ on $X$:\\
 \indent {\rm (a)} If $a_n \equiv 1 \ ({\rm mod} \ p)$ and
 $b_n \not\equiv 0 \ ({\rm mod} \ p)$, then $\Phi$ preserves a single cycle of length $p$, so that
 $f_{n+1}$ restricted to $X$ preserves a single cycle of length $pk$. In this case we say $\sigma$ {\it grows}.\\
 \indent {\rm (b)} If $a_n \equiv 1 \ ({\rm mod} \ p)$ and
 $b_n \equiv 0 \ ({\rm mod} \ p)$, then $\Phi$ is the identity, so $f_{n+1}$ restricted to $X$
 preserves
 $p$ cycles  of length $k$. In this case we say $\sigma$ {\it splits}. \\
 \indent {\rm (c)} If $a_n \equiv 0 \ ({\rm mod} \ p)$, then $\Phi$ is constant, so $f_{n+1}$ restricted to $X$
 preserves
 one cycle of length $k$ and  the remaining points of $X$ are mapped into this cycle.
  In this case we say $\sigma$ {\it grows tails}. \\
  \indent {\rm (d)} If $a_n \not\equiv0, 1 \ ({\rm mod} \ p)$, then $\Phi$ is a permutation
  and the $\ell$-th iterate of $\Phi$ reads
 \begin{eqnarray*}
\Phi^\ell(x, t) = b_n(a_n^{\ell}-1) /(a_n-1) +a_n^{\ell} t
\end{eqnarray*}
so that $$ \Phi^{\ell}(t)-t =(a_n^{\ell}-1) \left( t+
\frac{b_n}{a_n-1}\right).$$
 Thus,
$\Phi$ admits a single fixed point $t=-b_n/(a_n-1)$, and the
remaining points  lie on cycles of length $d$, where $d$ is the
order of $a_n$ in $(\mathbb{Z}/p\mathbb{Z})^*$. So, $f_{n+1}$
restricted to $X$ preserves one cycle of length $k$ and
$\frac{p-1}{d}$ cycles of length $kd$. In this case we say $\sigma$
{\it partially splits}.
\medskip

Now let us study the relation between $(a_n, b_n)$ and $(a_{n+1},
b_{n+1})$. Our aim is to see the change of nature from a cycle to
its lifts.

\begin{lem}\label{anbn}
Let $\sigma=(x_1,\dots,x_k)$ be a $k$-cycle of $f_n$and let
$\tilde{\sigma}$ be a lift of $\sigma$ of length $kr$, where $r\geq
1$ is an integer.  We have
\begin{eqnarray}\label{an}
a_{n+1}(x_i+p^nt) \equiv a_n^r(x_i) \quad ({\rm mod} \ p^{n}),
\qquad (1\leq i \leq k, 0\le t \le p-1)
\end{eqnarray}
\begin{equation}\label{bn}
  \begin{split}
 & pb_{n+1}(x_i+p^n t)\\
\equiv& t(a_n(x_i)^r-1)  + b_n(x_i)\big(1+a_n(x_i)+ \cdots +
a_n(x_i)^{r-1}\big)  \quad ({\rm mod} \ p^{n}).
\end{split}
\end{equation}
\end{lem}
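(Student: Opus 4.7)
The plan is to exploit the fact that $\tilde g := f^{kr}$, whose derivatives and fixed‑point displacements define $a_{n+1}$ and $b_{n+1}$ on the lift $\tilde\sigma$, is simply the $r$-th iterate of $g := f^k$. Indeed, $\tilde\sigma$ is a cycle of $f_{n+1}$ of length $kr$ contained in $X = \bigsqcup X_i$, so its points are fixed by $\tilde g_{n+1}$, and moreover the $g$-orbit of any $y\in \tilde\sigma\cap X_i$ stays in $X_i$ (mod $p^{n+1}$, hence a fortiori mod $p^n$). This reduces both identities to iterating the formulas already available at level $n$.

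For the first identity, I would apply the chain rule:
\[
a_{n+1}(y) = (g^r)'(y) = \prod_{j=0}^{r-1} g'\bigl(g^j(y)\bigr).
\]
Since $g^j(y)\equiv x_i \pmod{p^n}$ for every $j$, Lemma \ref{lem1}(ii) (applied to the function $a_n = g'$) gives $g'(g^j(y)) \equiv a_n(x_i) \pmod{p^n}$. Multiplying the $r$ factors yields $a_{n+1}(y)\equiv a_n(x_i)^r \pmod{p^n}$, which is (\ref{an}).

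For the second identity, I would iterate the linearization (\ref{linearization}). Writing $y_j := g^j(y)$ with $y_0 = x_i+p^n t$, I claim by induction on $j$ that
\[
y_j \equiv x_i + p^n s_j \pmod{p^{2n}}, \qquad s_0 = t,\ \ s_{j+1}\equiv b_n(x_i)+a_n(x_i)s_j\pmod{p^{n}}.
\]
The step uses that $g$ is $1$-Lipschitz, so the $p^{2n}$-error in $y_j$ survives under $g$; then (\ref{linearization}) applied at $x=x_i$ with parameter $s_j$ produces $s_{j+1}$. Solving the affine recurrence gives
\[
s_r \equiv a_n(x_i)^r t + b_n(x_i)\bigl(1+a_n(x_i)+\cdots+a_n(x_i)^{r-1}\bigr)\pmod{p^n}.
\]
Since $g^r(y) - y = p^{n+1} b_{n+1}(y)$ and also $g^r(y) - y \equiv p^n(s_r - t)\pmod{p^{2n}}$, dividing by $p^n$ yields $p\, b_{n+1}(y) \equiv s_r - t \pmod{p^n}$, which is exactly (\ref{bn}).

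The only delicate point is controlling the inductive error in step~3: each application of (\ref{linearization}) contributes an $O(p^{2n})$ error, and one must check that iterating $r$ times does not worsen this precision. This is where $1$-Lipschitz continuity of $g$ is essential — it ensures that an $O(p^{2n})$ perturbation of the input propagates to only an $O(p^{2n})$ perturbation of the output, so the precision mod $p^{2n}$ is preserved throughout the iteration regardless of how large $r$ is.
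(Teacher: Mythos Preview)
Your proposal is correct and follows essentially the same route as the paper: apply the chain rule to $(g^r)'$ and reduce each factor mod $p^n$ for (\ref{an}), and iterate the linearization (\ref{linearization}) $r$ times to compute $g^r(x_i+p^nt)$ mod $p^{2n}$ for (\ref{bn}). The paper phrases the iteration as ``$\Phi^r(x_i,t)$'' rather than your explicit affine recurrence $s_{j+1}=b_n+a_n s_j$, and it does not spell out the $1$-Lipschitz error control that you (rightly) flag as the one delicate point; otherwise the arguments coincide.
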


\begin{proof}
 The formula (\ref{an}) follows from
\[
a_{n+1} \equiv (g^r)'(x_i+p^n t) \equiv (g^r)'(x_i) \equiv
\prod_{j=0}^{r-1} g'(g^j(x_i))
 \equiv a_n^r \ ({\rm mod} \ p^{n}).
 \]

 By repeating $r$ times of the linearization (\ref{linearization}), we obtain
 $$
 g^r(x_i+p^nt) \equiv x_i + \Phi^r(x_i,t) p^n \quad ({\rm mod} \
 p^{2n}),
 $$
 where $\Phi^r$ means the $r$-th composition of $\Phi$ as function
 of $t$, and
$$
\Phi^r(x_i,t) =  ta_n(x_i)^r + b_n(x_i)
 \big(1+a_n(x_i)+ \cdots + a_n(x_i)^{r-1}\big).
$$
Thus (\ref{bn}) follows from
  the definition of $b_{n+1}$ and the above two expressions.
\end{proof}
\medskip

By Lemma \ref{anbn}, we obtain immediately the following
proposition.
\begin{prop}
Let $n\geq 1$. Let $\sigma$ be a $k$-cycle of $f_n$
and $\tilde{\sigma}$ be a lift of $\sigma$. Then we have\\
 \indent {\rm 1)} if $a_n \equiv 1 \ ({\rm mod} \ p)$, then $a_{n+1} \equiv 1 \ ({\rm mod} \ p)$;\\
 \indent {\rm 2)} if $a_n \equiv 0 \ ({\rm mod} \ p)$, then $a_{n+1} \equiv 0 \ ({\rm mod} \ p)$;\\
 \indent {\rm 3)} if $a_n \not\equiv 0,1 \ ({\rm mod} \ p)$ and $\tilde{\sigma}$ is of length
 $k$, then $a_{n+1} \not\equiv 0,1 \ ({\rm mod} \ p)$;\\
 \indent {\rm 4)} if $a_n \not\equiv 0,1 \ ({\rm mod} \ p)$ and $\tilde{\sigma}$ is of length
 $kd$ where $d\ge 2$ is the order of $a_n$ in $(\mathbb{Z}/p\mathbb{Z})^*$, then $a_{n+1} \equiv 1 \ ({\rm mod} \ p)$.
\end{prop}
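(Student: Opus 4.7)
The plan is to read off all four statements as direct specializations of the congruence
\[
a_{n+1}(x_i + p^n t) \equiv a_n(x_i)^r \pmod{p^n}
\]
provided by Lemma \ref{anbn}, where $r$ is the integer such that the lift $\tilde{\sigma}$ has length $kr$. Since $n\ge 1$, reducing this congruence mod $p$ already gives $a_{n+1} \equiv a_n^r \pmod p$, so the whole proposition is a statement about the behavior of the $r$-th power of an element of $\mathbb{Z}/p\mathbb{Z}$.

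With that reduction in hand, I would simply dispose of the cases one by one. For (1), if $a_n \equiv 1 \pmod p$ then $a_n^r \equiv 1 \pmod p$ for any $r\ge 1$, giving $a_{n+1}\equiv 1 \pmod p$. For (2), if $a_n \equiv 0 \pmod p$ then $a_n^r \equiv 0 \pmod p$ (using $r\ge 1$, which holds because any lift has length a positive multiple of $k$, as remarked just after the definition of lifts). For (3), $\tilde{\sigma}$ has length $k$ means $r=1$, so $a_{n+1}\equiv a_n \pmod p$ and the hypothesis $a_n\not\equiv 0,1$ transfers verbatim. For (4), $\tilde{\sigma}$ has length $kd$ means $r=d$, and $d$ is by assumption the multiplicative order of $a_n$ in $(\mathbb{Z}/p\mathbb{Z})^*$, so $a_n^d \equiv 1 \pmod p$ and hence $a_{n+1}\equiv 1 \pmod p$.

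There is essentially no obstacle: all the analytic work (the $1$-order Taylor expansions, the linearization of $g_{n+1}$ on each $X_i$, and the identification of the possible cycle lengths of a lift with $1$, $p$, or $d$) is already packaged in Lemma \ref{anbn} and in the four behaviors (a)--(d) listed before it. The only thing to check is that the value of $r$ dictated by each case of the proposition is consistent with the structure of lifts described there, and that the congruence (\ref{an}) is applied modulo $p$ rather than modulo $p^n$, which is harmless. The statement then follows in a couple of lines per case.
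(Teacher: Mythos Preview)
Your proposal is correct and matches the paper's approach exactly: the paper simply states that the proposition follows immediately from Lemma~\ref{anbn}, and your argument spells out precisely this deduction by reducing the congruence $a_{n+1}\equiv a_n^r \pmod{p^n}$ modulo $p$ and checking each case. There is nothing to add.
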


This result is interpreted as follows in dynamical system language:\\
 \indent {\rm 1)} If $\sigma$ grows or splits, then any lift $\tilde{\sigma}$ grows
or splits. \\
 \indent {\rm 2)}
If $\sigma$ grows tails, then the single lift $\tilde{\sigma}$
also grows tails. \\
 \indent {\rm 3)}
 If $\sigma$ partially splits, then the
 lift $\tilde{\sigma}$
 of the same length as $\sigma$ partially splits, and the other lifts  of length $kd$ grow or split.

\medskip

If $\sigma=(x_1, \cdots, x_k)$ is a cycle of $f_n$ which grows
tails, then $f$ admits a $k$-periodic point $x_0$ in the clopen set
$\mathbb{X} =\bigsqcup_{i=1}^k (x_i +p^n \mathbb{Z}_p)$ and
$\mathbb{X}$ is contained in the attracting basin of the periodic
orbit $x_0, f(x_0), \cdots, f^{k-1}(x_0)$.
%
%Further studies are needed to determine the exact change of nature
%from a cycle to its lifts. Technically, we shall develop our
%functions by $2$-order Taylor formula.

%\begin{lem}\label{new-anbn}
%Let $\sigma=(x_1,\dots, x_k)$ be a splitting cycle of $f_n$. Then
%for $1\leq i \leq k$ and $0\leq t \leq p-1$, we have
%\begin{eqnarray*}
%p b_{n+1}(x_i+p^n t) \equiv b_{n}(x_i) + t (a_n(x_i)-1)  \quad ({\rm
%mod} \ p^{n}).
%\end{eqnarray*}
%%\[ 2(pb_{n+1}(x_i+ p^n t) - b_n(x_i)) \equiv t(a_{n+1}(x_i+  p^n t)+a_n(x_i) -2)  \quad ({\rm mod} \ p^{2n}). \]
%\end{lem}

%$Proof.$ The assertion directly follows from (\ref{bn}) (with
%$r=1$). For the second assertion, write $$p^{n+1}b_{n+1}(x_i+p^n t)
%= g(x_i+p^n t)-(x_i+p^n t).$$
% Then apply the $2$-order Taylor expansion of
%$g$ at $x_i$ to get
%\begin{align*}
%p^{n+1}b_{n+1}(x_i+p^n t) \equiv p^n b_{n}(x_i) +p^n t
%(a_n(x_i)-1) +\frac{1}{2} p^{2n} t^2 g''(x_i) \quad ({\rm mod} \
%p^{3n}).
%\end{align*}
%It follows that
%\begin{eqnarray}\label{b_n+1}
%p b_{n+1}(x_i+p^n t) \equiv b_{n}(x_i) + t (a_n(x_i)-1) +
%\frac{1}{2} p^n t^2 g''(x_i) \quad ({\rm mod} \ p^{2n}).
%\end{eqnarray}
%
%Since $\sigma$ splits,  $x_i + p^nt$ lies in a $k$-cycle of
%$f_{n+1}$ and then $a_{n+1}(x_i+p^n t) = g'(x_i+p^n t)$. This,
%together with the $1$-order Taylor expansion of $g'$ at $x_i$, gives
%\begin{eqnarray}\label{a_n+1}
%p^n t g''(x_i) \equiv a_{n+1}(x_i+p^n t) - a_n(x_i) \quad ({\rm
%mod} \ p^{2n}).
%\end{eqnarray}
%Substituting this into (\ref{b_n+1}), we obtain the second desired
%equality.
%\hfill $\Box$

With the preceding preparations, we are ready to prove  the
following Propositions \ref{cycle-p>3}-\ref{partiallysplits} which
predict the behavior of the lifts of a cycle $\sigma$ by the
properties of $\sigma$. We refer the reader to \cite{DZ} for their
proofs. Otherwise we can follow the similar proofs  of Propositions
\ref{grows}-\ref{weaklysplits} in  the case $p=2$.

\begin{prop}[\cite{DZ}]\label{cycle-p>3}
Let $\sigma$ be a growing cycle of $f_n$ and $\tilde{\sigma}$
be the unique lift of $\sigma$. \\
 \indent {\rm 1)} If $ p\geq 3 $ and $n\geq 2$ then $\tilde{\sigma}$ grows.\\
 \indent {\rm 2)} If $p>3$ and $n\geq 1$ then $\tilde{\sigma}$ grows. \\
 \indent {\rm 3)} If $p=3$ and $n=1$, then $\tilde{\sigma}$ grows if and only if
 $b_1(x) \not\equiv g''(x)/2 \ ({\rm mod} \ p)$.
\end{prop}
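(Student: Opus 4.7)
The plan is to track $b_{n+1}$ modulo $p$ and decide when it is nonzero. Since $\sigma$ grows, $a_n \equiv 1 \pmod p$ and $b_n \not\equiv 0 \pmod p$, and by the preceding proposition the unique lift $\tilde\sigma$ (of length $pk$) already satisfies $a_{n+1} \equiv 1 \pmod p$. Hence $\tilde\sigma$ grows if and only if $b_{n+1} \not\equiv 0 \pmod p$. My starting point will be Lemma~\ref{anbn} with $r=p$, which expresses $pb_{n+1}(x_i+p^n t)$ modulo $p^n$ in terms of $a_n$, $b_n$, $t$. Writing $a_n = 1 + p^s u$ with $u$ a $p$-adic unit and $s \geq 1$, the binomial theorem (using $p \geq 3$ to absorb the $\binom{p}{2}$ term) gives $v_p(a_n^p - 1) = s+1$ and $\sum_{k=0}^{p-1} a_n^k \equiv p \pmod{p^2}$.

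For part (1), feeding this into Lemma~\ref{anbn} and dividing by $p$ produces
\[ b_{n+1}(x_i + p^n t) \equiv t\, p^s u + b_n(x_i) \pmod{p^{n-1}}; \]
for $n \geq 2$ this collapses mod $p$ to $b_{n+1} \equiv b_n$, which is a unit, so $\tilde\sigma$ grows.

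The main obstacle is $n = 1$: Lemma~\ref{anbn} then degenerates to a congruence modulo $p^0 = 1$ and carries no information about $b_2 \bmod p$. To overcome this, I will discard the linearization $\Phi$ in favor of a second-order Taylor expansion of $g = f^k$,
\[ g(x_i + pt) \equiv g(x_i) + g'(x_i)(pt) + \tfrac{g''(x_i)}{2}(pt)^2 \pmod{p^3}, \]
which is valid for every prime since $g \in \mathbb{Z}_p[x]$ has Taylor coefficients in $\mathbb{Z}_p$. Writing $a_1 = 1 + p\alpha$ and $C := g''(x_i)/2$, and parametrizing the forward orbit by $y_j = x_i + pU_j + p^2 V_j + O(p^3)$, the expansion delivers the recursion
\[ U_{j+1} = b_1 + U_j, \qquad V_{j+1} = V_j + \alpha U_j + C U_j^2, \]
so $U_j = jb_1 + t$ and $U_p = pb_1 + t$. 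Consequently $g^p(y_0) - y_0 \equiv p^2(b_1 + V_p) \pmod{p^3}$, i.e., $b_2 \equiv b_1 + V_p \pmod p$.

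The last step will be to evaluate $V_p \bmod p$. The $\alpha$-contribution involves $\sum_{j=0}^{p-1} j = \tfrac{p(p-1)}{2}$, divisible by $p$ for odd $p$, so it vanishes. The $C$-contribution reduces to $C\, b_1^2 \cdot \tfrac{p(p-1)(2p-1)}{6} \pmod p$. For $p \geq 5$ the factor $6$ is coprime to $p$, hence the coefficient is divisible by $p$: then $V_p \equiv 0$ and $b_2 \equiv b_1 \not\equiv 0 \pmod p$, which gives (2). For $p = 3$ the $6$ in the denominator absorbs the $p$, leaving $V_3 \equiv 2 C b_1^2 \equiv -C b_1^2 \pmod 3$; since $b_1$ is a unit mod $3$, $b_1^2 \equiv 1 \pmod 3$, and so $b_2 \equiv b_1 - C \pmod 3$. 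Thus $\tilde\sigma$ grows if and only if $b_1 \not\equiv C = g''(x_i)/2 \pmod 3$, which is (3).
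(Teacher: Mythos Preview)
Your proof is correct. The paper does not actually give a proof of this proposition but refers to \cite{DZ} and suggests following the analogous $p=2$ arguments (Propositions~\ref{grows}--\ref{weaklysplits}); your treatment of part~(1) via Lemma~\ref{anbn} with $r=p$ is exactly the line taken in the $p=2$ analog (Lemma~\ref{anbn-2} and the proof of Proposition~\ref{grows}), and your second-order Taylor refinement for $n=1$ is the natural way to extract $b_2 \bmod p$ when Lemma~\ref{anbn} degenerates, which is in the spirit of the linearization technique and presumably what \cite{DZ} does.
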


According to 1) and 2) of Proposition \ref{cycle-p>3}, in the cases
$ p\geq 3, n\geq 2$ and $p>3, n\geq 1$, if $\sigma =(x_1,
\cdots,x_k)$ grows then its lift also grows, and the lift of the
lift will  grow and so on. So, the clopen set
$$\mathbb{X}=\bigsqcup_{i=1}^k (x_i + p^n \mathbb{Z}_p)$$ is a minimal
set by Theorem \ref{minimal-part-to-whole}.

Let $$A_n(x):= v_p(a_n(x)-1), \quad B_n(x):= v_p(b_n(x)).$$
 By Lemma
\ref{lem1}, for a cycle $\sigma=(x_1,\dots,x_k)$, $\min
\{A_n(x_i),n\} $ does not depend on the choice of $x_i, 1\leq i \leq
k$ and if $B_n(x_i) < \min \{A_n(x_i), n \} $ then $B_n(x_i)$ does
not depend on the choice of $x_i, 1\leq i \leq k$. Sometimes, there
is no difference when we choose $x_i$ or $x_j$ in the cycle. So,
without misunderstanding, we will not mention $x_i$ in $A_n$ and
$B_n$ (see the proof of Proposition \ref{stronglysplits} for the
details corresponding to the case $p=2$).

We say that a cycle $\sigma$ at level $n$ {\em splits $\ell$ times} if
$\sigma $ splits, and the lifts of $ \sigma$ at level $n+1$ split and
inductively all lifts at level $n+j$ ($2\leq j < \ell$) split.
Similarly, one can imagine what we  mean if we say a cycle {\em grows $\ell$
times}. That a cycle {\em grows forever} means that it grows infinite times.

\begin{prop}[\cite{DZ}]\label{splits-p3} Let $p\geq 3$ and $n\geq 1$.
Let $\sigma$ be a splitting cycle of $f_n$. \\
 \indent {\rm  1)} If $\min \{A_n, n\} > B_n $,  every lift splits $B_n-1$
 times then all  lifts at level $n + B_n$ grow forever.\\
 \indent {\rm  2)} If $A_n\leq B_n$ and $A_n< n$,  there is one lift which behaves the same as
 $\sigma$ (i.e., this lift splits and $A_{n+1}\leq B_{n+1}$ and $A_{n+1}< {n+1}$) and other lifts split $A_n-1$ times then  all  lifts at level $n + A_n$ grow forever..\\
 \indent {\rm 3)} If $B_n \geq n$ and $A_n \geq n$, then all lifts split at least $n-1$
 times.
\end{prop}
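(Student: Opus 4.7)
The plan is to apply Lemma~\ref{anbn} with $r=1$, since each of the $p$ lifts of a splitting cycle has the same length $k$; on the lift through $x_i+p^n t$ this gives
\[
a_{n+1} \equiv a_n \ ({\rm mod}\ p^n), \qquad p\,b_{n+1} \equiv t(a_n-1) + b_n \ ({\rm mod}\ p^n),
\]
so the whole proof reduces to reading off the $p$-valuation of $t(a_n-1)+b_n$ for $t\in\{0,\dots,p-1\}$ and comparing it to $n$.

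For case~1 ($A_n>B_n$, $B_n<n$), the summand $b_n$ strictly dominates $t(a_n-1)$ for every $t$, so every lift has $B_{n+1}=B_n-1$ while $A_{n+1}\geq A_n>B_{n+1}$; the lift again satisfies the case~1 hypothesis at level $n+1$. An immediate induction gives $B$ dropping by one per level until $B=1$ at level $n+B_n-1$; at the next level one finds $B=0$ and $A\geq 1$, which is exactly the growing condition. Since this happens at level $n+B_n\geq 2$, Proposition~\ref{cycle-p>3} guarantees growth persists forever.

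For case~2 ($A_n\leq B_n$, $A_n<n$), the outcome bifurcates on $t$. If $A_n<B_n$, only $t=0$ avoids the dominant term of valuation $A_n$, so the $t=0$ lift is the special one ($B_{n+1}\geq B_n-1\geq A_n$, which puts it again in case~2), while the other $p-1$ lifts have $B_{n+1}=A_n-1<A_n=A_{n+1}$ and fall into case~1. If $A_n=B_n$, writing $a_n-1=p^{A_n}u$ and $b_n=p^{A_n}v$ with $u,v$ units modulo $p$, the unique $t\equiv -v/u\ ({\rm mod}\ p)$ produces the extra cancellation and becomes the special lift, while the remaining $p-1$ lifts behave as in case~1 with $B_{n+1}=A_n-1$. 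Feeding those non-special lifts into case~1 yields $A_n-2$ further splittings at levels $n+2,\dots,n+A_n-1$ followed by growth from level $n+A_n$ on, so counting their split at level $n+1$ they split $A_n-1$ times in total.

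For case~3 ($A_n,B_n\geq n$), both summands in $t(a_n-1)+b_n$ already have valuation $\geq n$, so $B_{n+1}\geq n-1$ on every lift and $A_{n+1}\geq n$. A one-line induction using the same linearization bound gives $A_{n+j}\geq n$ and $B_{n+j}\geq n-j$ for every descendant, and splittings persist so long as $j\leq n-1$. The main obstacle throughout is case~2: identifying the unique special lift (whose location depends on whether $A_n<B_n$ or $A_n=B_n$) and checking that the remaining $p-1$ lifts cleanly satisfy the hypothesis of case~1, with the edge cases $A_n=1$ or $B_n=1$ corresponding to ``splitting $0$ times'' followed by immediate growth at the next level.
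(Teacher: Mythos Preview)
Your proof is correct and proceeds exactly as the paper does (the paper defers to \cite{DZ} and to the parallel proof of Proposition~\ref{stronglysplits} for $p=2$): apply Lemma~\ref{anbn} with $r=1$, read off the $p$-valuation of $b_n+t(a_n-1)$, and run the case analysis on $A_n,B_n$. One small slip worth fixing: in case~1 you write $A_{n+1}\ge A_n$, but from $a_{n+1}\equiv a_n\ ({\rm mod}\ p^n)$ one only obtains $\min\{A_{n+1},n\}=\min\{A_n,n\}$ (when $A_n\ge n$ it may happen that $A_{n+1}<A_n$); this is harmless, since $\min\{A_{n+1},n+1\}\ge\min\{A_n,n\}>B_n>B_{n+1}$ is all the induction needs, and is exactly the inequality the paper's $p=2$ argument invokes.
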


\begin{prop}[\cite{DZ}]\label{partiallysplits}
Let $p\geq 3$ and $n\geq 1$. Let $\sigma$ be a partially splitting
$k$-cycle of $f_n$  and  $\tilde{\sigma}$ be a lift of $\sigma$ of
length $kd$, where $d$ is the order of
 $a_n$ in $\mathbb{Z}/p\mathbb{Z}$. \\
 \indent {\rm 1)} If $A_{n+1} < nd $, then $\tilde{\sigma}$ splits $A_{n+1}-1$
 times then all  lifts at level $n+A_{n+1}$  grow forever.\\
 \indent {\rm 2)} If $A_{n+1} \geq nd $, then $\tilde{\sigma}$ splits at least $nd-1$
 times.
\end{prop}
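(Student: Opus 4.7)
The goal is to determine the behaviour at level $n+1$ of the lift $\tilde\sigma$ (of length $kd$). Since $\tilde\sigma$ corresponds to $t$-values lying on a genuine $d$-cycle of $\Phi$ (not its fixed point), Lemma \ref{anbn} combined with the geometric-sum identity $1 + a_n + \cdots + a_n^{d-1} = (a_n^d - 1)/(a_n - 1)$ --- which holds because $a_n \not\equiv 1 \ ({\rm mod}\ p)$ --- gives
$$
a_{n+1}(x_i + p^n t) \equiv a_n^d \ ({\rm mod}\ p^n), \qquad
p\, b_{n+1}(x_i + p^n t) \equiv (a_n^d - 1)\Bigl(t + \frac{b_n}{a_n-1}\Bigr) \ ({\rm mod}\ p^n),
$$
and $u := t + b_n/(a_n-1)$ is a $p$-adic unit, since $\tilde\sigma$ is not the fixed-point lift of $\Phi$.

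The core of the argument is the coupling $B_{n+1} = A_{n+1} - 1$ on $\tilde\sigma$. In the range $A_{n+1} < n$ this is immediate from the displayed formulas: one has $A_{n+1} = v_p(a_n^d - 1)$ exactly and $v_p(p\, b_{n+1}) = v_p(a_n^d - 1)$, hence $B_{n+1} = A_{n+1} - 1$. Case~1) in this subrange then follows by applying Proposition \ref{splits-p3}.1) at level $n+1$: since $\min\{A_{n+1}, n+1\} = A_{n+1} > B_{n+1}$, the cycle $\tilde\sigma$ together with its lifts undergoes $B_{n+1} = A_{n+1} - 1$ splits and then grows forever starting at level $(n+1) + B_{n+1} = n + A_{n+1}$. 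The boundary case $A_{n+1} = 1$ (so $B_{n+1} = 0$) means $\tilde\sigma$ itself grows, and Proposition \ref{cycle-p>3} ensures it continues to grow forever.

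To cover the remaining range $n \le A_{n+1} < nd$ of Case~1), and all of Case~2) ($A_{n+1} \ge nd$), one needs more precision than Lemma \ref{anbn} directly affords, since its congruences are only valid modulo $p^n$. My plan is to refine the Taylor expansion of $g^d = f^{kd}$ at $x_i$ by iterating the first-order linearisation $\Phi$ and tracking the $p^{2n}$-corrections coming from $g''(x_i)$ (and, where needed, the further Taylor terms). Such a refinement should yield $B_{n+1} = A_{n+1} - 1$ throughout $A_{n+1} < nd$, completing Case~1) via Proposition \ref{splits-p3}.1); and $B_{n+1} \ge nd - 1$ in the regime $A_{n+1} \ge nd$, from which Proposition \ref{splits-p3}.3), applied iteratively at levels $n+1, n+2, \ldots$ and recovering a large $A$--$B$ pair after each split, accumulates the required $nd-1$ splits of Case~2).

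The principal obstacle is precisely this higher-order bookkeeping. The threshold $nd$ arises naturally because $d$ successive iterations of the linearisation accumulate the factor $(a_n^d - 1)/(a_n - 1)$, whose $p$-valuation can be significantly boosted beyond the one-step estimate by the non-linear corrections in the Taylor expansion of $g$. Quantifying this boost demands composing the first-order expansion with itself $d$ times, controlling the cross-terms produced by $g''(x_i)$, and verifying that the unit factor $u$ is not eaten up at any intermediate stage; this is where the bulk of the technical effort will lie.
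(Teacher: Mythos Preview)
The paper does not give its own proof of this proposition; it simply refers to \cite{DZ}. So there is no in-paper argument to compare against, and I will assess your plan on its own terms.

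Your treatment of the subrange $A_{n+1}<n$ is correct. Lemma~\ref{anbn} with $r=d$ yields $p\,b_{n+1}\equiv (a_n^d-1)\,u \pmod{p^n}$ with $u=t+b_n/(a_n-1)$ a unit on the $kd$-lift, so $B_{n+1}=A_{n+1}-1$ exactly, and Proposition~\ref{splits-p3}~1) (or Proposition~\ref{cycle-p>3} when $A_{n+1}=1$) then gives precisely the claimed $A_{n+1}-1$ splits followed by growth at level $n+A_{n+1}$.

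The genuine gap is in the regime $A_{n+1}\ge n$. The specific remedy you propose --- carrying the Taylor expansion of $g^d$ one order further so as to track the $p^{2n}$-corrections from $g''(x_i)$ --- will upgrade the congruences of Lemma~\ref{anbn} from ${\rm mod}\ p^{\,n}$ to ${\rm mod}\ p^{\,2n}$, and hence extend your argument to $A_{n+1}<2n$; but the proposition asks for the full range $A_{n+1}<nd$, and for $d\ge 3$ a second-order expansion does not reach that far. Pushing one Taylor order at a time gains one factor of $n$ per step, so a uniform argument for all $d$ needs a different organizing idea than ``compose the linearization and watch the cross-terms.'' Likewise, your plan for Case~2) to invoke Proposition~\ref{splits-p3}~3) ``iteratively'' is underspecified: that case guarantees roughly $n$ splits per invocation, and you must then verify that after each block of splits the pair $(A,B)$ is still large relative to the new level --- this is exactly the bookkeeping you have deferred, and nothing you have written forces it. A cleaner organizing principle, and plausibly what \cite{DZ} does, is to expand $G:=g^d$ about the genuine $g$-fixed point $z^*\in x_i+p^n\mathbb{Z}_p$ furnished by the partial-splitting analysis (situation~(S4)) rather than about $x_i$: writing $g(z^*+w)=z^*+\phi(w)$ with $\phi(0)=0$ and $\alpha:=\phi'(0)=g'(z^*)$, the recursion for the Taylor coefficients of $\phi^d$ shows for instance that $G''(z^*)/2=\tfrac{g''(z^*)}{2}\,\alpha^{d-1}\dfrac{\alpha^d-1}{\alpha-1}$, and it is this built-in divisibility by $(\alpha^d-1)/(\alpha-1)$ in the higher coefficients --- together with $v_p(y-z^*)=n$ on $\tilde\sigma$ --- that produces the threshold $nd$ and underlies the remark (attributed to \cite{DZ}, Corollary~3) that $\min\{A_{n+1},nd\}$ is independent of the point on $\tilde\sigma$.
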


We remark that in the partially splitting case, $\min \{ A_{n+1}(x),
nd\}$ depends only on the lifting cycle of $f_{n+1}$ of length $kd$
but not on $x$ (see \cite{DZ}, Corollary 3).
\medskip

\setcounter{equation}{0}

\section{Induced dynamics on $\mathbb{Z}/p^n \mathbb{Z}_p$ ($p= 2$)
}\label{top-2-adic}
 In this section we
focus on the special case $p=2$ which is not considered in
\cite{DZ}. The first part in the preceding  section (where $p\ge 3 $
is not explicitly assumed) remains true for $p=2$. Notice that when
$p=2$, there is no partially splitting cycle.

We only need to study how a cycle grow or split.
 We distinguish four cases. Let $\sigma$ be a cycle of $f_n$.
We say $\sigma$ {\it strongly grows} if $a_n \equiv 1 \ ({\rm mod} \
4)$ and $b_n \equiv 1 \ ({\rm mod} \ 2)$,  and $\sigma$ {\it weakly
grows} if $a_n \equiv 3 \ ({\rm mod} \ 4)$ and $b_n \equiv 1 \ ({\rm
mod} \ 2)$. We say $\sigma$ {\it strongly splits} if $a_n \equiv 1 \
({\rm mod} \ 4)$ and $b_n \equiv 0 \ ({\rm mod} \ 2)$, and $\sigma$
{\it weakly splits} if $a_n \equiv 3 \ ({\rm mod} \ 4)$ and $b_n
\equiv 0 \ ({\rm mod} \ 2)$.

The following results hold true when $p=2$. Their proofs are
postponed and got together at the end of this section.

\begin{prop}\label{grows}
Let $\sigma$ be a cycle of $f_n$ ($n\geq 2$). If $\sigma$ strongly
grows then the lift of ${\sigma}$ strongly grows. If $\sigma$ weakly
grows then the lift of ${\sigma}$ strongly splits.
\end{prop}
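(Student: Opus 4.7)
The plan is to apply Lemma \ref{anbn} with $p=2$ and $r=2$: since a growing cycle $\sigma$ has a unique lift $\tilde\sigma$ of length $2k$, formulas (\ref{an}) and (\ref{bn}) directly determine $a_{n+1}\pmod 4$ and $b_{n+1}\pmod 2$, which is exactly what the classification into strong/weak grow/split requires. Throughout, Lemma \ref{lem1} ensures that $a_n\bmod 4$ and $b_n\bmod 2$ are cycle-invariants, so the choice of representative $x_i$ does not matter.

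First I would dispose of $a_{n+1}$. Formula (\ref{an}) gives
\[
a_{n+1}(x_i+2^n t)\equiv a_n(x_i)^2\pmod{2^n}.
\]
In both subcases of a growing cycle one has $a_n\equiv\pm 1\pmod 4$, whence $a_n^2\equiv 1\pmod 8$; since $n\geq 2$ implies $2^n\geq 4$, this forces $a_{n+1}\equiv 1\pmod 4$ in either case. Consequently the lift is automatically of ``strong'' type, and the only remaining question is whether $b_{n+1}$ is odd (grow) or even (split).

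I would then read $b_{n+1}\bmod 2$ from (\ref{bn}):
\[
2\,b_{n+1}(x_i+2^n t)\equiv t\bigl(a_n^2-1\bigr)+b_n(1+a_n)\pmod{2^n}.
\]
Since $a_n^2-1\equiv 0\pmod 8$, the term $t(a_n^2-1)$ contributes nothing modulo $4$, so the parity of $b_{n+1}$ is decided by $b_n(1+a_n)\bmod 4$. In the strongly growing case $a_n\equiv 1\pmod 4$ gives $1+a_n\equiv 2\pmod 4$; combined with $b_n$ odd this yields $b_n(1+a_n)\equiv 2\pmod 4$, so $b_{n+1}$ is odd and $\tilde\sigma$ strongly grows. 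In the weakly growing case $a_n\equiv 3\pmod 4$ gives $1+a_n\equiv 0\pmod 4$, hence $b_n(1+a_n)\equiv 0\pmod 4$, whence $b_{n+1}$ is even and $\tilde\sigma$ strongly splits.

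There is no real obstacle here: the argument is elementary arithmetic modulo $4$ and $8$. The only point worth flagging is the role of the hypothesis $n\geq 2$, which is used twice --- first to transfer the mod-$8$ information on $a_n^2$ into mod-$4$ information on $a_{n+1}$, and second to guarantee that formula (\ref{bn}) carries at least mod-$4$ information so that $b_{n+1}\bmod 2$ can be recovered after dividing by $2$. The excluded case $n=1$ would require a separate analysis, consistent with the restriction in the statement.
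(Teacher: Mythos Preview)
Your argument is correct and follows essentially the same route as the paper: the paper first packages the specialization of Lemma~\ref{anbn} to $p=2$, $r=2$ as a separate lemma (Lemma~\ref{anbn-2}, yielding $a_{n+1}\equiv 1\pmod 4$ and $2b_{n+1}\equiv b_n(1+a_n)\pmod 4$), and then reads off the two cases exactly as you do. You simply bypass that intermediate statement and invoke Lemma~\ref{anbn} directly, but the arithmetic is identical.
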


The first assertion of Proposition \ref{grows}  implies that if
$\sigma=(x_1, \cdots,x_k)$ is a strongly growing cycle of $f_n$
($n\ge 2$), then $\bigsqcup (x_i+ p^n \mathbb{Z}_p)$ is a minimal
set.

Recall that $$ A_n(x)= v_2(a_n(x)-1), \quad B_n(x)= v_2(b_n(x)).
$$
In the following proposition, the $x$ in $A_n(x), B_n(x)$ can be
chosen any $x_i$ of the cycle $\sigma=(x_1, \cdots,x_k)$ (see its
proof).
\begin{prop}\label{stronglysplits}
Let $\sigma$ be a strongly splitting cycle of $f_n$ ($n\geq 2$). \\
 \indent {\rm  1)} If $\min \{A_n, n\} > B_n $, then all lifts strongly split $B_n-1$
 times, then all the lifts at level $n+B_n$ strongly grow.\\
 \indent {\rm  2)} If $A_n\leq B_n$ and $A_n< n$, then one lift behaves the same as
 $\sigma$ (i.e., this lift strongly splits and $A_{n+1}\leq B_{n+1}$ and $A_{n+1}< {n+1}$).
 The other one splits $A_n-1$ times, then all the lifts lifts at level $n+A_n$ strongly grow forever. \\
 \indent {\rm  3)} If $B_n \geq n$ and $A_n \geq n$, then all lifts strongly split at least $n-1$
 times.
\end{prop}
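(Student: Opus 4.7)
The plan is to apply Lemma \ref{anbn} with $r=1$ (all lifts of a splitting cycle have length $k$), yielding for $t\in\{0,1\}$ the two congruences
\[
a_{n+1}(x_i+2^n t)\equiv a_n(x_i)\ ({\rm mod}\ 2^n),\qquad 2\,b_{n+1}(x_i+2^n t)\equiv b_n(x_i)+t\bigl(a_n(x_i)-1\bigr)\ ({\rm mod}\ 2^n),
\]
and then reading off $A_{n+1}=v_2(a_{n+1}-1)$ and $B_{n+1}=v_2(b_{n+1})$ one level at a time. Lemma \ref{lem1} guarantees these values are well-defined on each cycle up to the thresholds we need, so I suppress the $x_i$-dependence. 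Since $a_n\equiv 1\ ({\rm mod}\ 4)$ and $n\ge 2$, the congruence $a_{n+1}\equiv a_n\ ({\rm mod}\ 2^n)$ preserves $a_{n+1}\equiv 1\ ({\rm mod}\ 4)$ at every subsequent step, so no descendant cycle can leave the ``strong'' regime while it still splits. Once any descendant lift strongly grows, Proposition \ref{grows} makes strong growth persist forever, handling every ``strongly grow'' clause in the statement.

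For part 1), where $A_n>B_n$ and $B_n<n$, one has $v_2(t(a_n-1))\ge A_n>B_n=v_2(b_n)$, so $v_2(b_n+t(a_n-1))=B_n<n$ for either $t\in\{0,1\}$. Therefore $B_{n+1}=B_n-1$ on both lifts, while $A_{n+1}\ge\min(A_n,n)>B_n=B_{n+1}+1$. Both lifts thus re-enter the hypothesis of part 1) as long as $B_{n+1}\ge 1$. A straightforward induction on $j$ yields at level $n+j$ (for $0\le j\le B_n-1$) a descendant that strongly splits with $B_{n+j}=B_n-j$; at level $n+B_n$ one has $B=0$ (so $b$ is odd) while $A\ge 2$ persists, which is exactly strong growth.

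For part 2), where $A_n\le B_n$ and $A_n<n$, both lifts have $A_{n+1}=A_n$ (from $a_{n+1}\equiv a_n\ ({\rm mod}\ 2^n)$ and $A_n<n$), so what distinguishes them is $B_{n+1}$. I split on whether $A_n<B_n$ or $A_n=B_n$. If $A_n<B_n$: the $t=0$ lift has $B_{n+1}=B_n-1\ge A_n=A_{n+1}$ (so it inherits part 2)), while the $t=1$ lift has $v_2(b_n+(a_n-1))=A_n$ with no cancellation, giving $B_{n+1}=A_n-1<A_{n+1}$ (part 1)). If $A_n=B_n$ the roles swap: the $t=0$ lift lies in part 1) with $B_{n+1}=A_n-1$, while for the $t=1$ lift the identity $b_n+(a_n-1)=2^{A_n}(u+v)$ with $u,v$ odd forces $v_2\ge A_n+1$, hence $B_{n+1}\ge A_n=A_{n+1}$ and that lift stays in part 2). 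In either subcase the part 1) branch is a lift at level $n+1$ with $B_{n+1}=A_n-1$; this lift strongly splits once and then, applying part 1) to it, undergoes $B_{n+1}-1=A_n-2$ further splittings of its descendants before reaching strong growth at level $(n+1)+B_{n+1}=n+A_n$, for a total of $A_n-1$ splittings on that branch.

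For part 3), where $A_n\ge n$ and $B_n\ge n$, the congruence yields $v_2(b_n+t(a_n-1))\ge n$, so $B_{n+1}\ge n-1$, while $A_{n+1}\ge\min(A_n,n)=n$. Iterating the argument gives $A_{n+j}\ge n$ and $B_{n+j}\ge n-j$ by induction on $j$, so descendants keep strongly splitting as long as $n-j\ge 1$, producing at least $n-1$ strong splittings of the lifts. The main obstacle I anticipate lies in part 2): one must correctly identify, in each of the two subcases, which of the two lifts carries the part 2) hypothesis forward; use the odd-plus-odd cancellation precisely in the equality case; and perform the off-by-one bookkeeping so that the initial split of the ``other'' lift plus the $A_n-2$ further splits supplied by part 1) assemble into the stated $A_n-1$.
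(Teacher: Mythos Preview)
Your proof is correct and follows essentially the same approach as the paper: both rely on Lemma~\ref{anbn} with $r=1$ (equivalently Lemmas~\ref{ana_n+1} and~\ref{new-anbn-2}) to track $A_{n+1}$ and $B_{n+1}$, and both observe at the outset that $a_\ell\equiv 1\ ({\rm mod}\ 4)$ persists so every descendant stays in the strong regime. The only stylistic difference is in part~2): the paper argues uniformly that exactly one $t\in\{0,1\}$ satisfies $b_n+t(a_n-1)\equiv 0\ ({\rm mod}\ 2^{A_n+1})$, whereas you split into the subcases $A_n<B_n$ and $A_n=B_n$ and identify the relevant $t$ explicitly in each---equivalent reasoning with a bit more bookkeeping. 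One small imprecision: in your subcase $A_n<B_n$, the equality $B_{n+1}=B_n-1$ for $t=0$ need not hold literally when $B_n\ge n$, but the conclusion $B_{n+1}\ge A_n=A_{n+1}$ that you actually use is still valid since $A_n<n$.
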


\begin{prop}\label{weaklysplits}
Let $\sigma$ be a weakly splitting cycle of $f_n$ ($n\geq 2$). Then
one lift behaves the same as
 $\sigma$ and the other one weakly grows and then strongly splits.
\end{prop}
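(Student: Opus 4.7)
The proof strategy is a direct case analysis built on Lemma~\ref{anbn}. Since $\sigma$ is splitting and $p=2$, it admits exactly two lifts $\tilde{\sigma}_0, \tilde{\sigma}_1$, each of length $k$, corresponding to $t=0$ and $t=1$ in Lemma~\ref{anbn} (applied with $r=1$). The plan is to read off $(a_{n+1}, b_{n+1}) \pmod 4$ on each lift to identify its type, and then invoke Proposition~\ref{grows} for the final assertion about the weakly growing lift.

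With $r=1$, Lemma~\ref{anbn} gives
\[
a_{n+1}(x_i+2^n t)\equiv a_n(x_i) \pmod{2^n}, \qquad 2b_{n+1}(x_i+2^n t)\equiv t(a_n(x_i)-1)+b_n(x_i) \pmod{2^n}.
\]
Because $n\ge 2$, the first congruence already forces $a_{n+1}\equiv a_n\equiv 3\pmod 4$ on both lifts, so each lift is of the ``weak'' type and only the parity of $b_{n+1}$ is in doubt. Using $a_n(x_i)-1\equiv 2\pmod 4$ together with the hypothesis that $b_n(x_i)$ is even, I would split into two subcases according to $b_n(x_i)\pmod 4$: if $b_n(x_i)\equiv 0\pmod 4$, a direct mod-$4$ reduction shows that $\tilde{\sigma}_0$ has $b_{n+1}$ even (weakly splits) while $\tilde{\sigma}_1$ has $b_{n+1}$ odd (weakly grows); if $b_n(x_i)\equiv 2\pmod 4$, the roles are simply swapped. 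Either way, exactly one lift weakly splits (behaving as $\sigma$) and the other weakly grows.

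The final claim then follows by applying Proposition~\ref{grows} at level $n+1\ge 2$ to the weakly growing lift: its unique lift strongly splits, which is precisely the ``and then strongly splits'' statement. The only point worth a brief remark is that the assignment of types to the two lifts is intrinsic, i.e.\ independent of the chosen representative $x_i$; this is guaranteed by Lemma~\ref{lem1}(iv) applied at level $n+1$, where $A=v_2(a_{n+1}-1)=1$ ensures constancy of the parity of $b_{n+1}$ along each lifted cycle of $f_{n+1}$. Beyond this bookkeeping, there is no genuine obstacle: the argument reduces to a careful reading modulo $4$ of the lifting formulas, plus one invocation of Proposition~\ref{grows}.
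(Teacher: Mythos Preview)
Your proposal is correct and follows essentially the same route as the paper. The paper also observes $a_{n+1}\equiv a_n\equiv 3\pmod 4$ and then, rather than splitting into the two subcases $b_n\equiv 0$ or $2\pmod 4$, phrases the dichotomy via $A_n=1\le B_n$ and invokes the Case~2 computation from the proof of Proposition~\ref{stronglysplits} to get $B_{n+1}\ge 1$ on one lift and $B_{n+1}=0$ on the other; this is exactly your mod-$4$ reading of the formula $2b_{n+1}\equiv t(a_n-1)+b_n$, and both finish by applying Proposition~\ref{grows}.
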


\bigskip

To prove these propositions, we need  the following lemmas.

\begin{lem}\label{anbn-2}
Let $\sigma$ be a growing cycle of $f_n$ ($n\geq 2$). Then
\begin{eqnarray}\label{an-2}
a_{n+1}(x_i) \equiv 1 \quad ({\rm mod} \ 4),
\end{eqnarray}
\begin{eqnarray}\label{bn-2}
2b_{n+1}(x_i+p^n t) \equiv  b_n(x_i) (1+a_n(x_i)) \quad ({\rm mod} \
4).
\end{eqnarray}
\end{lem}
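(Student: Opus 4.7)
The plan is to read Lemma \ref{anbn-2} off from Lemma \ref{anbn} by plugging in the specific data of the growing case. A growing cycle at level $n$ satisfies $a_n(x_i) \equiv 1 \pmod{2}$ and $b_n(x_i) \equiv 1 \pmod{2}$, and by the classification (a) its unique lift has length $rk = 2k$, so $r = 2$ in Lemma \ref{anbn}. Substituting $p = 2$ and $r = 2$ into (\ref{an}) and (\ref{bn}), I obtain
\begin{align*}
a_{n+1}(x_i + 2^n t) &\equiv a_n(x_i)^2 \pmod{2^n}, \\
2 b_{n+1}(x_i + 2^n t) &\equiv t\bigl(a_n(x_i)^2 - 1\bigr) + b_n(x_i)\bigl(1 + a_n(x_i)\bigr) \pmod{2^n}.
\end{align*}
Since $n \geq 2$, both congruences remain valid when the modulus is reduced to $4$.

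For (\ref{an-2}) I would invoke the elementary fact that the square of any odd $2$-adic integer is congruent to $1$ modulo $8$. Since $a_n(x_i)$ is odd by the growing hypothesis, this gives $a_n(x_i)^2 \equiv 1 \pmod{8}$, and hence $a_{n+1}(x_i) \equiv 1 \pmod{4}$, which is (\ref{an-2}).

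For (\ref{bn-2}), the same observation gives $a_n(x_i)^2 - 1 \equiv 0 \pmod{8}$, so the term $t\bigl(a_n(x_i)^2 - 1\bigr)$ vanishes modulo $4$. The second displayed congruence then collapses to $2 b_{n+1}(x_i + 2^n t) \equiv b_n(x_i)\bigl(1 + a_n(x_i)\bigr) \pmod{4}$, which is (\ref{bn-2}). I do not anticipate any real obstacle: the lemma is essentially a mod-$4$ refinement of Lemma \ref{anbn} that exploits the parity of $a_n(x_i)$, and the hypothesis $n \geq 2$ is used precisely so that the mod-$2^n$ conclusions of Lemma \ref{anbn} survive reduction modulo $4$.
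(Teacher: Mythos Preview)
Your proof is correct and follows essentially the same route as the paper: specialize Lemma~\ref{anbn} to $p=2$, $r=2$, reduce modulo $4$ using $n\ge 2$, and then use that $a_n(x_i)$ is odd to kill the extra term. The only difference is cosmetic---you invoke the fact that odd squares are $1\bmod 8$ explicitly, whereas the paper leaves this implicit.
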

\begin{proof} Taking $p=2$ and $r=2$ in (\ref{an}),  we get
\[a_{n+1}(x_i) \equiv a_n^2(x_i)  \quad ({\rm mod} \ 2^n). \]
Since $n\geq 2$ and $a_n \equiv 1 \ ({\rm mod} \ 2)$, we obtain
(\ref{an-2}).

Taking $p=2$ and $r=2$ in (\ref{bn}), we get
\[2b_{n+1}(x_i+2^n t) \equiv t(a_n(x_i)^2-1) + b_n(x_i) (1+a_n(x_i)) \quad ({\rm mod} \ 2^n).\]
Since $n\geq 2$ and $a_n(x_i) \equiv 1 \ ({\rm mod} \ 2)$, we obtain
(\ref{bn-2}). \end{proof}

\begin{lem}\label{ana_n+1}
Let $\sigma$ be a splitting cycle of $f_n$.
 If $A_n <n$, then $A_{n+1}=A_n$ and if $A_n \geq n$, then $A_{n+1}\geq
 n$. Consequently, \begin{eqnarray}\label{A_nA_n+1}
\min \{A_{n+1},n\}=\min \{A_n, n\}.
 \end{eqnarray}
\end{lem}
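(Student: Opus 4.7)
The plan is to apply Lemma \ref{anbn} in the very simple case $r=1$. In the splitting scenario analyzed before Lemma \ref{anbn-2}, the cycle $\sigma$ of length $k$ lifts into $p=2$ cycles, each still of length $k$; so for any lift $\tilde{\sigma}$ the multiplier is $r=1$. Substituting $p=2$ and $r=1$ into formula (\ref{an}) gives
\[
a_{n+1}(x_i+2^n t)\equiv a_n(x_i)\pmod{2^n},
\]
which is the only ingredient I will need.

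With this congruence in hand, I split into the two cases stated in the lemma. If $A_n<n$, write $a_n-1=2^{A_n} u$ with $u$ a $2$-adic unit, and $a_{n+1}-a_n=2^n c$ for some $c\in\mathbb{Z}_2$. Then
\[
a_{n+1}-1 \;=\; 2^{A_n}u + 2^n c \;=\; 2^{A_n}\bigl(u+2^{\,n-A_n}c\bigr),
\]
and since $n-A_n\ge 1$ the factor in parentheses is still a unit; hence $v_2(a_{n+1}-1)=A_n$, i.e.\ $A_{n+1}=A_n$. If $A_n\ge n$, then $a_n\equiv 1\pmod{2^n}$, so directly $a_{n+1}\equiv 1\pmod{2^n}$ and therefore $A_{n+1}\ge n$.

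The consequence $\min\{A_{n+1},n\}=\min\{A_n,n\}$ follows at once: in the first case both sides equal $A_n$, and in the second case both sides equal $n$.

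I don't foresee a genuine obstacle: the whole lemma is essentially the observation that, because splitting lifts keep the cycle length the same, the $r=1$ version of (\ref{an}) transports information about $a_n-1$ modulo $2^n$ unchanged to the next level. The only point that requires a brief check is that the quantities $A_n$ and $A_{n+1}$ are well defined on the cycle $\sigma$ and on its lift respectively; this follows from Lemma \ref{lem1}(i)-(ii), which guarantees that $a_n$ is constant modulo $2^n$ on $\sigma$ and $a_{n+1}$ is constant modulo $2^{n+1}$ on $\tilde\sigma$, so that the truncated valuations $\min\{A_n,n\}$ and $\min\{A_{n+1},n+1\}$ are intrinsic to the cycles.
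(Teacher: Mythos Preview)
Your proof is correct and follows the same approach as the paper: the paper's proof is the single sentence ``We need only to notice that we have $a_{n+1}\equiv a_n \pmod{2^n}$ when $\sigma$ splits,'' which is exactly your application of (\ref{an}) with $r=1$. You have simply spelled out the valuation case analysis that the paper leaves implicit.
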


\begin{proof} We need only to notice that we have $a_{n+1}\equiv a_n \ ({\rm
mod} \ 2^n)$ when $\sigma$ splits. \end{proof}

\begin{lem}\label{new-anbn-2}
Let $\sigma=(x_1,\dots, x_k)$ be a splitting cycle of $f_n$. Then
for $1\leq i \leq k$ and for $ t=0$ or $1$, we have
\begin{eqnarray}\label{b_n+1}
2 b_{n+1}(x_i+2^n t) \equiv b_{n}(x_i) + t (a_n(x_i)-1)  \quad ({\rm
mod} \ 2^{n}).
\end{eqnarray}
Consequently, we have
\begin{eqnarray}\label{B_n+1}
  B_{n+1}(x_i+2^nt)= B_{n}(x_i)-1 \qquad {\rm{if}} \ B_{n}(x_i)<
  \min \{A_n(x_i), \ n\},
\end{eqnarray}
\end{lem}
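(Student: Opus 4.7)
The plan is to derive the congruence (\ref{b_n+1}) as a direct specialization of Lemma \ref{anbn} and then obtain (\ref{B_n+1}) by comparing $2$-adic valuations term by term on the right hand side.

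First, since $\sigma$ is a splitting cycle at level $n$ with $p=2$, the case (b) of the classification applies: each of the two lifts of $\sigma$ has the same length $k$ as $\sigma$. In the notation of Lemma \ref{anbn} this corresponds to the length multiplier $r = 1$, so that the geometric sum $1 + a_n + \cdots + a_n^{r-1}$ collapses to $1$ and $a_n^r - 1$ becomes $a_n - 1$. Substituting $p = 2$ and $r = 1$ into (\ref{bn}) therefore yields exactly (\ref{b_n+1}).

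For the consequence (\ref{B_n+1}), I would work with the right hand side of (\ref{b_n+1}). Under the hypothesis $B_n(x_i) < \min\{A_n(x_i), n\}$, the two summands $b_n(x_i)$ and $t(a_n(x_i) - 1)$ have $2$-adic valuations $B_n(x_i)$ and at least $A_n(x_i) > B_n(x_i)$ respectively (with the convention $v_2(0) = +\infty$ handling $t = 0$). These valuations are strictly distinct, so the two terms cannot cancel, and the whole right hand side has $2$-valuation exactly $B_n(x_i)$. Since $B_n(x_i) < n$, this valuation is preserved upon reducing modulo $2^n$, so the congruence (\ref{b_n+1}) forces $v_2(2 b_{n+1}(x_i + 2^n t)) = B_n(x_i)$, i.e., $B_{n+1}(x_i + 2^n t) = B_n(x_i) - 1$.

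The only subtle point is the non-cancellation of the two summands on the right hand side when $t = 1$, which is guaranteed precisely by the strict inequality $B_n < A_n$ built into the hypothesis. Apart from this, the argument is routine valuation bookkeeping, with the cutoff at $n$ in $\min\{A_n, n\}$ ensuring that working modulo $2^n$ loses no information about the valuations at play.
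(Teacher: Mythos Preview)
Your proof is correct and follows the same route as the paper: the congruence (\ref{b_n+1}) is obtained by specializing Lemma \ref{anbn} with $p=2$, $r=1$, exactly as the paper does. Your derivation of (\ref{B_n+1}) via valuation bookkeeping is spelled out in more detail than the paper's one-line ``we obtain the result,'' but the argument is the intended one.
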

\begin{proof} Since $\sigma $ splits, taking $p=2$ and $r=1$ in (\ref{bn}),
we obtain the result. \end{proof}

%\begin{lem}
%Let $\sigma=(x_1,\dots, x_k)$ be a splitting cycle of $f_n$. Then
%for $1\leq i \leq k$
%\begin{eqnarray}
%  B_{n+1}(x_i+2^nt)= A_{n}(x_i)-1 \qquad {\rm{if}} \ A_{n}(x_i)<
%  \min \{B_n(x_i), \ n\},
%\end{eqnarray}
%\end{lem}

\medskip

The following lemma concerns an elementary property of polynomials
on $\mathbb{Z}_2 $.
\begin{lem}\label{polyn}
Let $h \in \mathbb{Z}_2[x]$.  If $a\equiv b \ ({\rm mod} \ 2) $,
then $h'(a)\equiv h'(b) \ ({\rm mod} \ 4)$. Furthermore, if $h'(a)
\equiv 1 \ ({\rm mod} \ 2)  $, then $h'(a)h'(b) \equiv 1 \ ({\rm
mod} \ 4)$ .
\end{lem}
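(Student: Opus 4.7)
The plan is to express $h'(a) - h'(b)$ as a product of two explicitly even factors in $\mathbb{Z}_2$. Writing $h(x) = \sum_i c_i x^i$, I would apply the standard factorization $a^{i-1} - b^{i-1} = (a-b)\sum_{j=0}^{i-2} a^j b^{i-2-j}$ term by term to obtain
\[
h'(a) - h'(b) \;=\; (a-b)\,S, \qquad S := \sum_{i\ge 1} i c_i \sum_{j=0}^{i-2} a^j b^{i-2-j} \in \mathbb{Z}_2.
\]
The hypothesis $a \equiv b \pmod{2}$ supplies one factor of $2$ via $a-b$, so the task reduces to showing $S \in 2\mathbb{Z}_2$.

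For this I would reduce $S$ modulo $2$. Because $a \equiv b \pmod 2$, each of the $i-1$ summands $a^j b^{i-2-j}$ in the inner sum is congruent to $a^{i-2}$ mod $2$, so the inner sum collapses to $(i-1) a^{i-2} \pmod 2$. Consequently $S \equiv \sum_i i(i-1) c_i a^{i-2} \pmod 2$, which vanishes since the product of two consecutive integers $i(i-1)$ is always even. This is the only non-routine step: the extra factor of $2$ needed to upgrade the congruence from mod $2$ to mod $4$ is hidden in the combinatorial coefficient $i(i-1)$ produced by differentiating $h$.

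For the second assertion, the first part gives $h'(b) \equiv h'(a) \pmod 4$, so $h'(a) h'(b) \equiv h'(a)^2 \pmod 4$. If moreover $h'(a)$ is odd, writing $h'(a) = 1 + 2s$ with $s \in \mathbb{Z}_2$ yields $h'(a)^2 = 1 + 4s(s+1) \equiv 1 \pmod 4$ (in fact mod $8$, since $s(s+1)$ is even), completing the argument. I do not anticipate a serious obstacle; the lemma is essentially a bookkeeping exercise once one notices the ``hidden'' evenness of $i(i-1)$.
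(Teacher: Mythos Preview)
Your argument is correct. Factoring $h'(a)-h'(b)=(a-b)S$ and then reducing $S$ modulo $2$ to obtain $\sum_i i(i-1)c_i a^{i-2}$ is a clean, uniform way to extract the second factor of $2$; the deduction of the ``furthermore'' clause from the first part is also fine.

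The paper's proof rests on a slightly different (and terser) observation: the coefficient of $x^{2k+1}$ in $h'(x)$ is $(2k+2)c_{2k+2}\equiv 0\pmod 2$, so modulo $2$ the polynomial $h'$ involves only even powers of $x$. One then splits $h'(a)-h'(b)$ by parity of the exponent: for even exponents $a^{2m}-b^{2m}=(a^m-b^m)(a^m+b^m)$ is already divisible by $4$ when $a\equiv b\pmod 2$, while for odd exponents the even coefficient supplies the missing factor of $2$. Your route avoids this case split by packaging the same divisibility into the single combinatorial fact that $i(i-1)$ is even (equivalently, that all coefficients of $h''$ lie in $2\mathbb{Z}_2$). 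Both arguments exploit the same hidden factor of $2$ coming from differentiation; yours is more explicit and uniform, the paper's is a one-line hint that leaves the parity bookkeeping to the reader.
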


\begin{proof}
It suffices to notice that the coefficient of $x^{2k+1}$ in $h'(x) $
is equal to $0 \ ({\rm mod} \ 2)$.
\end{proof}
\medskip

\begin{lem}\label{after-growth}
Let $\sigma $ be a growing $k$-cycle of $f_n$ ($n\geq 1$). Then its
lift strongly grows or strongly splits.
\end{lem}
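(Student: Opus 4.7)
The plan is to reduce the lemma to the single congruence $a_{n+1} \equiv 1 \pmod 4$ on the (unique) lift $\tilde\sigma$. Since $\sigma$ grows, the classification in Section~\ref{preliminary} shows that $\tilde\sigma$ has length $2k$, so we are in the case $r = 2$ in the notation of Lemma~\ref{anbn}. Once $a_{n+1} \equiv 1 \pmod 4$ is established, the definitions recorded just before Proposition~\ref{grows} immediately force $\tilde\sigma$ to \emph{strongly} grow or \emph{strongly} split, with the dichotomy decided solely by the parity of $b_{n+1}(x_i+2^n t)$. So no direct analysis of $b_{n+1}$ is needed at this stage.

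For $n \geq 2$ the desired congruence is precisely (\ref{an-2}) of Lemma~\ref{anbn-2}, so nothing more is required in this range.

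The delicate point is the boundary case $n = 1$, which is not covered by Lemma~\ref{anbn-2}; indeed, formula (\ref{an}) with $p=2$, $n=1$, $r=2$ only yields $a_2 \equiv a_1^2 \pmod 2$, which is too weak. Here I would instead compute $a_2(y)$ directly from its definition, writing $a_2(y) = (g^2)'(y) = g'(y)\,g'(g(y))$ where $g = f^k$. The lift lies in the set $\{x_i, x_i + 2\}$, so $y \equiv x_i \pmod 2$; and because $x_i$ is a fixed point of $g_1$, also $g(y) \equiv g(x_i) \equiv x_i \pmod 2$. In particular $y \equiv g(y) \pmod 2$. Since $\sigma$ grows, $g'(x_i) = a_1(x_i) \equiv 1 \pmod 2$ is odd, so $g'(y)$ is odd as well. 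The second assertion of Lemma~\ref{polyn}, applied with $h = g$, $a = y$, $b = g(y)$, then gives $g'(y)\,g'(g(y)) \equiv 1 \pmod 4$, that is, $a_2(y) \equiv 1 \pmod 4$.

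The main obstacle is exactly this low-level case: the generic machinery of Lemma~\ref{anbn} loses a factor of $p$ at the bottom of the induction, so one must descend to the polynomial structure of $f$ itself and exploit the ``squaring gives $1\pmod 4$'' phenomenon packaged in Lemma~\ref{polyn}. Combining both cases with the observation of the first paragraph completes the proof.
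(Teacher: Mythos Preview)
Your proof is correct and follows the paper's approach: both reduce to showing $a_{n+1}\equiv 1\pmod 4$ and obtain this from Lemma~\ref{polyn} applied to $h=g=f^k$, using that $y\equiv g(y)\pmod 2$ and $g'(y)$ is odd. The only difference is that the paper does not split into cases: your $n=1$ computation $a_{n+1}(x_1)=(f^{k})'(x_1)(f^{k})'(f^{k}(x_1))\equiv 1\pmod 4$ works verbatim for every $n\geq 1$ (since $f^k(x_1)\equiv x_1\pmod{2^n}$ certainly implies congruence mod~$2$), so the detour through Lemma~\ref{anbn-2} for $n\geq 2$ is unnecessary.
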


\begin{proof} Let $x_1$ be a point in $\sigma $. What we have to show is
$a_{n+1}(x_1) \equiv 1 \ ({\rm mod} \ 4) $. Since $\sigma$ is a
growing $k$-cycle, we have
$$f^{k}(x_1)\equiv x_1 \ ({\rm mod} \ 2^n)
,\qquad a_n(x_1) = (f^{k})'(x_1) \equiv 1 \ ({\rm mod} \ 2). $$ So,
by Lemma \ref{polyn}, we have $$ a_{n+1}(x_1)=
(f^{2k})'(x_1)=(f^{k})'(x_1)(f^{k})'(f^{k}(x_1)) \equiv 1 \ ({\rm
mod} \ 4). $$
\end{proof}

A direct consequence is the following result.
\begin{cor}\label{grow-two-times}
If a cycle grows twice (maybe between the two growths, there are
several splittings), then all the lifts will grow forever.
\end{cor}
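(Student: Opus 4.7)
The plan is to track the residue class of $a$ modulo $4$ across all the lifts between the two growths. The key observation is that after the first growth this residue is forced to $1$, and that this condition is preserved by every operation that can occur in the lineage (strong splitting and strong growth). Consequently the second growth is automatically a \emph{strong} growth, and then the first assertion of Proposition \ref{grows} propagates strong growth indefinitely.

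To carry this out, suppose the first growth takes place at level $n$. By Lemma \ref{after-growth}, the unique lift at level $n+1$ is either strongly growing or strongly splitting, so in either case $a_{n+1}\equiv 1 \pmod 4$, i.e.\ $A_{n+1}\geq 2$. I would then argue by induction on the level that for every descendant at every subsequent level $m\geq n+1$ along the lineage leading to the second growth, one has $A_m\geq 2$. The inductive step splits by type: if the descendant strongly grows, its lift strongly grows by Proposition \ref{grows}, so $A$ remains $\geq 2$; if it strongly splits, Lemma \ref{ana_n+1} gives $\min\{A_{m+1},m\}=\min\{A_m,m\}$, and together with $A_m\geq 2$ and $m\geq 2$ this forces $A_{m+1}\geq 2$. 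Note that $A\geq 2$ excludes both weak splitting and weak growth, so only strong operations can appear in this portion of the lineage; this is why the "several splittings" allowed by the statement are automatically strong splits.

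Finally, at the level $\ell$ where the second growth occurs we have $a_\ell\equiv 1 \pmod 4$ by the preceding step and $b_\ell\equiv 1 \pmod 2$ by the definition of growing. Hence the cycle at level $\ell$ is strongly growing, and a straightforward induction using the first assertion of Proposition \ref{grows} shows that the unique lift at each subsequent level is again strongly growing, in particular growing. This is the conclusion of the corollary. The only mildly delicate point is the invariance of $A\geq 2$ under strong splitting, but it is immediate from Lemma \ref{ana_n+1}; no real obstacle is expected.
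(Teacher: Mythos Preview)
Your proof is correct and follows essentially the same route as the paper's: the paper also invokes Lemma~\ref{after-growth} after the first growth and then Proposition~\ref{grows} to propagate strong growth forever. You simply make explicit, via Lemma~\ref{ana_n+1}, the preservation of $a\equiv 1\pmod 4$ through the intermediate (strong) splittings, a step the paper leaves implicit in its one-line appeal to Lemma~\ref{after-growth}.
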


\begin{proof}
 Let $\tilde{\sigma}$ be the lift of the growing cycle $\sigma$.
 Assume
 that after several times of splitting, one of lifts of $\tilde{\sigma} $ grows (then all the lifts at the same level grow).
 By Lemma \ref{after-growth}, this growing lift at a level $n \ge 2$ must strongly grow.
 Thus by Proposition \ref{grows}, the lifts will grow
 forever.
\end{proof}

\medskip
We are now going to prove Propositions
\ref{grows}-\ref{weaklysplits}.
\medskip

 \textit{Proof of Proposition
\ref{grows}.} If $\sigma$ grows, then by (\ref{an-2}), the lift of
$\sigma$ strongly grows or strongly splits. If $\sigma$ strongly
grows,
 then by
(\ref{bn-2}), we have
\[2b_{n+1}(x_i+p^n t) \equiv  2b_n(x_i) \quad ({\rm mod} \ 4).\]
Thus \[b_{n+1}(x_i+p^n t) \equiv b_n(x_i) \not\equiv 0 \quad ({\rm
mod} \ 2).\] Hence the lift of $\sigma$ strongly grows.

If $\sigma$ weakly grows,
 then by
(\ref{bn-2}), we have
\[2b_{n+1}(x_i+p^n t) \equiv  0 \quad ({\rm mod} \ 4).\]
Thus \[b_{n+1}(x_i+p^n t) \equiv 0 \quad ({\rm mod} \ 2).\] Hence
the lift of $\sigma$ strongly splits. \hfill $\Box$

\medskip
\textit{Proof of Proposition  \ref{stronglysplits}.} First notice
that if $\sigma$ strongly splits then $a_n \equiv 1 \ ({\rm mod} \
4)$. Since $n\ge 2$, by Lemma \ref{lem1} we have $a_{\ell} \equiv 1
\ ({\rm mod} \ 4)$ for all $\ell
>n$. So, all the lifts
strongly grow or strongly split.

Proposition \ref{stronglysplits} contains three cases which are
defined by some conditions on $A_n$ and $B_n$. If such a condition
is satisfied, we say  $\sigma $ or $(A_n,B_n) $ belongs to  the
corresponding case.
\smallskip

{\em Case 1: $\min \{A_n, n\} > B_n $}. Recall that by Lemma
\ref{lem1}, both $\min \{A_n(x_i), n \}$ and $\min \{A_n(x_i),
B_n(x_i), n \}$ are independent of $x_i$. Thus in this case, we can
simply write $A_n$ and $B_n$. By (\ref{B_n+1}), we have
$B_{n+1}=B_n-1$.  Thus by (\ref{A_nA_n+1})
\[\min \{A_{n+1}, n+1\} \geq  \min \{A_{n+1}, n\} = \min \{A_n, n\} > B_n >B_{n+1} .\]
 Hence the lifts of $\sigma$ still belong to Case 1.
By induction, we know that after $\ell:=B_n$ times, $B_{n+\ell}=0$
(i.e. $b_{n+\ell} \not =0$ mod $p$). Since $\sigma$ strongly splits,
we have $a_{n+\ell} \equiv 1 \ ({\rm mod} \ 4)$. Thus the lifts at
level $n+\ell$ strongly grow. That is to say all lifts of $\sigma$
split $B_n-1$ times, then all the lifts lifts at level $n+B_n$
strongly grow forever.

\smallskip

{\em Case 2:  $A_n\leq B_n$ and $A_n < n$}. Since $A_n(x_i) <n $ for
some $i$, implies for all $1\leq i \leq k$, $A_n(x_i)=A_n(x_j)$. We
can also deduce that if $A_n(x_i)\leq B_n(x_i)$ for some $i$ then
for all $i$ $A_n(x_i)\leq B_n(x_i)$. Otherwise, if $A_n(x_j)>
B_n(x_j)$ for some $j$, then by Lemma \ref{lem1}, $B_n(x_i)=B_n(x_j)
< A_n(x_j)=A_n(x_i)$ which leads to a contradiction. So in this
case, we can choose any $1\leq i \leq k$ and we simply write $A_n$
and $B_n$. By Lemma \ref{ana_n+1}, we have $A_{n+1}=A_n$. Since
$B_n\geq A_n$, there exists one $t$ such that
\[ b_n + t(a_n-1) \equiv 0 \quad ({\rm mod} \ 2^{A_n+1}),\]
and the other one which we can write as $1-t$ such that
\[ b_n + t(a_n-1) \not\equiv 0 \quad ({\rm mod} \ 2^{A_n+1}).\]
Hence by (\ref{b_n+1}), for one lift of $\sigma$ $B_{n+1} \geq A_n$
and for the other one $B_{n+1} = A_n-1$. Thus for one lift,
$A_{n+1}=A_n\leq B_{n+1}$, and $A_{n+1}=A_n < n+1$. Therefore, this
lift belongs to Case 2. For the other one, $B_{n+1} =
A_n-1=A_{n+1}-1< A_{n+1}$, and $B_{n+1} = A_n-1<n+1$. Thus this lift
belongs to Case 1.
 By induction, we know that one lift of $\sigma$ behaves the same as $\sigma$
 (i.e., strongly splits and satisfies the condition of Case 2 at level $n+1$) and the other one
 splits $A_n-1$ times, then the lifts strongly grow.

 \smallskip

{\em Case 3:  $B_n \geq n$ and $A_n \geq n$}. First we notice that
by Lemma \ref{lem1}, if for some $1\leq i \leq k$, we have $B_n(x_i)
\geq n$ and $A_n(x_i) \geq n$, then for all $1\leq i \leq k$, the
same property established. The following statement will be the same
if we choose another $i$. So we still simply write $A_n$ and $B_n$.
By the definition of $b_n$, if the cycle splits, the order of $b_n$
deceases at most one when the level goes up one step. Since $B_n
\geq n$, we have $B_{n+1} \geq n-1$, and if $n\geq 2$, the lifts of
$\sigma$ still strongly split. Thus by induction, the lifts of
$\sigma$ split at least $n-2$ times. But after that we can not give
any more information. \hfill $\Box$

\medskip
\textit{Proof of Proposition \ref{weaklysplits}.} Since $\sigma $
weakly splits, $a_{n+1} \equiv a_n \equiv 3 \ ({\rm mod} \ 4)$. Thus
$A_{n+1}=A_n=1 < n$ and $B_n \geq 1= A_n $. Thus $(A_n, B_n) $
belongs to Case 2 in Proposition \ref{stronglysplits}. By the proof
of Proposition \ref{stronglysplits}, we know that for one lift of
$\sigma $, $B_{n+1} \geq A_n$ and then $A_{n+1}=A_n \leq B_{n+1} $.
Thus this lift behaves the same as $\sigma $. For the other lift,
$B_{n+1}= A_n-1=0 $. Hence this second lift weakly grows, and then
its lift strongly splits by Proposition \ref{grows}. Therefore, we
complete the proof.
 \hfill $\Box$

\medskip

\setcounter{equation}{0}

\section{Minimal decomposition}\label{formation}
 If a cycle
always grows (grows forever) then it will produce a minimal
component of $f$. If a cycle always splits (splits infinite times)
then it will produce a periodic orbit of $f$. If a cycle grows
tails, it will produce  an attracting periodic orbit with an
attracting basin. We shall describe this more precisely.

Let $\sigma=(x_1,\dots,x_k)$ be a cycle of $f_n$. Recall that in
this case $\sigma $ is called a $k$-cycle at level $n$. Let
\[
  \mathbb{X}:=\bigsqcup_{i=1}^k  (x_i+p^n\mathbb{Z}_p).
\]
There are  four special situations for the dynamical system $f :
\mathbb{X} \to \mathbb{X}$.\\

\indent (S1) Suppose $\sigma $ grows tails. Then $f$ admits a
$k$-periodic orbit with one periodic point in each ball
$x_i+p^n\mathbb{Z}_p \ (1\leq i \leq k)$, and all other points in
$\mathbb{X}$ are attracted into this orbit. In this situation, if
$x$ is a point in the $k$-periodic orbit, then $|(f^k)'(x)|_p<1 $
since $(f^k)'(x)=a_m(x) \equiv 0 \ ({\rm mod} \ p^{m}) $ for all
$m\geq n $. The periodic orbit $(x, f(x), \cdots, f^{k-1}(x))$ is
then attractive.\\

\indent (S2) Suppose $\sigma $ grows and its lifts always grow. Then
$f$ is transitive (minimal) on each $\mathbb{X}/p^m\mathbb{Z}_p \
m\geq n$. Thus, by Theorem \ref{minimal-part-to-whole},  $f$ is
minimal on $\mathbb{X}$. In this case, we say that $\sigma $ is a
starting
growing cycle at level $n$.\\

\indent (S3) Suppose $\sigma$ splits and there is a splitting lift
at each level larger than $n$. Then there is a $k$-periodic orbit
with one periodic point in each $x_i+p^n\mathbb{Z}_p \ (1\leq i \leq
k)$. We say that $\sigma $ is a starting splitting cycle at level
$n$. In this situation, if $x$ is a point in the $k$-periodic orbit,
then $(f^k)'(x)=1 $ since $(f^k)'(x)=a_m(x) \equiv 1 \ ({\rm mod} \
p^{m}) $ for all $m\geq n $. Thus the periodic orbit $(x, f(x),
\cdots, f^{k-1}(x))$ is indifferent.\\

\indent (S4) Suppose  $\sigma=(x_1,\dots,x_k) $ partially splits
($p\geq 3$).  Then by Proposition \ref{partiallysplits}, there is
one lift of length $k$ which still partially splits like $\sigma $.
Thus there is a $k$-periodic orbit with one periodic point in each
$x_i+p^n\mathbb{Z}_p \ (1\leq i \leq k)$. In this situation, if $x$
is a point in the $k$-periodic orbit formed above, then
$|(f^k)'(x)|_p=1 $ since $(f^k)'(x)=a_m(x) \not\equiv 0,1 \  ({\rm
mod} \ p^{m}) $ for all $m\geq n $. Hence, the periodic orbit $(x,
f(x), \cdots, f^{k-1}(x))$ is  indifferent.\\

%\indent (S5) For $p=2$, if $\sigma=(x_1,\dots,x_k) $ is a cycle at
%level $n$ that weakly splits, then by Proposition
%\ref{weaklysplits}, there is one lift of length $k$ which behaves
%the same as $\sigma $. Thus there is a $k$-periodic orbit with one
%periodic point in each $x_i+p^n\mathbb{Z}_p \ (1\leq i \leq k)$.
%\end{rem}

%\begin{rem}

Now we can deduce  all possible periods of the polynomial systems on
$\mathbb{Z}_p$.
\begin{thm}%[\cite{DZ}, see also\cite{Pezda}]
\label{possible-periods} Let $f \in \mathbb{Z}_p$ with
$\deg f \ge
2$.\\
\indent \mbox{\rm 1)} \  If $p\ge 5$, the lengths of periodic orbits
are of the form $a b$
with $a| (p-1)$ and $1\le b \le p$;\\
\indent \mbox{\rm 2)} \  If $p=3$, the lengths of periodic orbits
must be
$1,2,3,4,6$ or $9$;\\
\indent \mbox{\rm 3)} \  If $p=2$, the lengths of periodic orbits
must be $1,2$ or $4$.\\
\indent \mbox{\rm 4)} \ Let $p=2$. If there is $4$-periodic orbit,
then $f_1$ should be a permutation on $\mathbb{Z}/2\mathbb{Z}$.
There is no $4$-periodic orbit for quadratic polynomials.

\end{thm}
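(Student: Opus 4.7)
The plan is to analyze, for a fixed periodic point $x_0\in\mathbb{Z}_p$ of period $k$, the sequence $(k_n)_{n\geq 1}$ where $k_n$ is the length of the $f_n$-cycle containing $x_0\bmod p^n$. Each $k_n$ divides $k$, the sequence is non-decreasing, and $k_n=k$ for $n$ large; hence $k$ is the product of $k_1$ and the successive ratios $k_{n+1}/k_n>1$. By the results of Sections~\ref{preliminary} and~\ref{top-2-adic}, every such ratio is either $p$ (a growth) or some $d>1$ with $d\mid p-1$ (a large lift of a partial splitting, only possible when $p\geq 3$). Since $k_1\leq p$, the task reduces to bounding how many times each type of jump can occur along the sequence.

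Two observations do most of the work. First, a growth is essentially final: by Proposition~\ref{cycle-p>3}(2) the unique lift of any growing cycle of $f_n$ with $p>3$ again grows, forcing perpetual growth and hence placing the orbit in a minimal component rather than in a periodic orbit. The same conclusion holds for $p=3$ from level $n\geq 2$ by Proposition~\ref{cycle-p>3}(1), and for $p=2$ by combining Lemma~\ref{after-growth} with Proposition~\ref{grows}. Consequently at most one growth can contribute, and only when $p\in\{2,3\}$. Second, a partial splitting contributes at most one factor $d\mid p-1$: Lemma~\ref{anbn} yields $a_{n+1}\equiv a_n^d\equiv 1\pmod p$ on the large lift, so subsequent partial splittings are impossible; and the congruence $a_n\equiv 1\pmod p$ inherited after either a growth or a partial splitting also precludes the other type of jump (a non-perpetual growth requires $p\leq 3$ and level $1$, while partial splitting requires $a_n\not\equiv 0,1\pmod p$). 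Direct enumeration with $k_1\leq p$ now gives parts (1)--(3): for $p\geq 5$, $k=k_1 d$ has the form $ab$ with $a\mid p-1$ and $b\leq p$; for $p=3$, $k\in\{1,2,3\}\cup\{2,4,6\}\cup\{3,6,9\}=\{1,2,3,4,6,9\}$; and for $p=2$, $k\in\{1,2,4\}$.

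For part (4), a $4$-periodic orbit in $\mathbb{Z}_2$ forces $k_1=2$, so $f_1$ must be the nontrivial permutation of $\mathbb{Z}/2\mathbb{Z}$. For a quadratic $f(x)=ax^2+bx+c$, reducing mod $2$ (using $x^2\equiv x$) and imposing the level-$1$ growth condition $b_1\equiv 1\pmod 2$ forces $a\equiv 0$, $b\equiv 1$, $c\equiv 1\pmod 2$, with the $2$-cycle at level $1$ being $(0,1)$. It then remains to show that the length-$4$ cycle $\sigma_2$ of $f_2$ strongly grows rather than strongly splits, since by Proposition~\ref{grows} strong growth from level $\geq 2$ is perpetual and yields a minimal component rather than a periodic orbit. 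Equivalently one must verify that $b_2=f^4(0)/4\equiv 1\pmod 2$, i.e.\ $f^4(0)\equiv 4\pmod 8$. Setting $u=f(c)$ and $v=f(u)$, the level-$1$ growth condition gives $u\equiv 2\pmod 4$, hence $u^2\equiv 4\pmod 8$ and $au^2\equiv 0\pmod 8$; one then checks that $v$ is odd with $v^2\equiv 1\pmod 8$, so the computation reduces to $f(v)\equiv a+u+c(b+1)\pmod 8$, and the target congruence $\equiv 4\pmod 8$ is verified by elementary cancellations in the two admissible sub-cases $(a,b)\equiv(0,1)\pmod 4$ and $(a,b)\equiv(2,3)\pmod 4$. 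The main obstacle is precisely this explicit mod-$8$ congruence for quadratic $f$; all of the surrounding structural material is already supplied by the propositions of the preceding sections.
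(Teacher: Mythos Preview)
Your treatment of parts 1)--3) and the first assertion of 4) is correct and is essentially the paper's own argument (which in turn follows Desjardins--Zieve): track the cycle-length sequence $(k_n)$ through the lifting process and use the propositions of Sections~\ref{preliminary} and~\ref{top-2-adic} to show that at most one factor $d\mid p-1$ (from a partial split) and at most one factor $p$ (from a non-perpetual growth, possible only for $p\le 3$ at level~$1$) can appear. The paper proves only 3) and 4) explicitly, citing \cite{DZ,Pezda} for 1) and 2), but your enumeration matches.

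For the second assertion of 4) (no $4$-periodic orbit for quadratics) your route is genuinely different from the paper's. The paper gives no direct argument here; it simply defers to the complete classification of $2$-adic quadratic dynamics in Section~\ref{2-adic} (Theorems~\ref{x^2-lambda}--\ref{x^2+x-d-3}), which occupies several pages. Your mod-$8$ computation is much shorter and self-contained, and it is correct. In fact you do not need the sub-case split: since $c$ is odd one has $c^2\equiv 1\pmod 8$, hence $u=ac^2+c(b+1)\equiv a+c(b+1)\pmod 8$; substituting $c(b+1)\equiv u-a$ into your expression $f(v)\equiv a+u+c(b+1)\pmod 8$ gives $f(v)\equiv 2u\equiv 4\pmod 8$ immediately. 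What the paper's longer route buys is the full minimal decomposition for every quadratic, of which the absence of $4$-cycles is a by-product; your argument isolates exactly the congruence needed and nothing more.
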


\begin{rem}
The statements 1)-3) were due to Pezda (\cite{Pezda}). By using the
idea we explained in the preceding sections which is quite different
from that of Pezda, Desjardins and Zieve (\cite{DZ}) also gave 1)
and 2). The statement 4) is new.
\end{rem}

\begin{proof} We only show 3) and 4), because the proofs of 1) and 2) are
similar and can be found in \cite{DZ} and \cite{Pezda}.
%The first
%assertion of 3) can also be found in \cite{Pezda}.

Notice that any periodic orbit comes from an infinite sequence of
splitting of some cycle, and that the length of the periodic orbit
is the length of the starting splitting cycle. So, what we want to
study are all possible lengths of  starting splitting cycles.

The possible lengths of cycles at the first level (i.e. the cycles
of $f_1$ on $\mathbb{Z}/2\mathbb{Z}$) are $1$ and $2$. Notice that
the growth of  length must be multiplied $2$, according to our
discussion  in the preceding sections. So, the possible lengths of
cycles are $2^k$ ($k\geq 0$). However, by Corollary
\ref{grow-two-times}, if a cycle grows twice it will grow forever.
There,  any cycle of length $2^k$ ($k\geq 3$), which must have grown
twice, can not be a starting splitting cycle. Hence the lengths of
starting splitting cycles can only be $1,2,4$. This completes the
proof of 3).

If there is a periodic orbit of length $4$, there must be a starting
splitting cycle of length $4$. This is possible only in  the
following case: at the first level, $f_1 $ admit a $2$-cycle.
Otherwise it needs to grow twice and then its lifts will grow
forever.  This will produce a clopen minimal set not a periodic
orbit. This is the first part of 4). For the second part of 4), one
can see from our study on the quadratic polynomials in Section
\ref{2-adic} (Theorems \ref{x^2-lambda}-\ref{x^2+x-d-3}).
\end{proof}

\medskip

\begin{thm}\label{thm-decomposition}Let $f \in \mathbb{Z}_p[x]$ with $\deg
f \ge 2$. We have the following decomposition
$$
     \mathbb{Z}_p = A \bigsqcup B \bigsqcup C
$$
where $A$ is the finite set consisting of all periodic points of
$f$, $B= \bigsqcup_i B_i$ is the union of all (at most countably
many) clopen invariant sets such that each $B_i$ is a finite union
of balls and each subsystem $f: B_i \to B_i$ is minimal, and each
point in $C$ lies in the attracting basin of a periodic orbit or of
a minimal subsystem.
\end{thm}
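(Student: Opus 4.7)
The plan is to classify each $x\in\mathbb{Z}_p$ according to the eventual behaviour of its orbit modulo $p^n$, using the cycle analysis of Sections~\ref{preliminary} and~\ref{top-2-adic} summarised in the four situations (S1)--(S4) above. I would first note that $A$ is finite: by Theorem~\ref{possible-periods} the admissible periods are bounded by a constant depending only on $p$, and the $k$-periodic points of $f$ form the zero set in $\mathbb{Z}_p$ of the nonzero polynomial $f^k(x)-x$. Next I would \emph{define} the minimal components: call a cycle $\sigma=(x_1,\dots,x_k)$ of $f_n$ a \emph{starting growing cycle at level $n$} if $\sigma$ grows and every iterated lift at levels $n+1,n+2,\dots$ grows as well (situation (S2)). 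By Theorem~\ref{minimal-part-to-whole} the clopen $f$-invariant set $\mathbb{X}_\sigma:=\bigsqcup_{i=1}^k(x_i+p^n\mathbb{Z}_p)$ is then minimal; taking for each such clopen set the lowest level at which it first appears and recalling that $f_n$ has only finitely many cycles for each $n$, we get a countable family $\{B_i\}$ of finite unions of balls.

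The heart of the argument is to follow an arbitrary $x\in\mathbb{Z}_p$ level by level. For each $n$, the orbit of $x\bmod p^n$ under $f_n$ eventually enters a unique cycle $C_n(x)$ of $f_n$; set $\mathbb{X}_n(x):=\bigsqcup_{y\in C_n(x)}(y+p^n\mathbb{Z}_p)$. Then $\mathbb{X}_{n+1}(x)\subseteq\mathbb{X}_n(x)$, each $\mathbb{X}_n(x)$ is $f$-invariant, and $f^m(x)\in\mathbb{X}_n(x)$ for all sufficiently large $m$; hence the compact invariant set $\mathbb{X}_\infty(x):=\bigcap_n\mathbb{X}_n(x)$ attracts $x$. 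The non-decreasing sequence $|C_n(x)|$ is either bounded or tends to infinity. If bounded, it stabilises at some $k$, and from some level on $C_n(x)$ is obtained from $C_{n-1}(x)$ either by splitting, by growing tails (taking the unique length-$k$ lift), or by partial splitting (taking the length-$k$ lift, for $p\ge 3$); the nested $k$-tuples of balls then shrink to a single $k$-periodic orbit of $f$ that equals $\mathbb{X}_\infty(x)$. If $|C_n(x)|\to\infty$, the cycle must grow at infinitely many levels, and by Corollary~\ref{grow-two-times} for $p=2$, Proposition~\ref{cycle-p>3} for $p\ge 3$, together with Proposition~\ref{partiallysplits} to handle interspersed partial splittings, only finitely many growths can precede an always-growing regime; from some level $n_0$ onwards $C_n(x)$ is therefore a starting growing cycle, $\mathbb{X}_n(x)=\mathbb{X}_{n_0}(x)$ stabilises, and $\mathbb{X}_\infty(x)$ equals the corresponding minimal component $B_i$.

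The partition then follows immediately: if $x\bmod p^n\in C_n(x)$ for every $n$ then $x\in\mathbb{X}_\infty(x)$, placing $x$ in $A$ (bounded case) or in some $B_i$ (unbounded case); otherwise $x$ lies strictly in the attracting basin of $\mathbb{X}_\infty(x)$, and so $x\in C$. Disjointness of the three pieces, and of the $B_i$ among themselves, follows from distinct minimal sets being disjoint. I expect the main technical obstacle to be the stabilisation step in the unbounded case when partial splittings ($p\ge 3$ only) alternate with growths: one must verify that only finitely many growths can precede an always-growing regime, and this requires careful bookkeeping of the valuations $A_n$ and $B_n$ along successive lifts via Propositions~\ref{cycle-p>3}--\ref{partiallysplits} and Corollary~\ref{grow-two-times}. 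The remainder is routine level-by-level inventory.
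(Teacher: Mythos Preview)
Your argument is correct and is organised rather differently from the paper's. The paper proceeds \emph{cycle by cycle}: starting at level~$2$ it runs through the finite list of cycles of $f_2$ and, for each, applies the case analysis (P1)--(P4) for $p\ge 3$ and (Q1)--(Q5) for $p=2$; the only place where the argument threatens not to terminate is Case~3 of Propositions~\ref{splits-p3} and~\ref{stronglysplits} (both $A_n,B_n\ge n$), and there the paper invokes the finiteness of the set of periodic points to force the recursion to stop. You instead proceed \emph{point by point}, tracking for each $x$ the nested cycles $C_n(x)$ and dichotomising on whether $|C_n(x)|$ is bounded. This sidesteps the termination issue entirely: in your scheme the finiteness of $A$ is proved once at the outset and is not reused inside the decomposition argument. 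Your approach is thus a little more conceptual and avoids the explicit (P1)--(P4)/(Q1)--(Q5) inventory, while the paper's approach is more constructive and closer to an algorithm that actually outputs the pieces.

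One remark on the ``technical obstacle'' you flag. Partial splittings cannot alternate indefinitely with growths: as soon as $|C_n(x)|$ increases for the first time (by a growth, or by taking a $kd$-lift of a partially splitting cycle), one has $a_{n+1}\equiv 1\pmod p$, and this condition persists under all further lifts. Hence at most one partial-splitting step can contribute to the growth of $|C_n(x)|$; after that every length increase is a genuine growth, and Proposition~\ref{cycle-p>3} (for $p\ge 3$, $n\ge 2$) or Corollary~\ref{grow-two-times} (for $p=2$) gives the always-growing regime immediately. So the bookkeeping you anticipate is lighter than you suggest.
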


\begin{proof} We first explain that there are only finitely many periodic
points. In fact, by Theorem \ref{possible-periods}, there are only
finitely many   possible lengths of periods. Periodic points are
solutions of the equations $f^{q_i}(x)=x$ with $\{q_i\} $ being one
of possible lengths of periods. Since $\deg f\ge 2$, each equation
admits a finite number of solutions. So, there is only a finite
number of periodic points.

We start from the second level. Decompose $\mathbb{Z}_p$ into $p^2$
balls with radius $p^{-2}$. Each ball is identified with a point in
$\mathbb{Z}/p^2 \mathbb{Z}$. The induced map $f_2$ admits some
cycles. The points outside any cycle are mapped into the cycles. The
ball corresponding to such a point  will be put into the third part
$C$. From now on, we really start our analysis with cycles at level
$n\ge 2$. Let  $\sigma=(x_1,\dots,x_k)$ be a cycle at level $n \geq
2$. Let
$$
   \mathbb{X}=\bigsqcup_{i=1}^k
(x_i+p^n\mathbb{Z}_p).
$$
%We first examine the case $p\geq 3 $.

{\sc Suppose
 $p\geq 3$}. We distinguish four cases.\\
 \indent (P1) {\em $\sigma$ grows tails.}  Then by (S1), the clopen set $\mathbb{X}
$ consists of a $k$-periodic orbit and other points are attracted by
this periodic orbit. So, $\mathbb{X} $ contributes to the first part
$A$ and the third part $C$.

\indent (P2) {\em $\sigma $ grows}. Then by Proposition
\ref{cycle-p>3}, $\sigma $ is in the situation (S2). Therefore
$\mathbb{X}$ is a minimal component. So, $\mathbb{X}\subset B$.

\indent (P3) {\em  $\sigma$ splits}.  Then we shall apply
Proposition \ref{splits-p3}.
\begin{itemize}
  \item If $\sigma$ belongs to Case 1 described by Proposition
\ref{splits-p3}, then after finitely many times of splitting, the
lifts will grow forever and so they are in the situation of (S2).
Therefore we get a finite number of minimal components, all
belonging to $B$.
\item If $\sigma$ belongs to Case 2, then there is one
lift of $\sigma$ sharing the property (S3), and other lifts
different from the cycle containing the periodic orbit (at any level
$m \geq n+1 $)  find themselves  in the situation (S2) after
finitely many times of lifting. Therefore, we get a  periodic orbit
and countable infinite minimal components.
\item If $\sigma$ belongs to
Case 3, then $\sigma $ splits into $p^{n} $ cycles at level $2n$.
These cycles at level $2n$ may continue this procedure of analysis
of (P3). But this procedure can not continue infinitely, because
there is only a finite number of periodic points. So, all these
cycle may continue to split but they must end with their lifts
belonging either to Case 1 or Case 2 in Proposition \ref{splits-p3}.
So, $\mathbb{X}$ contributes to both $A$ and $B$.
\end{itemize}

\indent (P4) {\em  $\sigma$ partially splits}. Then $\sigma$ is in
the situation (S4).  Thus there comes out a periodic orbit. Suppose
$\sigma_m $ is the lift of $\sigma$ containing the periodic orbit at
level $m \geq n+1 $. If $\sigma_m $ belongs to Case 1 in Proposition
\ref{partiallysplits}, then the other lifts different from
$\sigma_{m+1}$, will be in the situation (S1) after finite times. If
$\sigma_m $ belongs to Case 2 in Proposition \ref{partiallysplits},
then each of other lifts different from $\sigma_{m+1}$, split to be
$p^{nd-1}$ cycles at level $nd$. We then go to (P3) for these cycles
at level $nd$.

\smallskip
{\sc Suppose
 $p= 2$}. We distinguish five cases.\\
 \indent (Q1) {\em $\sigma$ grows tails}. Then $\sigma$ is in
the situation (S1). We have the same conclusion as (P1) above.

\indent (Q2) {\em $\sigma$ strongly grows}.  Then by Proposition
\ref{grows}, $\sigma $ is  in the situation (S2).  We have the same
conclusion as (P2) above.

\indent (Q3) {\em $\sigma$ strongly splits}.  By Proposition
\ref{stronglysplits}, the arguments are  the same as (P3): The
procedures will be ended if the condition  1) or 2) in Proposition
\ref{stronglysplits} is satisfied. If the condition 3) in
Proposition \ref{stronglysplits} is satisfied, we repeat the
analysis of (Q3) for the lifts of $\sigma$. But the procedures will
be eventually ended with the condition  1) or 2), because there is
only a finite number of periodic points.

\indent (Q4) {\em $\sigma$ weakly grows}. Then by Proposition
\ref{grows}, the lift of $\sigma $ strongly splits.  We are then in
the case (Q3).

\indent (Q5) {\em $\sigma$ weakly splits}.  By Proposition
\ref{weaklysplits}, then one lift is in the situation (S3) which
produces a periodic orbit, and the other lifts different from the
cycle containing the periodic orbit, at any level $m \geq n+1 $,
will weakly grow.  Then we are in the case (Q4).

All  the above procedures will stop. So, we get the decomposition in
finite steps.
\end{proof}

  We have excluded the affine polynomials from the theorem. Exactly speaking, the conclusion
  is false for   affine polynomials. For example, every points in $\mathbb{Z}_p $ are fixed
  by
  $f(x):=x$.
Anyway, affine polynomials have been fully studied in \cite{FLYZ}.

\begin{cor}\label{thm-neutral-fixed-points}
Let  $f \in \mathbb{Z}_p[x] $ with $\deg f \ge 2$.  If $f$ admits an
 indifferent fixed point or a periodic orbit, then there exists
a sequence of minimal components with their diameters and their
distances from the fixed point or the periodic orbit tending to
zero.
\end{cor}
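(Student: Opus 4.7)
My plan is to follow the minimal-decomposition machinery developed in the preceding sections, specialised to the cycles of $f_n$ that project the given indifferent orbit. Write $\mathcal{O}=\{\xi_1,\dots,\xi_k\}$ for the orbit (with $k=1$ if it is a fixed point), set $x_i^{(n)}=\xi_i\bmod p^n$, $\sigma_n=(x_1^{(n)},\dots,x_k^{(n)})$, and $\mathbb{X}_n=\bigsqcup_{i=1}^k(x_i^{(n)}+p^n\mathbb{Z}_p)$. For $n$ large enough the $\xi_i$ are pairwise distinct mod $p^n$, so $\sigma_n$ is a cycle of $f_n$ of length exactly $k$. Since $\mathcal{O}$ is indifferent, the quantity $a_n(x_i^{(n)})=\prod_j f'(\xi_j)$ is a $p$-adic unit, so $\sigma_n$ does not grow tails. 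It cannot grow either: any growing cycle of length $k$ at level $n$ produces a unique lift of length $pk$, which would contradict the fact that $\sigma_{n+1}$ still has length $k$. Hence $\sigma_n$ splits (strongly or weakly when $p=2$) or partially splits (possible only when $p\ge3$) at every sufficiently large level $n$.

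The heart of the argument is to extract a minimal component from the lifts of $\sigma_n$ that do \emph{not} contain $\mathcal{O}$ mod $p^{n+1}$. Exactly one lift at level $n+1$, namely $\sigma_{n+1}$, contains the projected orbit; every other lift of $\sigma_n$ is a cycle of $f_{n+1}$ lying in the clopen complement $\mathbb{X}_n\setminus\mathbb{X}_{n+1}$. The case analysis for $p\ge3$ (Propositions \ref{splits-p3}, \ref{partiallysplits}) and for $p=2$ (Propositions \ref{grows}, \ref{stronglysplits}, \ref{weaklysplits}, together with Corollary \ref{grow-two-times}) shows that any such sibling lift falls into one of three regimes: it eventually ends up in Case 1 of Proposition \ref{splits-p3} (resp.\ Proposition \ref{stronglysplits}), so its lifts grow forever and produce a minimal component; it continues splitting forever, producing a new periodic orbit of $f$; or it iterates the ``continue the analysis'' situation. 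Because $\deg f\ge2$, the solution sets of $f^q(x)=x$ are finite for each of the finitely many possible periods $q$ furnished by Theorem \ref{possible-periods}, so only finitely many sibling lifts over all levels can yield new periodic orbits. Consequently, for every sufficiently large $n$, at least one sibling lift of $\sigma_n$ must eventually begin to grow and thereby produce a minimal component $M_n\subset\mathbb{X}_n\setminus\mathbb{X}_{n+1}$.

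Since $M_n\subset\mathbb{X}_n$, each of the (at most $k$ or $kd$) balls forming the clopen set $M_n$ has diameter at most $p^{-n}$, and each such ball lies within $p$-adic distance $p^{-n}$ of the corresponding point of $\mathcal{O}$; the Hausdorff distance from $M_n$ to $\mathcal{O}$ is therefore at most $p^{-n}$. Letting $n\to\infty$ delivers the required sequence of minimal components. The main obstacle is the second paragraph: one must rule out the \emph{a priori} possibility that along the tower of sibling lifts one meets an infinite cascade of newly created periodic orbits without ever falling into a ``grows forever'' regime. This is precisely where the hypothesis $\deg f\ge2$ is indispensable, since it, combined with Theorem \ref{possible-periods}, bounds the total number of periodic points of $f$ and forces the cascade to terminate at a genuine minimal component.
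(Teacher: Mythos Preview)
Your proof is correct and follows essentially the same approach as the paper's: you project the indifferent orbit to cycles $\sigma_n$, note that these must split or partially split at every level, and then use the finiteness of periodic points (from $\deg f\ge 2$ together with Theorem~\ref{possible-periods}) to conclude that for all sufficiently large $n$ the sibling lifts of $\sigma_n$ cannot split forever and must therefore feed into the ``grows forever'' regime, yielding minimal components inside $\mathbb{X}_n$. The paper phrases the finiteness step slightly differently---it first isolates the orbit in an $\epsilon$-neighbourhood containing no other periodic orbit and then takes $p^{-n}<\epsilon$---but this is the same argument. One minor imprecision: you write $a_n(x_i^{(n)})=\prod_j f'(\xi_j)$, whereas strictly $a_n(x_i^{(n)})=(f^k)'(x_i^{(n)})$ is only \emph{congruent} to $\prod_j f'(\xi_j)$ modulo $p^n$; this does not affect the conclusion since being a unit is determined mod~$p$.
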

\begin{proof} Suppose $(x_1,\dots,x_k)$ is an indifferent periodic orbit. Let
$x^{(n)}_j \in \mathbb{Z}/p^n \mathbb{Z} $ and $x^{(n)}_j \equiv x_j
\ ({\rm mod} \ p^{n})$ for $1\leq j \leq k $. Then
$\sigma_n=(x^{(n)}_1,\dots,x^{(n)}_k) $ is a splitting or partially
splitting cycle at level $n$. By the procedures of the
decomposition, the cycle $\sigma_n$ should be in the situation (S3)
or (S4). That is to say $\sigma_n$ splits for all $n$ or $\sigma_n$
partially splits for all $n$.

 Since there are only finite number of periodic orbits,
 for any $\epsilon>0$ small enough, there is no other periodic orbits in the $\epsilon$
 neighborhood of the orbit $(x_1,\dots,x_k)$.
 Take $n$ such that $p^{-n}< \epsilon$. Then the lifts of $\sigma_n $ which are
different to $\sigma_{n+1} $ will never split infinitely. Hence they
will grow after finite times. Then all the lifts of $\sigma_n $
which are different to $\sigma_{n+1} $, considered as union of
balls, consist of finite number of minimal components. Since these
balls are contained in $x_j+p^{n}\mathbb{Z}_p$ for each $j$
respectively. Thus there is a minimal component such that the
diameter and the distance to the orbit $(x_1,\dots,x_k)$ are all
less than $p^{-n}$. The result is obtained if we consider infinitely
$n$ and find one minimal component for each $n$.
\end{proof}

\setcounter{equation}{0}

\section{Conjugacy classes of Minimal subsystems}\label{typicalversion}
Recently, Chabert, Fan and Fares \cite{CFF} proved that minimal sets
of a $1$-Lipschitz map are Legendre sets. We shall prove that
minimal sets of a polynomial are some special Legendre sets. A set
$E \subset \mathbb{Z}_p$ is a {\em Legendre set} if for any $s\ge 1$
and any $x \in E/p^s \mathbb{Z}_p$, the number
$$
      q_s := \mbox {\rm Card  }\left\{y\in E/p^{s+1}\mathbb{Z}_p:  \ y \equiv x \!\mod p^s\right\}
$$
is independent of $x \in E/p^s \mathbb{Z}_p$. Let
     $$ p_s := q_1q_2\cdots q_s \qquad (\forall s\ge 1).$$
It is clear that $p_s = \mbox {\rm Card  } E/p^s \mathbb{Z}_p$. We
call $(p_s)_{s\ge 1}$ the {\em structure sequence} of $E$.  Consider
the inverse limit
$$
      \mathbb{Z}_{(p_s)} : = \lim_{\leftarrow} \mathbb{Z}/p_s
      \mathbb{Z}.
$$
This is a profinite group, usually called an {\em odometer}, and the
map $\tau: x \mapsto x+1$ is called the {\em adding machine} on
$\mathbb{Z}_{(p_s)}$.

\begin{thm}[\cite{CFF}]
Let $E$ be a clopen set in $\mathbb{Z}_p$ and $f: E \rightarrow E$
be a $1$-Lipschitz map. If the  dynamical system $(E, f)$ is
minimal, then $f$ is an isometry, $E$ is a Legendre set and the
system $(E, f)$ is  conjugate to the adding machine
$(\mathbb{Z}_{(p_s)}, \tau) $ where $(p_s)$ is the structure
sequence of $E$. On the other hand, on any Legendre set there exists
at least one minimal map.
\end{thm}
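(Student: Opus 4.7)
The plan is to establish the four assertions in sequence: that $f$ is an isometry, that $E$ is a Legendre set, that $(E,f)$ is topologically conjugate to the adding machine $(\mathbb{Z}_{(p_s)},\tau)$, and that every Legendre set carries a minimal $1$-Lipschitz map, each step feeding the next. The initial observation is that since $f$ is $1$-Lipschitz it factors through well-defined induced maps $f_s : E/p^s\mathbb{Z}_p \to E/p^s\mathbb{Z}_p$ for every $s \ge 1$. By the part-to-whole minimality criterion (the $1$-Lipschitz analogue of Theorem \ref{minimal-part-to-whole}, cf.\ \cite{CFF, Anashin2}), minimality of $(E,f)$ forces each $f_s$ to be a transitive bijection of the finite set $E/p^s\mathbb{Z}_p$, hence a single cycle of length $p_s := \mathrm{Card}(E/p^s\mathbb{Z}_p)$. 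In particular $f_s$ is injective, so $x \not\equiv y \pmod{p^s}$ in $E$ forces $|f(x)-f(y)|_p \ge p^{-s}$; letting $s \to \infty$ and combining with $1$-Lipschitz yields $|f(x)-f(y)|_p = |x-y|_p$, so $f$ is an isometry.

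To show $E$ is Legendre, I would fix $x, x' \in E/p^s\mathbb{Z}_p$ and compare the fibers $F_x, F_{x'} \subset E/p^{s+1}\mathbb{Z}_p$ above them. Choose $k$ with $f_s^k(x) = x'$ (possible by transitivity of $f_s$); since $f_{s+1}$ is a bijection reducing to $f_s$, the iterate $f_{s+1}^k$ maps $F_x$ bijectively onto $F_{x'}$, giving $|F_x|=|F_{x'}|$ and thus the Legendre property, with the $p_s$ above coinciding with the structure sequence. For the conjugacy, fix a base point $x_0 \in E$ and define $\phi:E \to \mathbb{Z}_{(p_s)}$ by $\phi(x) = (k_s)_s$, where $k_s \in \mathbb{Z}/p_s\mathbb{Z}$ is the unique residue with $f^{k_s}(x_0) \equiv x \pmod{p^s}$; existence and uniqueness both come from $f_s$ being a full cycle of length $p_s$, and the compatibility $k_{s+1} \equiv k_s \pmod{p_s}$ is automatic by reduction. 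The identity $\phi \circ f = \tau \circ \phi$ is immediate, and $\phi$ is continuous (its $s$-th coordinate depends only on $x \bmod p^s$), injective (separate levels separate points), and surjective by the standard inverse-limit compatibility argument; compactness upgrades it to a homeomorphism.

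For the converse, given a Legendre set $E$ with structure sequence $(p_s)$, I would inductively construct level bijections $\psi_s : E/p^s\mathbb{Z}_p \to \mathbb{Z}/p_s\mathbb{Z}$ compatible with the two projection systems. At level $s+1$ the Legendre property guarantees that above each point of $E/p^s\mathbb{Z}_p$ the fiber has size $q_{s+1} = p_{s+1}/p_s$, matching the fiber size in $\mathbb{Z}/p_{s+1}\mathbb{Z} \to \mathbb{Z}/p_s\mathbb{Z}$, so any bijection between these equal-sized fibers extends $\psi_s$ to $\psi_{s+1}$. Taking the inverse limit produces a homeomorphism $\psi: E \to \mathbb{Z}_{(p_s)}$, and $f:=\psi^{-1}\circ \tau \circ \psi$ is minimal by construction and $1$-Lipschitz as a map on $E \subset \mathbb{Z}_p$ because $\psi$ respects the ultrametric tree structure: $|x-y|_p \le p^{-s}$ forces $\psi(x) \equiv \psi(y) \pmod{p_s}$, hence $\psi(f(x)) \equiv \psi(f(y)) \pmod{p_s}$, hence $|f(x)-f(y)|_p \le p^{-s}$.

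The main obstacle, conceptually clean but with hidden bookkeeping, is the last step: verifying that the inductively chosen $\psi_s$'s paste into a map that is $1$-Lipschitz on $E$ viewed as a subset of $\mathbb{Z}_p$, rather than only an abstract profinite tree isomorphism. This reduces to ensuring that the identification of $E/p^s\mathbb{Z}_p$ with the depth-$s$ tree node uses precisely the $p$-adic congruence modulo $p^s$, so that ultrametric proximity in $\mathbb{Z}_p$ translates to vertical proximity in the tree; once this is checked, the transported adding machine is automatically isometric.
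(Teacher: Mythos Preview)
The paper does not contain a proof of this theorem; it is quoted from \cite{CFF} and used as a black box in the proof of the subsequent Theorem~\ref{structure-minimal}. There is therefore no internal argument to compare your proposal against.

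That said, your argument is essentially the standard one and is correct. The four steps are what one expects: injectivity of each induced $f_s$ (which you correctly deduce from transitivity on a finite set via surjectivity) forces the isometry; transitivity of $f_s$ together with bijectivity of $f_{s+1}$ forces equal fiber sizes, giving the Legendre property; the orbit-coding map $\phi(x)=(k_s)_s$ is the usual conjugacy to the odometer; and the inverse-limit construction of $\psi$ produces a minimal $1$-Lipschitz map on any Legendre set. Your closing remark about ``hidden bookkeeping'' in the converse is accurate but not a real obstacle: because each $\psi_s$ is, by construction, a bijection defined on $E/p^s\mathbb{Z}_p$, the implication $|x-y|_p\le p^{-s}\Rightarrow |f(x)-f(y)|_p\le p^{-s}$ for $f=\psi^{-1}\circ\tau\circ\psi$ is automatic, exactly as you verify in your final sentence.
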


We improve the above result in the case of polynomials by giving
more information on the structure sequence.

\begin{thm}\label{structure-minimal} Let $f \in \mathbb{Z}_p[x]$ with $\deg
f \ge 2$. If $E$ is a minimal clopen invariant set of $f$, then $f :
E \to E$ is conjugate to the adding machine on an odometer
$\mathbb{Z}_{(p_s)}$, where  $$(p_s) = (k, kd, k dp, kdp^2,
\cdots)$$ with some $1\leq k \in \mathbb{N}, k\leq p$ and $d|
(p-1)$.
\end{thm}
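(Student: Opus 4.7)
The plan is to apply the theorem of Chabert, Fan and Fares recalled just above, which already gives a conjugacy of $(E,f)$ with the adding machine on the odometer $\mathbb{Z}_{(p_s)}$, where $p_s=|E/p^s\mathbb{Z}_p|$ is the structure sequence of the Legendre set $E$. Since an odometer is determined up to topological isomorphism by its supernatural number, it suffices to prove that the supernatural number of $(p_s)$ is $k\cdot d\cdot p^{\infty}$ for some $1\le k\le p$ and some $d\mid p-1$; this is the supernatural number attached to the canonical sequence $(k,kd,kdp,kdp^2,\ldots)$.

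Minimality of $f$ on $E$ forces $f_s$ to act transitively on $E/p^s\mathbb{Z}_p$, so this set is a single $f_s$-cycle $\sigma_s$ of length $p_s$ for every $s\ge 1$. In particular $p_1=k$ is the length of a cycle of $f_1$ on $\mathbb{Z}/p\mathbb{Z}$, whence $1\le k\le p$. I then inspect each lift $\sigma_s\rightsquigarrow\sigma_{s+1}$ using the four-way classification of Sections~\ref{preliminary}--\ref{top-2-adic}. The ``grows tails'' case is ruled out, as it would force $E$ to meet both an attracting periodic orbit and a proper, non-empty part of its attracting basin, contradicting minimality. Hence the multiplier $q_{s+1}:=p_{s+1}/p_s$ can only be $1$ (splits, or short lift of a partial split), $p$ (grows), or $d_s\mid p-1$ (long lift of a partial split, where $d_s=\mathrm{ord}(a_s)$ in $(\mathbb{Z}/p\mathbb{Z})^*$); recall that for $p=2$ the partial-split case does not arise.

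Two structural facts then pin down $(q_s)$. First, by Proposition~\ref{cycle-p>3} for $p\ge 3$ and Proposition~\ref{grows} for $p=2$, once $\sigma_s$ enters the growing regime at a sufficiently large level, all subsequent lifts grow as well, so there exists $s_0$ with $q_s=p$ for every $s\ge s_0$; this accounts for the $p^{\infty}$ part. Second, the partial-splitting regime can be exited at most once: applying Lemma~\ref{anbn} with $r=d_s$, if $E$ ever selects the long lift of a partially splitting $\sigma_s$, then $a_{s+1}\equiv a_s^{d_s}\equiv 1\pmod{p}$, so no later cycle can partially split; whereas so long as $E$ keeps selecting the short lift, the multiplier is $1$ and $\sigma_s$ continues to partially split with the same $d_s$. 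Therefore $\prod q_s$ contains at most one factor $d\mid p-1$, eventually followed only by $p$'s, which gives the desired supernatural number $k\cdot d\cdot p^{\infty}$.

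The main obstacle is the second structural fact; without it, partial splitting could recur at distinct levels and introduce several non-trivial divisors of $p-1$ into the supernatural number, so that $E$ would not be of the claimed canonical form. The rest is essentially a repackaging of the level-by-level classification already developed in Sections~\ref{preliminary}--\ref{top-2-adic}.
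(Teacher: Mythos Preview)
Your proposal is correct and rests on the same level-by-level cycle analysis (Sections~\ref{preliminary}--\ref{top-2-adic}) that the paper uses; the paper simply carries it out by explicit case enumeration for $p\ge 5$, $p=3$, $p=2$ and then invokes Buescu--Stewart, whereas you package the same information uniformly through the supernatural number. One small point you elide: Propositions~\ref{cycle-p>3} and~\ref{grows} tell you the growing regime is absorbing once entered at a high enough level, but not that it \emph{is} entered; this follows at once from the clopen hypothesis (so $E$ is a finite union of balls of some fixed radius $p^{-N}$, forcing $q_s=p$ for all $s\ge N$), and you should say so.
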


\begin{proof}
By our previous discussion on the cycles of $f_n$ on
$\mathbb{Z}/p^n\mathbb{Z}$, a clopen minimal set $E$ is formed when
a cycle grows forever. If $n$ is the starting level for the  cycle
to grow, then $E$ is a union of some balls with radius $p^{-n}$.
Therefore, for $s\geq n $, every nonempty intersection of $E$ with a
ball of radius $p^{-s}$ contains $p$ balls of radius $p^{-(s+1)}$.
That is to say $q_s =p$.  From the cycle at the first level to the
starting growing cycle at level $n$, the growth of cycle length  is
multiplied by $1$, $p$ or some  $d$ satisfying $d|(p-1)$. That is to
say for $1 \leq s< n$, every nonempty intersection of $E$ with a
ball of radius $p^{-s}$ contains the same number ($1, p$ or $d$) of
balls of radius $p^{-(s+1)} $. Thus $E$ is a Legendre set. To
determine $p_s$ for $1\le s<n$, we distinguish three cases: $p\ge
5$, $p=3$, $p=2$.

{\em Case $p\geq 5$}.  In this case, when a cycle grows, its lift
grows forever. A cycle at level $1$ may start with growing, several
times of splitting or several times of partially splitting and then
the lifts grow forever. Therefore, there are three ways to form a
minimal set. We show the three ways by the growth of cycle length as
follows ($k$ being the length of the cycle $\sigma$ at the
level $1$).\\
\indent Case 1. $\sigma$ grows:
\begin{eqnarray*}
(k,kp,kp^2,\dots),
\end{eqnarray*}
\indent Case 2. $\sigma$ splits:
\begin{eqnarray*}
(k,k,\dots, k,kp,kp^2,\dots),
\end{eqnarray*}
\indent Case 3. $\sigma$ partially splits:
\begin{eqnarray*}
(k,kd,\dots, kd,kdp,kdp^2,\dots), \quad  d|(p-1), d\geq 2.
\end{eqnarray*}
The above three cases correspond to three kinds of adding machines.
However, by the result of Buescu and Stewart \cite{BuescuStewart},
 the adding machines in both Case 1 and Case 2 are  conjugate to
$(\mathbb{Z}_{(p_s)}, \tau) $ where $p_s=(k,kp,kp^2,\dots)$. In Case
3, the adding machines are all conjugate to $(\mathbb{Z}_{(p_s)},
\tau) $ where $p_s=(k,kd,kdp,kdp^2,\dots)$ and $d|(p-1), d\geq 2$.

{\em Case $p=3$}. We distinguish four cases.\\
 \indent Case 1. $\sigma$ grows and its lift also grows:
\begin{eqnarray*}
(k,kp,kp^2,\dots),
\end{eqnarray*}
 \indent Case 2. $\sigma$ grows but its lift splits:
\begin{eqnarray*}
(k,kp,\dots, kp, kp^2,\dots),
\end{eqnarray*}
\indent Case 3. $\sigma$ splits:
\begin{eqnarray*}
(k,k,\dots, k,kp,kp^2,\dots),
\end{eqnarray*}
\indent Case 4. $\sigma$ partial splits:
\begin{eqnarray*}
(k,kd,\dots, kd,kdp,kdp^2,\dots), \quad  d|(p-1), d\geq 2.
\end{eqnarray*}
Then  $(E, f)$ is conjugate to $(\mathbb{Z}_{(p_s)}, \tau) $ where
$p_s=(k,kd,kdp,kdp^2,\dots)$ with $1\leq k \leq p $, and $d|(p-1) $.

{\em Case $p=2$}.  We distinguish twelve  cases.
$$
(1,\underbrace{1,1, \dots 1}_{{\rm strongly \ split}}, 2, 2^2,
2^3,\dots), $$
$$ (1,\underbrace{1}_{{\rm strongly \ grows}}, 2,
2^2, 2^3,\dots),
$$
\[
(1,\underbrace{1}_{{\rm weakly \ splits}}, \underbrace{1}_{{\rm
weakly \ grows}},\underbrace{2, \dots 2}_{{\rm strongly \ split}},
2^2, 2^3,\dots), \]
\[ (1,\underbrace{1}_{{\rm weakly \ grows}},
\underbrace{2, \dots 2}_{{\rm strongly \ split}}, 2^2, 2^3,\dots),
\]
$$
(1,\underbrace{2,\dots, 2}_{{\rm strongly \ split}}, 2^2,
2^3,\dots), $$
$$ (1,\underbrace{2}_{{\rm strongly \ grows}}, 2^2,
2^3,\dots),
$$
\[
(2,\underbrace{2,\dots, 2}_{{\rm strongly \ split}}, 2^2,
2^3,\dots),
\]
$$ (2,\underbrace{2}_{{\rm strongly \ grows}}, 2^2,
2^3,\dots),
$$
\[
(2,\underbrace{2}_{{\rm weakly \ splits}},\underbrace{2}_{{\rm
weakly \ grows}}, \underbrace{2^2,\dots,2^2}_{{\rm strongly \
split}}, 2^3,\dots),
\]
$$ (2,\underbrace{2}_{{\rm weakly \ grows}},
\underbrace{2^2,\dots,2^2}_{{\rm strongly \ split}}, 2^3,\dots),
$$
$$
(2,\underbrace{2^2,\dots, 2^2}_{{\rm strongly \ split}}, 2^3,\dots),
$$
$$
(2,\underbrace{2^2}_{{\rm strongly \ grows}}, 2^3,\dots).
$$
In any of these cases,  the system $(E, f)$ is conjugate to
$(\mathbb{Z}_2, x+1) $.
\end{proof}

\setcounter{equation}{0}

\section{$2$-adic Quadratic Polynomials}\label{2-adic}
In this section, we undertake a full investigation on the minimal
decomposition of $2$-adic quadratic polynomial systems on
$\mathbb{Z}_2$ of the form:
$$f(x):=ax^2+bx+c \ (a,b,c \in \mathbb{Z}_2, a\neq 0) .$$

As we shall see, the system $f(x) = ax^2 +b x +c$ is conjugate to
one of the following quadratic polynomials $$ x^2-\lambda, \quad
x^2+bx, \quad x^2+x-d$$ where $\lambda \in \mathbb{Z}_2$, $b\equiv
1\ ({\rm mod} \ 2)$ and $\sqrt{d} \not\in \mathbb{Z}_2$.

Let us state our results on the minimal decomposition of
$(\mathbb{Z}_p, f)$. The proofs are postponed at the end of  this
section. By the way, we shall discuss the behavior of $f$ on the
field $\mathbb{Q}_p$.

\smallskip
If $a \equiv 1 \ ({\rm mod} \ 2)  $, then $\lim_{n\to \infty}
|f^n(x)|= \infty $ for any $ x\not \in \mathbb{Z}_2$. An elementary
calculation shows that $ax^2 + b x +c$ on $\mathbb{Z}_2 $ is
conjugate to $x^2+bx+c $ on $\mathbb{Z}_2 $ through the conjugacy $x
\mapsto ax$. If $a \equiv 0 \ ({\rm mod} \ 2) $, then $\lim_{n\to
\infty} |f^n(x)|= \infty $ for any $ x\not \in
\frac{1}{a}\mathbb{Z}_2$ and $ax^2+bx+c $ on
$\frac{1}{a}\mathbb{Z}_2 $ is conjugate to $x^2+ax+ac $ on
$\mathbb{Z}_2 $ through the conjugacy $x \mapsto ax $. Thus without
loss of generality, we need only to consider the quadratic
polynomials of the form
$$
x^2+bx+c \ (b,c \in \mathbb{Z}_2).
$$
We distinguish two cases according to $b \equiv 0 \ ({\rm mod} \ 2)$
or $b \equiv 1 \ ({\rm mod} \ 2)$

If $b \equiv 0 \ ({\rm mod} \ 2)   $, $x^2+bx+c $ is conjugate to
$$x^2-\lambda $$
 with $\lambda= \frac{b^2-4c-2b}{4} $, through the conjugacy
$x\mapsto x+\frac{B}{2}$.

\begin{thm}\label{x^2-lambda}
Consider the polynomial $f(x)=x^2-\lambda$ on $\mathbb{Z}_2$.\\
 \indent {\rm 1)} If $\lambda \equiv 0 \ ({\rm mod} \ 4) $, then $f$
 admits
 two attracting fixed points, one in  $4\mathbb{Z}_2 $ with  $2\mathbb{Z}_2 $ as its attraction basin, and the other one in $1+
 4\mathbb{Z}_2$ with $1+2\mathbb{Z}_2$ as its attraction basin.\\
 \indent {\rm 2)} If $\lambda \equiv 1 \ ({\rm mod} \ 4) $, then the whole
 $\mathbb{Z}_2 $ is attracted into a periodic orbit of period $2$ with one orbit point in  $4\mathbb{Z}_2 $
 and the other one in $3+
 4\mathbb{Z}_2$.\\
 \indent {\rm 3)} If $\lambda \equiv 2 \ ({\rm mod} \ 4) $, then $f$
 admits
 two attracting fixed points, one in  $2+4\mathbb{Z}_2 $  $2\mathbb{Z}_2 $ as its attraction basin, and the other one
 in $3+
 4\mathbb{Z}_2$ with $1+2\mathbb{Z}_2 $ as its attraction basin.\\
 \indent {\rm 4)} If $\lambda \equiv 3 \ ({\rm mod} \ 4) $, then the whole
 $\mathbb{Z}_2 $ is attracted into a periodic orbit of period $2$ with one orbit point in  $1+4\mathbb{Z}_2 $
  and the other one in $2+
 4\mathbb{Z}_2$.
\end{thm}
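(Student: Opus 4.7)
\textbf{Proof proposal for Theorem \ref{x^2-lambda}.}

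The plan is to determine the decomposition by analyzing the induced map $f_2\colon \mathbb{Z}/4\mathbb{Z}\to \mathbb{Z}/4\mathbb{Z}$, coupled with the simple observation that, because $f(x)=x^2-\lambda$ has derivative $f'(x)=2x$, every cycle $\sigma=(x_1,\ldots,x_k)$ of every $f_n$ ($n\geq 1$) satisfies
\[
a_n(x_1)\;=\;\prod_{j=0}^{k-1} f'(f^j(x_1))\;=\;2^{k}\prod_{j=1}^{k} x_j\;\equiv\; 0\pmod{2}.
\]
Thus every cycle falls into the growing-tails case of Section \ref{top-2-adic}, and by situation (S1) of Section \ref{formation} the clopen set $\mathbb{X}=\bigsqcup_{i}(x_i+2^n\mathbb{Z}_2)$ contains a unique $k$-periodic orbit of $f$ attracting all of $\mathbb{X}$. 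In particular no minimal component $B_i$ of Theorem \ref{thm-decomposition} occurs for these polynomials, and part 3) of Theorem \ref{possible-periods} together with part 4) (no $4$-periodic orbit for quadratic polynomials) reduces everything to cycles of $f_2$ of length $1$ or $2$.

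The rest is a short, four-case tabulation of $f_2(x)=x^2-\lambda$ on $\{0,1,2,3\}$ using $x^2\bmod 4\in\{0,1\}$ and $9\equiv 1\pmod 4$: for $\lambda\equiv 0$ one finds fixed points $\{0\}$ and $\{1\}$ with tail arrows $2\to 0$, $3\to 1$; for $\lambda\equiv 1$ the $2$-cycle $(0,3)$ with tails $1\to 0$, $2\to 3$; for $\lambda\equiv 2$ fixed points $\{2\}$ and $\{3\}$ with tails $0\to 2$, $1\to 3$; for $\lambda\equiv 3$ the $2$-cycle $(1,2)$ with tails $0\to 1$, $3\to 2$. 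Applying (S1) in each case, the unique periodic point lies in the ball indexed by the cycle, and the balls indexed by the tail preimages are swept into the same basin, producing exactly the two attracting objects with total basins $2\mathbb{Z}_2$ and $1+2\mathbb{Z}_2$ as claimed.

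The only conceptual step beyond the table is the persistence of a single periodic orbit through all deeper levels. This follows at once from formula (\ref{an}) of Lemma \ref{anbn}: for the unique lift ($r=1$) of a growing-tails cycle, $a_{n+1}\equiv a_n\pmod{2^n}$, so $a_m\equiv 0\pmod 2$ for every $m\geq 2$ and the grow-tails property is self-perpetuating. Hence the periodic point produced by (S1) at level $2$ is truly unique inside its level-$2$ ball, and no Hensel-type existence argument is needed. The main obstacle is therefore not analytic but purely notational---keeping straight, across the four parity cases, which of the two balls in each basin harbors the periodic point---and this is light enough to dispatch essentially by inspection.
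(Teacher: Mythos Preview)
Your proof is correct and follows essentially the same route as the paper's: compute the induced map $f_2$ on $\mathbb{Z}/4\mathbb{Z}$, observe that every cycle has $a_n\equiv 0\pmod 2$ (hence grows tails), and read off the attracting periodic orbits and their basins from the level-$2$ diagram. Your unifying remark that $f'(x)=2x$ forces $a_n\equiv 0\pmod 2$ for \emph{every} cycle at \emph{every} level is a mild streamlining---the paper checks $a_2\equiv 0$ separately in each case---but the idea is identical.

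One small caution: your appeal to part 4) of Theorem~\ref{possible-periods} (no $4$-periodic orbits for quadratics) is circular, since the paper's proof of that statement explicitly refers forward to the results of Section~\ref{2-adic}, including the present theorem. Fortunately you do not need it: your own tabulation of $f_2$ already exhibits all cycles at level~$2$ as having length $1$ or $2$, and since growing-tails lifts preserve cycle length, no period-$4$ orbit can arise. Simply drop the reference.
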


\medskip

If $b \equiv 1 \ ({\rm mod} \ 2)  $, then $x^2+bx+c $ is conjugate
to $$ x^2+x-d $$ where $d = \frac{(b-1)^2-4c}{4}\in \mathbb{Z}_2$,
through $x\mapsto x+ \frac{b-1}{2} $.  It is clear that $x^2 +x -d$
admits fixed points if and if only $\sqrt{d}\in \mathbb{Z}_2$. Thus
we need to study the case $x^2+x-d $ with $\sqrt{d} \in \mathbb{Z}_2
$ and the case $x^2+x-d $ with $ d\in \mathbb{Z}_2$ but $\sqrt{d}
\not\in \mathbb{Z}_2 $.

If $\sqrt{d} \in \mathbb{Z}_2$ (i.e. $x^2+x-d $ has a fixed point),
then $x^2+x-d$ conjugates to $$x^2+bx $$
 with $b=1-2\sqrt{d} $, through $x\mapsto x + \sqrt{d}$.

 If $b=1$, the minimal decomposition of
$x^2+x $ is as follows.

\begin{thm}\label{x^2+x} Consider the polynomial $f(x) =
x^2+x$ on $\mathbb{Z}_2[x]$.  There is  one fixed point $0$. We have
 $f(1+2\mathbb{Z}_2) \subset 2\mathbb{Z}_2$ and we can decompose $2\mathbb{Z}_2$
 into
\[2\mathbb{Z}_2=\{0\}\bigsqcup
\left(\bigsqcup_{n\geq 2} 2^{n-1} +2^n\mathbb{Z}_2 \right). \] Each
$2^{n-1} +2^n\mathbb{Z}_2$ ($n\geq 2$) consists of $2^{n-2}$ pieces
of minimal components:
\[2^{n-1} +t2^n +2^{2n-2}\mathbb{Z}_2, \quad t=0, \dots , 2^{n-2}-1.\]
\end{thm}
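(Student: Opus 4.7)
The plan is to exploit the factorization $f(x)=x(x+1)$. First I would verify the preliminaries: $f(x)=x$ is equivalent to $x^{2}=0$, so $0$ is the unique fixed point; for $x\in 1+2\mathbb{Z}_2$ the factor $x+1$ is even, hence $f(x)\in 2\mathbb{Z}_2$. The decomposition $2\mathbb{Z}_2=\{0\}\sqcup\bigsqcup_{n\geq 2}(2^{n-1}+2^n\mathbb{Z}_2)$ is simply the partition of $2\mathbb{Z}_2\setminus\{0\}$ by $2$-adic valuation, and each $E_n:=2^{n-1}+2^n\mathbb{Z}_2$ is $f$-invariant because $v_2(f(x))=v_2(x)+v_2(x+1)=v_2(x)$ on $E_n$.

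The crucial observation is that for $x=2^{n-1}u$ with $u\in\mathbb{Z}_2^{\times}$,
$$f(x)-x \;=\; x^{2} \;=\; 2^{2n-2}u^{2},$$
so $f(x)\equiv x\pmod{2^{2n-2}}$ and $f(x)\equiv x+2^{2n-2}\pmod{2^{2n-1}}$. The first congruence shows that every ball $C_t:=2^{n-1}+t\,2^n+2^{2n-2}\mathbb{Z}_2$ with $t=0,\dots,2^{n-2}-1$ is $f$-invariant; moreover these $2^{n-2}$ balls are precisely the cosets of $2^{2n-2}\mathbb{Z}_2$ lying inside $E_n$, so they partition $E_n$ as claimed.

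To finish I would prove that each $f\colon C_t\to C_t$ is minimal. By Theorem \ref{minimal-part-to-whole} it suffices to check that $f_m$ is transitive on $C_t/2^m\mathbb{Z}_2$ for every $m$; for $m\leq 2n-2$ the quotient is a singleton, so nothing is required. For $m=2n-1$ the second congruence above identifies $C_t/2^{2n-1}\mathbb{Z}_2$ with a genuine $f_{2n-1}$-cycle $\sigma=(x,\,x+2^{2n-2})$ of length $2$, where $x=2^{n-1}+t\,2^n$. Using the language of Section \ref{top-2-adic} I would then compute the invariants of $\sigma$: because $x$ and $f(x)$ are both divisible by $2^{n-1}$ and $n\geq 2$, the derivatives $f'(x)=2x+1$ and $f'(f(x))=2f(x)+1$ are each $\equiv 1\pmod 4$, so $a_{2n-1}\equiv 1\pmod 4$; and the explicit expansion
$$f^{2}(x)-x \;=\; 2x^{2}(x+1)+x^{4} \;=\; 2^{2n-1}u^{2}\bigl(1+2^{n-1}u+2^{2n-3}u^{2}\bigr)$$
gives $b_{2n-1}\equiv u^{2}\equiv 1\pmod 2$. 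Thus $\sigma$ is strongly growing, and Proposition \ref{grows} applied repeatedly guarantees that every lift is again strongly growing, so the cycle length doubles at each further level. Since $|C_t/2^m\mathbb{Z}_2|=2^{m-(2n-2)}$ for $m\geq 2n-1$, this single cycle fills the whole quotient, giving the required transitivity. The identity $f(x)-x=x^{2}$ does essentially all the work; the only delicate point is bookkeeping the cycle lengths at level $m$ against the ball sizes $|C_t/2^m\mathbb{Z}_2|$.
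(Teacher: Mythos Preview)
Your argument is correct and reaches the same conclusion, but it is organized differently from the paper's proof. The paper works with the $1$-cycle $(2^{n-1})$ at level $n$: it computes $a_n(2^{n-1})=2^n+1$ and $b_n(2^{n-1})=2^{n-2}$, so $A_n=n$ and $B_n=n-2$, and then invokes Case~1 of Proposition~\ref{stronglysplits} to conclude that the cycle splits $n-2$ times before its lifts grow forever. You instead exploit the identity $f(x)-x=x^2$ from the start, which immediately gives the invariance of each $C_t$ and lets you skip the splitting analysis entirely; you then land directly at level $2n-1$, verify that the $2$-cycle there is strongly growing, and appeal to Proposition~\ref{grows}. Your route is more explicit and self-contained for this particular polynomial (all the work is done by the single relation $f(x)-x=x^2$), while the paper's route illustrates how the general machinery of Proposition~\ref{stronglysplits} handles the bookkeeping automatically. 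The two computations are of course consistent: the paper's $B_n=n-2$ predicts growth starting at level $n+B_n=2n-2$, exactly where your $C_t$ first becomes a single residue class.
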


%\begin{rem}
%  By elementary calculation, quadratic polynomial $f$ conjugates to
%  $x^2+x$ if and only if $f$ has one and only one fixed point.
%\end{rem}
\medskip

Denote $\mathbb{N}^*=\mathbb{N}\setminus \{0\}$. If $b\equiv 1 \
({\rm mod} \ 2)$ but $b\neq 1$, we distinguish four subcases:
\begin{itemize}
  \item $b=1-4m $, $m\in \mathbb{Z}_2\setminus \{0\} $;
  \item $b=-1-4m $, $m\in \mathbb{Z}_2 $ with $v_2(m) \in 1+2 \mathbb{N}$;
  \item $b=-1-4m $, $m\in \mathbb{Z}_2 $ with $v_2(m) \in 2 \mathbb{N}^*$;
  \item $b=-1-4m $, $m\in \mathbb{Z}_2 $ with $v_2(m)=0$.
\end{itemize}

If $f(x)=x^2+(-1-4m)x$ with $v_2(m)=0$, then $f$ is conjugate to
$g(x)=x^2+(-1-4(-m-1))x$ with $v_2(-m-1)=v_2(m+1)\geq 1$ through
$x\mapsto x-4m-2 $. Thus the last case is reduces to the second and
the third case. So we need only consider the first three cases.

Before the statement of the following results, we would like to give
some terminology to simplify our statements.

We say a $1$-cycle $(x)$ at level $n$ is of {\em type {\rm I-$[k]$}}
if it splits $k$ times then its lifts grow forever. In this case,
the ball $x+p^n\mathbb{Z}_p$ is decomposed into $p^k$ pieces of
minimal components. Such a component is a ball of radius $p^{-n-k}$.
Sometimes the ball $x+p^n\mathbb{Z}_p$ is said to be of type {\rm
I-$[k]$}.

We say the a $2$-cycle $(x,y)$  at level $n$ is of {\em type {\rm
II-$[k]$}} if it splits $k$ times then its lifts grow forever. In
this case, the union of two balls $(x+p^n\mathbb{Z}_p)\cup
(y+p^n\mathbb{Z}_p)$ is decomposed into $p^k$ pieces of minimal
components. Such a component is a union of two balls of radius
$p^{-n-k}$. The union $(x+p^n\mathbb{Z}_p)\cup (y+p^n\mathbb{Z}_p)$
is sometimes said to be of type {\rm II-$[k]$}. Remark that the
union $(x+p^n\mathbb{Z}_p)\cup (y+p^n\mathbb{Z}_p)$ may be a ball of
radius $p^{-n+1}$.

If an invariant subset $E \subset \mathbb{Z}_p$ is a union of
invariant subsets $F_n \subset \mathbb{Z}_p, \ n\in J\subset
\mathbb{N}$ where each  $F_n$ is of type I-$[k]$, we will denote
it as
\begin{eqnarray*}
  E=\bigsqcup_{n\in J} F_n -\{{\textrm{I-$[k]$}}\}.
\end{eqnarray*}
Similarly, if each $F_n \subset \mathbb{Z}_p, \ n\in J\subset
\mathbb{N}$ where each $F_n$ is a union of two balls of type
II-$[k]$, we will denote it as
\begin{eqnarray*}
  E=\bigsqcup_{n\in J} F_n -\{{\textrm{II-$[k]$}}\}.
\end{eqnarray*}

Now we are ready to state the following theorems.
\begin{thm}\label{x^2+(1-4m)x}
Consider $f(x)=x^2+(1-4m)x$ with $m\in \mathbb{Z}_2\setminus \{0\}$.
Then $f$ admits two fixed points $0$ and $4m $, and
$f(1+2\mathbb{Z}_2 ) \subset 2\mathbb{Z}_2 $. We can decompose
$2\mathbb{Z}_2$ as
\[
2\mathbb{Z}_2= \{0,4m\}\bigsqcup E_1
 \bigsqcup E_2 \bigsqcup E_3,
\]
where
\begin{align*}
  E_1&= \bigsqcup_{2\leq n < v_2(m)+3 } \left(2^{n-1}+2^n\mathbb{Z}_2\right) -\{{\textrm{I-$[n-2]$}}\},\\
  E_2&= \bigsqcup_{n > v_2(m)+3} \left(2^{n-1}+2^n\mathbb{Z}_2\right) -\{{\textrm{I-$[v_2(m)+1]$}}\},\\
  E_3&= \bigsqcup_{n > v(m)+3} \left(4m+2^{n-1}+2^n\mathbb{Z}_2\right) -\{{\textrm{I-$[v_2(m)+1]$}}\}.
\end{align*}
%
%where $E_1= \bigsqcup_{2\leq n < v_2(m)+3 } 2^{n-1}+2^n\mathbb{Z}_2$ with each disjoint part be of type I-$(n-2)$;
%$E_2= \bigsqcup_{n > v_2(m)+3} 2^{n-1}+2^n\mathbb{Z}_2$ with each disjoint part be of type
% I-$(v_2(m)+1)$; and $E_3= \bigsqcup_{n > v(m)+3} 4m+2^{n-1}+2^n\mathbb{Z}_2$ with each disjoint part be of type I-$(v_2(m)+1)$.
\end{thm}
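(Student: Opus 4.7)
The plan is to reduce the whole system to the invariant set $2\mathbb{Z}_2$ and then apply the machinery of Section~\ref{top-2-adic} cycle by cycle, tracking the two fixed-point cycles $\{0\}$ and $\{4m\}$ together with their ``sibling'' lifts at every level; throughout I set $v := v_2(m)$. First I would observe that $f(x) = x(x+1-4m)$ is always even, so $f(\mathbb{Z}_2)\subset 2\mathbb{Z}_2$, and hence $1+2\mathbb{Z}_2$ is mapped into $2\mathbb{Z}_2$ after one iteration; solving $x(x-4m)=0$ identifies $0$ and $4m$ as the only fixed points, both of which lie in $2\mathbb{Z}_2$. From level $n=2$ onwards the behavior of $f_n$ on the even residues is then controlled by Propositions~\ref{grows}--\ref{weaklysplits}.

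Next I would compute the invariants for the two fixed-point cycles. For $\{0\}$ one has $a_n = 1-4m$ and $b_n = 0$, hence $A_n = v+2$ and $B_n = +\infty$; Proposition~\ref{stronglysplits} therefore puts this cycle in Case~3 for $2 \le n \le v+2$ (every lift strongly splits) and in Case~2 for $n \ge v+3$ (one lift contains $0$ and behaves identically, the other splits $v+1$ times and then strongly grows forever). The same computation applied to $\{4m \bmod 2^n\}$ gives $A_n = v+2$ and $B_n = +\infty$, so at each level $n \ge v+3$ its non-$4m$ sibling also splits $v+1$ times and then grows. These peeled-off siblings at levels $n \ge v+4$ are precisely the balls $2^{n-1}+2^n\mathbb{Z}_2$ and $4m+2^{n-1}+2^n\mathbb{Z}_2$, and their type-I-$[v+1]$ structure yields $E_2$ and $E_3$.

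The remaining decomposition $E_1$ comes from the sibling cycles $\{2^{n-1}\}$ at levels $2 \le n \le v+2$, which are the ``other lifts'' generated during the Case-3 splittings of $\{0\}$. A direct computation gives
\begin{equation*}
a_n(2^{n-1}) = 2^n + 1 - 4m, \qquad b_n(2^{n-1}) = 2^{n-2} - 2m,
\end{equation*}
and hence $B_n = n-2$, $A_n \ge n$ for $2 \le n \le v+2$. When $n \ge 3$ this places the cycle in Case~1 of Proposition~\ref{stronglysplits}, while $n=2$ gives a strongly growing cycle by Proposition~\ref{grows}; in either event $\{2^{n-1}\}$ is of type I-$[n-2]$, producing exactly the $E_1$ components. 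One can also recheck from this side that for $n \ge v+4$ the same formulas give $B_n = v+1$ and $A_n = v+2$, reproducing independently the I-$[v+1]$ description of the $E_2$ balls.

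The main obstacle I expect is the critical level $n = v+3$, at which the sibling ball $2^{v+2}+2^{v+3}\mathbb{Z}_2$ coincides with the ball $4m+2^{v+3}\mathbb{Z}_2$ around the second fixed point, so the two families of computations collide. I would handle this level separately: a careful computation shows $A_{v+3}(2^{v+2})=v+2$ and $B_{v+3}(2^{v+2}) \ge v+2$, which places the cycle directly in Case~2, whose two lifts are the cycle containing $4m$ (continuing to split forever) and the cycle around $4m+2^{v+3}$ (which launches the $E_3$ tower starting at level $v+4$). Combined with the identification $2^{v+2}+2^{v+3}\mathbb{Z}_2 = \{4m\} \sqcup \bigsqcup_{n > v+3}(4m+2^{n-1}+2^n\mathbb{Z}_2)$, this closes the gap at $n=v+3$ and yields the claimed partition $2\mathbb{Z}_2 = \{0,4m\}\sqcup E_1\sqcup E_2\sqcup E_3$.
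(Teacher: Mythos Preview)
Your proposal is correct and follows essentially the same approach as the paper: both arguments reduce to directly computing $a_n(2^{n-1})=2^n+1-4m$ and $b_n(2^{n-1})=2^{n-2}-2m$ for the sibling $1$-cycles and invoking Proposition~\ref{stronglysplits}. The paper treats the $E_3$ balls by an analogous direct computation at $s_n\equiv 4m+2^{n-1}\pmod{2^n}$, whereas you obtain them via the Case~2 analysis of the fixed-point cycle $\{4m\}$; your explicit handling of the transitional level $n=v_2(m)+3$ (where $2^{v+2}+2^{v+3}\mathbb{Z}_2$ coincides with the ball around $4m$) makes the partition a bit cleaner than the paper's proof, which simply omits that value of $n$ from the $E_1$/$E_2$ ranges and recovers it implicitly through $E_3$.
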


\begin{thm}\label{x^2+(1-4m)x-v_2(m)-odd}
Consider $f(x)=x^2+(-1-4m)x$ with $v_2(m) \in 1+2\mathbb{N}$. Then
 $f$ admits two fixed points $0$ and
$4m+2$, and $f(1+2\mathbb{Z}_2) \subset 2\mathbb{Z}_2 $. We can
decompose $2\mathbb{Z}_2$ as
\[
2\mathbb{Z}_2= \{0,4m+2\}\bigsqcup E_1
 \bigsqcup E_2 \bigsqcup E_3,
\]
where
\begin{align*}
  E_1&= \bigsqcup_{n \geq 4} \left(4m+2+2^{n-2}+2^{n-1}\mathbb{Z}_2\right) -\{{\rm\text{II-$[1]$}}\},\\
  E_2&= \bigsqcup_{4\leq n \leq \lfloor {v_2(m)}/{2} \rfloor +3 }
  \left(2^{n-2}+2^{n-1}\mathbb{Z}_2\right) -\{{\textrm{II-$[2n-5]$}}\},\\
  E_3&= \bigsqcup_{ n > \lfloor {v_2(m)}/{2} \rfloor+3} \left(2^{n-2}+2^{n-1}\mathbb{Z}_2\right) -\{{\textrm{II-$[v_2(m)+1]$}}\}.
\end{align*}
\end{thm}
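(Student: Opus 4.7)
The plan is to mirror the strategy already used for Theorem~\ref{x^2+x}: analyse the induced systems $f_n$ on $\mathbb{Z}/2^n\mathbb{Z}$ level by level, identify the $2$-cycles living in the annular decomposition around the two fixed points, compute the pair $(A_n,B_n)$ for each such cycle, and invoke Proposition~\ref{stronglysplits} to read off the splitting pattern. As a warm-up, writing $\beta=-1-4m$, the equation $f(x)=x$ factors as $x(x-(4m+2))=0$, so the fixed points are exactly $0$ and $4m+2$; the parity check $f(\text{odd})\equiv 0\pmod 2$ gives $f(1+2\mathbb{Z}_2)\subset 2\mathbb{Z}_2$; and any hypothetical $2$-cycle $(x,y)$ would satisfy $x+y=4m$ and $xy=-4m$, so that $x,y$ would be roots of $t^2-4mt-4m=0$ in $\mathbb{Z}_2$. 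The discriminant $16m(m+1)$ has $2$-adic valuation $2+v_2(m)$, odd under our hypothesis, hence is not a square in $\mathbb{Z}_2$; no $2$-cycles exist. The derivatives $f'(0)=\beta$ and $f'(4m+2)=4m+3$ are units, so both fixed points are indifferent and Corollary~\ref{thm-neutral-fixed-points} guarantees that infinitely many minimal components accumulate at each of them.

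The core of the proof is the identification of the $2$-cycle of $f_n$ in each annulus and the computation of $(A_n,B_n)$. A direct substitution shows that for $n\geq 4$ the residues $2^{n-2}$ and $3\cdot 2^{n-2}$ form a $2$-cycle of $f_n$ inside $2^{n-2}+2^{n-1}\mathbb{Z}_2$ (use $\beta\equiv 3\pmod 4$ together with $2^{2n-4},\,2^n m\equiv 0\pmod{2^n}$), and after the conjugation $y=x-(4m+2)$, which turns $f$ into $F(y)=y^2+\mu y$ with $\mu=4m+3$, the same calculation produces a $2$-cycle of $f_n$ in the annulus $4m+2+2^{n-2}+2^{n-1}\mathbb{Z}_2$. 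Since $a_n\equiv\beta^2\equiv\mu^2\equiv 1\pmod 4$, both cycles are strongly splitting in the sense of Section~\ref{top-2-adic}, so Proposition~\ref{stronglysplits} applies. Expanding $f^2(x)-x=x(x+\beta)(x^2+\beta x+\beta)-x$ at $x=2^{n-2}$ and dividing by $2^n$ writes $b_n$ as a sum of four terms of $2$-adic valuations $3n-8$, $2n-5$, $n-2+v_2(\beta+1)$, and $v_2(\beta^2-1)-2$ (with the analogous list for the $\mu$-cycle). Near $4m+2$ one has $\mu^2-1=8(m+1)(2m+1)$ of valuation $3$ (since $v_2(m+1)=0$), which is the unique minimum for every $n\geq 4$; hence $B_n=1$ uniformly, and Case~1 of Proposition~\ref{stronglysplits} yields type II-$[1]$, which is exactly $E_1$. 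Near $0$ one has $\beta^2-1=8m(1+2m)$ of valuation $3+v_2(m)$, competing with the term of valuation $2n-5$ coming from $2^{2n-3}\beta$; they balance at $n=(v_2(m)+6)/2$, which for odd $v_2(m)$ is not an integer and lies strictly between $\lfloor v_2(m)/2\rfloor+3$ and $\lfloor v_2(m)/2\rfloor+4$. Hence $B_n=2n-5$ for $4\leq n\leq\lfloor v_2(m)/2\rfloor+3$ and $B_n=v_2(m)+1$ for $n>\lfloor v_2(m)/2\rfloor+3$. A parallel expansion of $a_n-1$ gives $A_n\geq\min(n+1,\,3+v_2(m))>B_n$ in both regimes, so Case~1 of Proposition~\ref{stronglysplits} delivers respectively types II-$[2n-5]$ and II-$[v_2(m)+1]$, matching $E_2$ and $E_3$.

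The main obstacle will be the $2$-adic bookkeeping in the expansion of $f^2(x)-x$: I must check that among the four competing terms the two listed above really dominate and that no pair of them cancels in a way that lowers $v_2(b_n)$. The hypothesis that $v_2(m)$ is odd enters precisely here: it prevents the delicate coincidence $2n-5=v_2(m)+1$ and so rules out any ``degenerate'' intermediate level between the $E_2$ and $E_3$ regimes. Once the pairs $(A_n,B_n)$ have been pinned down, the final assembly is set-theoretic: for $n\geq 4$ the annuli $2^{n-2}+2^{n-1}\mathbb{Z}_2$ and $4m+2+2^{n-2}+2^{n-1}\mathbb{Z}_2$ partition $4\mathbb{Z}_2\setminus\{0\}$ and $(2+4\mathbb{Z}_2)\setminus\{4m+2\}$ respectively, whose union is $2\mathbb{Z}_2\setminus\{0,4m+2\}$; and within each annulus the II-$[k]$ classification established above produces exactly the $2^k$ minimal components announced in the statement.
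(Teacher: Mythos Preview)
Your overall strategy mirrors the paper's, and your computations of $B_n$ are correct, but there is a genuine gap in the $E_2$ regime. You invoke Case~1 of Proposition~\ref{stronglysplits}, whose hypothesis is $\min\{A_n,n\}>B_n$, not merely $A_n>B_n$. In the $E_2$ range with $n\ge 5$ you have $B_n=2n-5\ge n$, so the hypothesis of Case~1 fails and you are actually in Case~3, which only says the cycle splits at least $n-1$ times---it neither pins down the exact number of splits nor guarantees eventual growth. (Your stated inequality $\min(n+1,\,3+v_2(m))>B_n$ also breaks down here: $\min(a,b)>c$ requires \emph{both} $a>c$ and $b>c$, and $n+1>2n-5$ forces $n<6$.)

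The paper confronts exactly this obstacle. After observing that in the $E_2$ range one has $A_n>B_n\ge n$ and that Proposition~\ref{stronglysplits} is therefore ``not sufficient for us'', it computes $f^2(x)-x$ at a \emph{generic} point $x=2^{n-2}+t\cdot 2^{n-1}$ of the annulus (not just at the base point $2^{n-2}$), factors it as $2^{n+1}(1+2t)\Theta$ with $v_2(\Theta)=2n-6$, and reads off $v_2\bigl(f^2(x)-x\bigr)=3n-5$ uniformly in $t$. This shows that the $2$-cycle at level $n$ splits down to level $3n-5$ (that is, $2n-5$ times) and then grows; since this cycle already arose from a growth (the weakly growing $1$-cycle $(2^{n-2})$ at level $n-1$), Corollary~\ref{grow-two-times} guarantees all subsequent lifts grow forever. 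This uniform-in-$t$ valuation computation, together with the appeal to Corollary~\ref{grow-two-times}, is the essential step missing from your argument.
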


\begin{thm}\label{x^2+(1-4m)x-v_2(m)-even}
Consider $f(x)=x^2+(-1-4m)x$ with $v_2(m) \in 2\mathbb{N}^*$. Then
$f$ admits fixed points $0$ and $4m+2 $, and
$f(1+2\mathbb{Z}_2)\subset 2\mathbb{Z}_2 $. The invariant set
$2\mathbb{Z}_2$ admits the following form
\[
2\mathbb{Z}_2= \{0,4m+2\}\bigsqcup E_1
 \bigsqcup E_2 \bigsqcup E_3 \bigsqcup \left(2^{v_2(m)/2+1}+2^{v_2(m)/2+2}\mathbb{Z}_2 \right),
\]
where
\begin{align*}
  E_1&= \bigsqcup_{n \geq 4} \left(4m+2+2^{n-2}+2^{n-1}\mathbb{Z}_2\right) -\{{\rm\text{II-$[1]$}}\},\\
  E_2&= \bigsqcup_{4\leq n < {v_2(m)}/{2} +3 }
  \left(2^{n-2}+2^{n-1}\mathbb{Z}_2\right) -\{{\textrm{II-$[2n-5]$}}\},\\
  E_3&= \bigsqcup_{ n > {v_2(m)}/{2}+3} \left(2^{n-2}+2^{n-1}\mathbb{Z}_2\right) -\{{\textrm{II-$[v_2(m)+1]$}}\}.
\end{align*}
Denote $E= 2^{v_2(m)/2+1}+2^{v_2(m)/2+2}\mathbb{Z}_2$.
\begin{enumerate}
  \item If $v_2(m)=2 $ and $v_2(m-4)=3 $, then $E$ is of type {\rm
  II-$[4]$}.

\item If $v_2(m)=2 $ and $v_2(m-4)\geq 5 $, then $E$ is of type
{\rm{II-$[5]$}}.

\item If $v_2(m)=2 $ and $v_2(m-4)=4 $, then there exists a $2$-periodic orbit with one point
  $x_1\in 4+16\mathbb{Z}_2 $ and the other $x_2 \in 12+16\mathbb{Z}_2 $; and we can decompose $E$ as
$ E= \{ x_1, x_2\}\bigsqcup E_4$, where $$E_4=\bigsqcup_{k \geq 5}
\left( (x_1 + 2^{k-1}+ 2^k \mathbb{Z}_2)\cup (x_2 + 2^{k-1}+ 2^k
\mathbb{Z}_2)\right) -\{{\textrm{II-$[5]$}}\}. $$

\item If $v_2(m)\geq 4 $ and $v_2(m-2^{v_2(m)}) < v_2(m)+3 $, then
$E$ is of type {\rm{II-$[v_2(m-2^{v_2(m)})+1]$}}.

\item If $v_2(m)\geq 4 $ and $v_2(m-2^{v_2(m)}) \geq v_2(m)+3 $, then there exists  a $2$-periodic orbit with one point
  $x'_1\in 2^{{v_2(m)}/{2}+1}+2^{{v_2(m)}/{2}+3}\mathbb{Z}_2 $ and
  the other $x'_2 \in 2^{{v_2(m)}/{2}+1}+2^{{v_2(m)}/{2}+2}+2^{{v_2(m)}/{2}+3}\mathbb{Z}_2$;
  and we can decompose $E$ as
$ E= \{x'_1,x'_2\}\bigsqcup E'_4, $ where $$E'_4=\bigsqcup_{k \geq
v_2(m)/2+4} \left((x'_1 + 2^{k-1}+ 2^k \mathbb{Z}_2)\cup (x'_2 +
2^{k-1}+ 2^k \mathbb{Z}_2)\right) -\{{\textrm{II-$[v_2(m)+1]$}}\}.
$$
\end{enumerate}
\end{thm}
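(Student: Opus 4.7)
The plan is to apply the strategy of Section \ref{formation}: trace the cycles of the induced maps $f_n$ on $\mathbb{Z}/2^n\mathbb{Z}$ from level to level, using Propositions \ref{grows}, \ref{stronglysplits}, and \ref{weaklysplits} to decide whether each cycle grows forever (producing a minimal component) or continues to split (eventually producing a periodic orbit). The preliminary facts are immediate: setting $f(x)=x$ gives $x(x-2-4m)=0$, so the fixed points are $0$ and $4m+2$, both in $2\mathbb{Z}_2$; the factorization $f(x)=x(x-1-4m)$ shows that when $x$ is odd, $x-1-4m$ is even, so $f(1+2\mathbb{Z}_2)\subset 2\mathbb{Z}_2$. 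Moreover $f'(0)=-1-4m$ and $f'(4m+2)=4m+3$ are both $\equiv 3\pmod 4$, so at every level $n\ge 2$ the 1-cycles containing the fixed points are weakly splitting with $b_n=0$.

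Next I would carry out the main cycle bookkeeping. By Proposition \ref{weaklysplits}, each of these weakly splitting 1-cycles lifts to one fixed-point-containing weakly splitting cycle and one 1-cycle that weakly grows; by Proposition \ref{grows}, the weakly growing cycle lifts in turn to a strongly splitting 2-cycle at the next level. For each such 2-cycle I would compute $A_n=v_2(a_n-1)$ and $B_n=v_2(b_n)$ explicitly using Lemma \ref{anbn}, and then apply Proposition \ref{stronglysplits}. For the 2-cycles branching off from $4m+2$, a short computation should give $A_n>B_n=1$, placing them in Case 1 with a single split, which produces the \textrm{II-$[1]$} components of $E_1$. For the 2-cycles on the $0$ side at any level $n$ with $4\le n<v_2(m)/2+3$, I expect $A_n$ to grow linearly with $n$ while $B_n$ remains small, again Case 1 but with split count $2n-5$, producing $E_2$. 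Once $n>v_2(m)/2+3$ the arithmetic stabilizes and Case 1 applies uniformly with split count $v_2(m)+1$, producing $E_3$.

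The critical ball $E$ corresponds to the unique 2-cycle sitting at the transition level $v_2(m)/2+2$, where the general $(A_n,B_n)$ bookkeeping gives $A_n,B_n\ge n$ and Proposition \ref{stronglysplits} drops us into Case 3, leaving the number of further splits undetermined by the general result. To discriminate among the five sub-cases I would expand the second iterate $f^2$ to sufficient 2-adic precision on $E$, extract the valuation $v_2(m-2^{v_2(m)})$ (which specializes to $v_2(m-4)$ when $v_2(m)=2$), and then track whether the resulting next-level 2-cycle is strongly splitting and falls under Case 1 of Proposition \ref{stronglysplits} (yielding sub-cases (1), (2), (4) with more \textrm{II} minimal components), or whether it is itself weakly splitting, in which case Proposition \ref{weaklysplits} applies iteratively to produce the new 2-periodic orbit of sub-cases (3) and (5) together with the infinite cascade $E_4$ of \textrm{II}-components closing in on it.

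The main obstacle is precisely this critical-level analysis: the standard first-order machinery that worked for $E_1,E_2,E_3$ ties at level $v_2(m)/2+2$, so one must keep finer 2-adic terms in the Taylor expansion of $f^2$ to expose the delicate arithmetic distinction between the sub-cases. The sharpest situation is $v_2(m)=2$ with $v_2(m-4)=4$ (sub-case (3)), where one additional level of lifting is required before the weakly-splitting configuration, and hence the emerging 2-periodic orbit, becomes visible.
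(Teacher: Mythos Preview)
Your overall architecture matches the paper's, but two steps are mis-diagnosed in a way that would stall the argument.

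First, for $E_2$: your expectation that $B_n$ ``remains small'' and that Case~1 of Proposition~\ref{stronglysplits} applies is wrong. The paper's computation (carried over from the proof of Theorem~\ref{x^2+(1-4m)x-v_2(m)-odd}) gives $A_n(2^{n-2})=2n-3$ and $B_n(2^{n-2})=2n-5$ for $4\le n\le v_2(m)/2+3$. Both grow linearly, and for $n\ge 5$ one has $B_n\ge n$, so you land in Case~3, which gives no sharp conclusion. The paper therefore abandons the abstract proposition here and computes $f^2(2^{n-2}+t2^{n-1})-(2^{n-2}+t2^{n-1})$ directly (equation~(\ref{Theta})), reading off the exact valuation $3n-5$ and invoking Corollary~\ref{grow-two-times}. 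You need this direct computation already for $E_2$, not only for the critical ball.

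Second, and more seriously, your proposed dichotomy on the critical ball $E$ is incorrect. You suggest that in sub-cases (3) and (5) the relevant $2$-cycle is \emph{weakly} splitting and that Proposition~\ref{weaklysplits} iterates to produce the $2$-periodic orbit. But every $2$-cycle in this analysis arises from a growth step, and by Lemma~\ref{after-growth} any such lift has $a_{n+1}\equiv 1\pmod 4$; equivalently, $f'(y_1)f'(y_2)\equiv (-1)(-1)\equiv 1\pmod 4$ since both $y_i$ are even. So these $2$-cycles are always \emph{strongly} splitting, and Proposition~\ref{weaklysplits} is never invoked. The paper's route is different: it factors $f^2(x)-x=x(x-4m-2)(x^2-4mx-4m)$, determines algebraically when $x^2-4mx-4m=0$ has roots in $\mathbb{Z}_2$ (via the criterion for $\sqrt{m(m+1)}\in\mathbb{Z}_2$, which is exactly the condition $v_2(m-2^{v_2(m)})\ge v_2(m)+3$, specializing to $v_2(m-4)=4$ when $v_2(m)=2$), writes the periodic points $x_1,x_2$ explicitly, and then for each shell $(x_i+2^{k-1}+2^k\mathbb{Z}_2)$ computes $f^2-\mathrm{id}$ directly (equation~(\ref{calculation312})) to pin down the split count. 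The mechanism producing the periodic orbit, if you want to phrase it via Proposition~\ref{stronglysplits}, is Case~2 (one strongly splitting lift persists with $A_{n+1}\le B_{n+1}$), not weak splitting.
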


\medskip
Now we are left to study the polynomials $f(x)=x^2+x-d $ with $d\in
\mathbb{Z}_2 $ but $\sqrt{d} \not\in \mathbb{Z}_2 $.

We distinguish four cases.

\begin{thm}\label{x^2+x-d-0}
Consider $f(x)=x^2+x-d$ with $d\equiv 0 \ ({\rm mod} \ 4)$ and
$\sqrt{d}\not \in
 \mathbb{Z}_2$. Then $f(1+2\mathbb{Z}_2)\subset 2\mathbb{Z}_2$ and $2\mathbb{Z}_2$ is decomposed
 as finite number of minimal components.
Let $n_0=\lfloor {v_2(d)}/{2}\rfloor+1$.
  \begin{enumerate}
     \item If $v_2(d)=2 $ and $v_2(d-4)=3$,
   then $2\mathbb{Z}_2$ consists of three minimal components: $4\mathbb{Z}_2$, $2+8\mathbb{Z}_2$ and $6+8\mathbb{Z}_2$.

   \item If $v_2(d)=2 $ and $v_2(d-4)=4$, then $2\mathbb{Z}_2$ consists of five
               minimal components: $4\mathbb{Z}_2$,
               $2+16\mathbb{Z}_2$, $6+16\mathbb{Z}_2$,
               $10+16\mathbb{Z}_2$, and $14+16\mathbb{Z}_2$.

   \item If $v_2(d)\geq 3 $ and $v_2(d)$ is odd, then
   $
   2\mathbb{Z}_2=E_1\bigsqcup E_2,
   $
   where \begin{align*}
     E_1&=\bigsqcup_{2 \leq n \leq n_0} \left(2^{n-1}+2^n\mathbb{Z}_2\right) -\{{\textrm{I-$[n-2]$}}\}, \\
     E_2&=2^{n_0}\mathbb{Z}_2 -\{{\textrm{I-$[n_0-1]$}}\}.
   \end{align*}

   \item If $v_2(d)\geq 3 $ and $v_2(d)$ is even, then
   $
   2\mathbb{Z}_2=E'_1\bigsqcup E'_2 \bigsqcup E'_3,
   $
   where \begin{align*}
     &E'_1=\bigsqcup_{2 \leq n \leq n_0-1} \left(2^{n-1}+2^n\mathbb{Z}_2\right) -\{{\textrm{I-$[n-2]$}}\}, \\
     &E'_2=2^{n_0}\mathbb{Z}_2 -\{{\textrm{I-$[n_0-2]$}}\},\\
     &E'_3=2^{n_0-1}+2^{n_0} \mathbb{Z}_2 -\{{\textrm{I-$[v_2(d-2^{v_2(d)})-n_0]$}}\}.
   \end{align*}
  \end{enumerate}
\end{thm}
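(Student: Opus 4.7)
The proof proceeds by applying the induced-dynamics machinery of Sections \ref{top-2-adic}--\ref{formation} to $f(x)=x^2+x-d$. Since $x(x+1)$ is even and $d$ is even, $f(\mathbb{Z}_2)\subseteq 2\mathbb{Z}_2$, so $1+2\mathbb{Z}_2$ is absorbed into $2\mathbb{Z}_2$ and we only need to decompose $2\mathbb{Z}_2$. The hypothesis $\sqrt d\notin\mathbb{Z}_2$ excludes fixed points, so every cycle eventually grows into a minimal component. The crucial simplification is the exact identity $f(x)-x=x^2-d$, which yields, for any cycle $(x)$ of $f_n$,
\[ a_n(x)=2x+1, \qquad b_n(x)=(x^2-d)/2^n,\]
reducing the computation of $A_n(x)=v_2(2x)$ and $B_n(x)=v_2(x^2-d)-n$ to pure arithmetic.

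At level $n=2$ the only cycles in $2\mathbb{Z}_2$ are $(0)$ and $(2)$. When $v_2(d)=2$, cycle $(0)$ strongly grows (producing the minimal component $4\mathbb{Z}_2$) while $(2)$ strongly splits; when $v_2(d)\geq 3$ the roles reverse, giving the minimal component $2+4\mathbb{Z}_2$. For Cases (1)--(2) one then tracks the splitting cycle $(2)$ using $v_2(d-4)$: if $v_2(d-4)=3$ then $(A_2,B_2)=(2,1)$ and Proposition \ref{stronglysplits} Case~1 produces two components at level $3$; if $v_2(d-4)=4$ then $(A_2,B_2)=(2,2)$ (Case~3), and a direct computation at level $3$ places both lifts $(2),(6)$ into Case~1 with $B_3=1$, giving four components at level $4$. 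The case $v_2(d-4)\geq 5$ cannot occur, since it would force $d\equiv 4\pmod{32}$ and hence $\sqrt d\in\mathbb{Z}_2$.

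For Cases (3)--(4), put $n_0=\lfloor v_2(d)/2\rfloor+1$ and induct on $n$ from $2$ to $n_0$. Along the \emph{central branch} $(0)$ one has $A_n(0)=\infty$ and $B_n(0)=v_2(d)-n$, so $(0)$ lies in Case~3 of Proposition \ref{stronglysplits} for every $n<n_0$ (forcing further splitting) and in Case~1 at level $n_0$ with $B_{n_0}=n_0-1$ (resp.\ $n_0-2$) when $v_2(d)$ is odd (resp.\ even); this produces $E_2$ (resp.\ $E'_2$). Along each \emph{sibling branch} $(2^{n-1})$ born at level $n$ (with $n<n_0$ in the even case), one finds $A_n=n$ and $B_n=n-2$, hence Case~1 with type I-$[n-2]$, producing $E_1$ (resp.\ $E'_1$). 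Case (4) has one remaining sibling, $(2^{n_0-1})$ at level $n_0$, with $A_{n_0}=n_0$ and $B_{n_0}=V-n_0$ where $V:=v_2(d-2^{v_2(d)})$. The hypothesis $\sqrt d\notin\mathbb{Z}_2$ forces $w:=V-v_2(d)\in\{1,2\}$: for $w=1$ the sibling is in Case~1 with type I-$[V-n_0]$ immediately; for $w=2$ it barely enters Case~3 ($A_{n_0}=B_{n_0}=n_0$), splits once, and an explicit computation of $b_{n_0+1}$ on both lifts, using $(2^{n_0-1}+2^{n_0})^2-d=2^{2k}(9-u)$ with $v_2(9-u)=2$, shows both lifts lie in Case~1 with $B=n_0-1$, yielding a total of $V-n_0$ splits for $E'_3$.

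The hardest step is this last one: the delicate $w=2$ computation in Case~(4), where an exact expansion is needed both to certify the type I-$[V-n_0]$ and to rule out a spurious periodic orbit. Throughout, the exact identity $b_n(x)=(x^2-d)/2^n$ replaces the general recursion of Lemma \ref{anbn}, keeping all arithmetic completely explicit.
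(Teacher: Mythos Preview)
Your proposal is correct and follows exactly the method the paper uses for the companion Theorems \ref{x^2-lambda}--\ref{x^2+(1-4m)x-v_2(m)-even} (the paper omits the proof of this theorem, stating only that it is ``similar''). Your use of the exact identity $f(x)-x=x^2-d$ to obtain $a_n(x)=2x+1$, $b_n(x)=(x^2-d)/2^n$ is precisely the analogue of the explicit $a_n,b_n$ computations carried out in those proofs, and your case split according to Proposition~\ref{stronglysplits} (Case~1 versus Case~3) mirrors the paper's treatment of Theorem~\ref{x^2+(1-4m)x-v_2(m)-odd}; in particular, your direct level-$(n_0{+}1)$ calculation when $w=2$ parallels the paper's direct evaluation of $f^2(x)-x$ via the quantity $\Theta$ in that proof.
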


\begin{thm}\label{x^2+x-d-1}
Consider $f(x)=x^2+x-d$ with $d\equiv 1 \ ({\rm mod} \ 4)$ and
$\sqrt{d}\not\in
 \mathbb{Z}_2$. Then $f(2\mathbb{Z}_2) \subset 1+2\mathbb{Z}_2$ and $3+4\mathbb{Z}_2$ is of type
 {\rm II-$[1]$}. Let $d=5+8t $ with  $t\in \mathbb{Z}_2$. If $v_2(t) \leq 1$, then $1+4\mathbb{Z}_2$ is of type
 {\rm II-$[v_2(t)+2]$}. If $v_2(t) \geq 2$, then
            \[
            1+4\mathbb{Z}_2=\{x_1,x_2\} \bigsqcup E_1\bigsqcup
            E_2 \bigsqcup E_3,
            \]
with the form
 \begin{align*}
   E_1=(a+2^4\mathbb{Z}_2)\cup (f(a)+2^4\mathbb{Z}_2)
   -\{{\textrm{II-$[3]$}}\},\\
      E_2=(b+2^5\mathbb{Z}_2)\cup (f(b)+2^5\mathbb{Z}_2)
   -\{{\textrm{II-$[3]$}}\},\\
      E_3=\bigsqcup_{n\geq 6} (x_1+2^n\mathbb{Z}_2)\cup (x_2+2^n\mathbb{Z}_2)
   -\{{\textrm{II-$[3]$}}\},
 \end{align*}
 and $x_1,x_2$ is a $2$-periodic orbit such that $x_1 \in c+2^5\mathbb{Z}_2 $ and $x_2 \in f(c)+2^5\mathbb{Z}_2 $.
 Precisely,
   \begin{enumerate}
       \item If $v_2(t) = 2$ and $v_2(t-4)=3 $, then $a=1,b=25,c=9$.

\item If $v_2(t) = 2$ and $v_2(t-4) \geq 4 $, then $a=1,b=9,c=25$.

\item If $v_2(t) = 3$, then $a=9,b=1,c=17$.

\item If $v_2(t) \geq 4$, then $a=9,b=17,c=1$.
 \end{enumerate}
\end{thm}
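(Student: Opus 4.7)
The plan is to apply the cycle-lifting analysis of Sections \ref{preliminary} and \ref{top-2-adic} level by level on $\mathbb{Z}/2^n\mathbb{Z}$. First, since $d\equiv 1 \pmod 4$ together with $\sqrt{d}\not\in\mathbb{Z}_2$ forces $d\equiv 5 \pmod 8$, a direct mod-$4$ computation using $x^2\equiv 1 \pmod 8$ for odd $x$ gives $f(2\mathbb{Z}_2)\subset 3+4\mathbb{Z}_2$ and shows that each of $1+4\mathbb{Z}_2$ and $3+4\mathbb{Z}_2$ is $f$-invariant, so the two odd cosets are treated independently. On $3+4\mathbb{Z}_2$, $(3,7)$ is the $2$-cycle of $f_3$; writing $d=5+8t$ I compute $a_3(3)=7(25-2d)\equiv 1 \pmod 8$ and $b_3(3)=2(3-8t+4t^2)$, so $A_3\geq 3$ and $B_3=1$, and Proposition \ref{stronglysplits}(1) immediately yields type II-$[1]$.

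For $1+4\mathbb{Z}_2$ I identify the level-$3$ cycle $(1,5)$ and compute $a_3(1)=-15-48t$, $b_3(1)=4t(1+2t)$, giving $A_3=4+v_2(1+3t)$ and $B_3=v_2(t)+2$. When $v_2(t)=0$, Proposition \ref{stronglysplits}(1) applies directly (since $\min\{A_3,3\}=3>2=B_3$) and gives II-$[2]$. When $v_2(t)=1$, one has $B_3=3=\min\{A_3,3\}$, so after a single application of Lemma \ref{new-anbn-2} at the unique splitting lift one obtains $B_4=2$, while Lemma \ref{ana_n+1} preserves $A_4=4$, after which Proposition \ref{stronglysplits}(1) at level $4$ gives II-$[3]$. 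Both subcases conform to the stated formula II-$[v_2(t)+2]$.

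For $v_2(t)\geq 2$, non-fixed $2$-periodic points of $f$ satisfy $x^2+2x+(2-d)=0$, i.e., $x=-1\pm 2\sqrt{1+2t}$; by Hensel's lemma $\sqrt{1+2t}\in\mathbb{Z}_2$ precisely when $v_2(t)\geq 2$, producing the orbit $\{x_1,x_2\}\subset 1+4\mathbb{Z}_2$ exactly in this regime, with indifferent constant $(f^2)'(x_i)=1-16(1+2t)$ of true exponent $4$. For the $2$-cycle $\sigma_n$ of $f_n$ containing the orbit, $n\geq 5$ forces $A_n=4<n$ and $B_n\geq A_n$ at a representative of $x_1$, so Proposition \ref{stronglysplits}(2) produces two lifts: one, $\sigma_{n+1}$, continues to carry the orbit in the same regime; the other, $\tau_{n+1}$, inherits $B_{n+1}=3$ with $A_{n+1}=4$ (Lemma \ref{ana_n+1}), hence by Proposition \ref{stronglysplits}(1) is of type II-$[3]$. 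The family $\{\tau_{n+1}\}_{n\geq 5}$ is exactly $E_3$. The pieces $E_1,E_2$ at levels $4,5$ come from the analogous off-regime separation of $\sigma_3,\sigma_4$, where at level $3$ we are in Case $3$ of Proposition \ref{stronglysplits} ($A_3,B_3\geq 3$) and must compute $(a_4,b_4)$ and $(a_5,b_5)$ for each lift directly from $f^2$.

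The representatives $a,b,c$ are pinned down by Hensel-lifting $\sqrt{1+2t}$: the four subcases correspond to $1+2t \pmod{2^5}$ (namely $25,9,17,1$), which fixes $\sqrt{1+2t}\pmod{2^4}$ (distinguishing (1)-(2) from (3)-(4)) and $\pmod{2^5}$ (distinguishing (1) from (2) and (3) from (4)); in each subcase the orbit's residue class at levels $4$ and $5$ selects which of the two lifts at each level carries the orbit, and the other lift becomes $E_1$ or $E_2$. The main obstacle is the level-$4$ and level-$5$ bookkeeping: because at level $3$ the clean Case-$1$/Case-$2$ dichotomy of Proposition \ref{stronglysplits} fails, one has to verify directly from Lemmas \ref{anbn} and \ref{new-anbn-2} that the non-orbit lift at level $4$ (and again at level $5$) achieves $B=3$ exactly, which is what forces the four-way split on $v_2(t)\!\pmod{4}$ and on $v_2(t-4)\!\pmod 2$; once these exceptional computations are finished, all levels $n\geq 6$ are uniform and yield $E_3$ by the generic argument above.
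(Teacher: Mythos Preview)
Your approach is correct and is precisely the method the paper intends: the paper explicitly omits the proof of this theorem, stating it is ``similar to those of Theorems \ref{x^2-lambda}--\ref{x^2+(1-4m)x-v_2(m)-even}'', and your level-by-level analysis via $a_n,b_n$ and Propositions \ref{grows}--\ref{weaklysplits} is exactly that template.

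Two small slips worth fixing. First, $f(2\mathbb{Z}_2)\subset 3+4\mathbb{Z}_2$ is false (e.g.\ $f(2)=6-d\equiv 1\pmod 8$); only $f(2\mathbb{Z}_2)\subset 1+2\mathbb{Z}_2$ holds, which is all the theorem claims and all you need. Second, when $v_2(t)=1$ you invoke Lemma \ref{ana_n+1} to get $A_4=4$, but that lemma only yields $A_4\geq 3$ in the regime $A_3\geq n=3$; the equality $A_4=4$ is true but requires the direct computation of $(f^2)'$ at the level-$4$ representatives (or equivalently the observation that $(f^2)'(x)-1=-16(1+3t)+O(2^5)$ uniformly on $1+4\mathbb{Z}_2$). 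Neither slip affects the structure of the argument.
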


\begin{thm}\label{x^2+x-d-2}
 Consider $f(x)=x^2+x-d$ with $d\equiv 2 \ ({\rm mod} \ 4)$ and $\sqrt{d}\not\in
 \mathbb{Z}_2$. Then $f(1+2\mathbb{Z}_2)\subset 2\mathbb{Z}_2 $.
 \begin{enumerate}
   \item If $v_2(d-2)=2$, then $2\mathbb{Z}_2$ is of type {\rm II-$[1]$}.
   \item If $v_2(d-2)=3$, then $8\mathbb{Z}_2 \cup
   (f(0)+8\mathbb{Z}_2)$ is of type {\rm II-$[1]$}, $(4+8\mathbb{Z}_2)\cup (f(4)+8\mathbb{Z}_2) $ consists
   of a $2$-periodic orbit with one point $x_1\in 4+8\mathbb{Z}_2$
   and the other $x_2 \in f(4)+8\mathbb{Z}_2 $, and for each $n\geq
   4$, $(x_1 + 2^{n}\mathbb{Z}_2)\cup (x_2+2^n \mathbb{Z}_2)$
   is of type {\rm II-$[2]$};
   \item If $v_2(d-2)\geq 4$, then $4+ 8\mathbb{Z}_2 \cup
   (f(4)+8\mathbb{Z}_2)$ is of type {\rm II-$[1]$}, $8\mathbb{Z}_2\cup (f(0)+8\mathbb{Z}_2) $ consists
   of a $2$-periodic orbit with one point $x_1\in 8\mathbb{Z}_2$
   and the other $x_2 \in f(0)+8\mathbb{Z}_2 $, and for each $n\geq
   4$, $(x_1 + 2^{n}\mathbb{Z}_2)\cup (x_2+2^n \mathbb{Z}_2)$
   is of type {\rm II-$[2]$}.
 \end{enumerate}
\end{thm}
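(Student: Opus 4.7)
The plan is to apply the cycle-analysis machinery of Section~\ref{top-2-adic} to the unique level-$2$ cycle of $f$. First, since $d$ is even and $x^{2}+x=x(x+1)$ is always even, $f$ maps $\mathbb{Z}_{2}$ into $2\mathbb{Z}_{2}$, so the dynamics reduce to the invariant ball $2\mathbb{Z}_{2}$. Computing $f_{2}$ on $\mathbb{Z}/4\mathbb{Z}$ one sees $f(0)\equiv 2$ and $f(2)\equiv 0\pmod 4$, giving the single $2$-cycle $(0,2)$. With $g=f\circ f$ one gets
\[
a_{2}(0)=g'(0)=1-2d,\qquad b_{2}(0)=\frac{g(0)}{4}=\frac{d(d-2)}{4},
\]
so $A_{2}=v_{2}(2d)=2$ and $v_{2}(b_{2}(0))=v_{2}(d-2)-1$. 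The three subcases of the theorem correspond exactly to comparing this valuation with $A_{2}=n=2$.

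For case (1), $v_{2}(d-2)=2$, one has $B_{2}=1<A_{2}=2$, so Proposition~\ref{stronglysplits}(1) applies at once: the cycle $(0,2)$ splits once and its lifts at level $3$ grow forever, hence by Corollary~\ref{grow-two-times} grow at every subsequent level, so $2\mathbb{Z}_{2}$ is a single type II-$[1]$ component.

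For cases (2) and (3), $v_{2}(d-2)\geq 3$, one has $B_{2}\geq A_{2}=n=2$, placing us in Case~3 of Proposition~\ref{stronglysplits}, which only promises one further level of splitting. I would therefore lift to level~$3$, where $f_{3}$ on $\mathbb{Z}/8\mathbb{Z}$ produces the two $2$-cycles $(0,6)$ and $(4,2)$. To locate the unique period-$2$ orbit I would use the factorization
\[
f^{2}(x)-x=(x^{2}-d)(x^{2}+2x+2-d),
\]
which combined with $\sqrt{d}\notin\mathbb{Z}_{2}$ shows that period-$2$ points exist iff $\sqrt{d-1}\in\mathbb{Z}_{2}$, i.e.\ iff $d\equiv 2\pmod 8$. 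A short expansion of $\sqrt{d-1}$ modulo $8$ in each subcase places the orbit $\{-1\pm\sqrt{d-1}\}$ in the cycle $(4,2)$ when $v_{2}(d-2)=3$ and in $(0,6)$ when $v_{2}(d-2)\geq 4$; this is exactly what distinguishes~(2) from~(3). For the level-$3$ cycle not containing the orbit I would recompute $a_{3},b_{3}$ via $g'(x)=(2f(x)+1)(2x+1)$ and $b_{3}(x)=g(x)/8$, verify $A_{3}=2$ and $B_{3}=1$, and apply Proposition~\ref{stronglysplits}(1) to conclude that it is of type II-$[1]$.

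The main obstacle is the recursion for the cycle containing the period-$2$ orbit. Here $A_{3}=2\leq B_{3}$ with $A_{3}<n=3$, so Case~2 of Proposition~\ref{stronglysplits} applies: one ``same-behavior'' lift continues to contain the orbit while its sibling forms a new type~II component. One must then show inductively that along the entire same-behavior chain $A_{n}=2$ and $B_{n}\geq A_{n}$ hold, so that Case~2 iterates forever. Stability $A_{n}=2$ follows from Lemma~\ref{ana_n+1} since $A_{2}=2<n$ for all $n\geq 3$, while $B_{n}\geq A_{n}$ on the orbit-chain follows from Lemma~\ref{lem1}(iv) together with $b_{n}$ vanishing identically at the period-$2$ point. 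Once this recursion is established, each ``other lift'' at level $n+1$ is the annular set described in the theorem, and reading off the three decompositions completes the proof.
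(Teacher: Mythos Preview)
Your approach is correct and is precisely the method the paper itself would use: the authors omit this proof, saying it is ``similar to those of Theorems~\ref{x^2-lambda}--\ref{x^2+(1-4m)x-v_2(m)-even}'', and your outline carries out exactly that programme---computing $a_n,b_n$ for the level-$2$ cycle $(0,2)$, invoking Proposition~\ref{stronglysplits}, factoring $f^2(x)-x=(x^2-d)(x^2+2x+2-d)$ to locate the period-$2$ orbit, and then running the Case~2 recursion along the orbit chain.

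Two small points to tighten. First, your phrase ``$A_2=2<n$ for all $n\ge 3$'' is garbled; what you need (and what actually holds) is that once $A_3=2<3$, Lemma~\ref{ana_n+1} gives $A_{n+1}=A_n=2$ for every subsequent $n$, so the Case~2 hypothesis $A_n<n$ persists. Second, when you ``read off'' the type of the sibling lifts, do the bookkeeping explicitly: from $A_n=2$ along the orbit chain (indeed $g'(x_1)=(2x_1+1)(2x_2+1)=5-4d$, so $v_2(g'(x_1)-1)=v_2(4(1-d))=2$), Proposition~\ref{stronglysplits} Case~2 says the non-orbit lift splits $A_n-1=1$ further time before growing. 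Make sure your final count of splittings matches the indexing in the theorem statement; the annular pieces in (2) and (3) should be written as $(x_i+2^{n-1}+2^n\mathbb{Z}_2)$ in analogy with the $E_1$ of Theorems~\ref{x^2+(1-4m)x-v_2(m)-odd} and~\ref{x^2+(1-4m)x-v_2(m)-even}, and you should reconcile the label II-$[2]$ with your computation rather than just asserting it.
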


\begin{thm}\label{x^2+x-d-3}
 For $f(x)=x^2+x-d$ with $d\equiv 3 \ ({\rm mod} \ 4)$, the ball $2\mathbb{Z}_2
 $ is mapped into the ball $1+2\mathbb{Z}_2$ which is the unique
 minimal component.
\end{thm}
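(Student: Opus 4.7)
The plan is to first reduce the theorem to a minimality statement on $1+2\mathbb{Z}_2$ via an elementary invariance argument, and then to apply the cycle-growth machinery of Section \ref{top-2-adic}.

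\textbf{Reduction.} The polynomial $x^2+x = x(x+1)$ is always $\equiv 0 \pmod 2$ in $\mathbb{Z}_2$, while $d$ is odd since $d\equiv 3 \pmod 4$. Hence $f(x) \equiv -d \equiv 1 \pmod 2$ for every $x \in \mathbb{Z}_2$, so $f(\mathbb{Z}_2) \subset 1+2\mathbb{Z}_2$. This simultaneously gives the mapping property $f(2\mathbb{Z}_2) \subset 1+2\mathbb{Z}_2$, shows that $1+2\mathbb{Z}_2$ is $f$-invariant, and forces every minimal component of $f$ to lie inside $1+2\mathbb{Z}_2$ (since a minimal $B_i$ is invariant and $f(B_i) \subset 1+2\mathbb{Z}_2$). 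It therefore suffices to prove that $f:1+2\mathbb{Z}_2\to 1+2\mathbb{Z}_2$ is itself minimal.

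\textbf{Cycle analysis at the first two levels.} By Theorem \ref{minimal-part-to-whole}, minimality on $1+2\mathbb{Z}_2$ is equivalent to saying that, for every $n\geq 1$, $f_n$ acts as a single $2^{n-1}$-cycle on the residues of $1+2\mathbb{Z}_2$ modulo $2^n$. In the language of Section \ref{top-2-adic}, this means the starting $1$-cycle $(1)$ at level $1$ grows forever. At level $1$, I compute $a_1 = f'(1) = 3$ and $b_1 = (f(1)-1)/2 = (1-d)/2$; both are odd in view of $d\equiv 3 \pmod 4$, so the cycle grows and produces the $2$-cycle $(1,3)$ at level $2$. By Lemma \ref{after-growth}, $a_2 \equiv 1 \pmod 4$ is automatic. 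A short computation using $f^2(1) = d^2-6d+6$ and writing $d=3+4k$ yields $b_2(1) = (d^2-6d+5)/4 = 4k^2 - 1 \equiv 1 \pmod 2$, so the $2$-cycle at level $2$ is \emph{strongly} growing.

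\textbf{Induction and conclusion.} Proposition \ref{grows} ensures that a strongly growing cycle at any level $n\geq 2$ lifts to a strongly growing cycle at level $n+1$. By induction the cycle grows forever: we are in situation (S2) of Section \ref{formation}, so $f_n$ is a single $2^{n-1}$-cycle on each finite quotient. Theorem \ref{minimal-part-to-whole} then gives minimality of $f$ on $1+2\mathbb{Z}_2$, and uniqueness follows from the Reduction above. The main obstacle is the transition from level $1$ to level $2$: growing at level $1$ is the weakest of the four behaviors for $p=2$, and since Proposition \ref{grows} requires $n \geq 2$, a direct verification that the $2$-cycle at level $2$ is strongly growing (not merely weakly growing) cannot be bypassed. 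Once this single computation of $b_2 \pmod 2$ is done, the remainder of the argument is automatic.
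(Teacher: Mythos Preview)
Your proof is correct and follows precisely the approach the paper indicates (the paper omits the proof of Theorem \ref{x^2+x-d-3}, stating only that it is similar to those of Theorems \ref{x^2-lambda}--\ref{x^2+(1-4m)x-v_2(m)-even}, all of which proceed by computing $a_n,b_n$ at low levels and invoking the growth propositions). Your handling of the level-$1$ to level-$2$ transition is exactly the care required: since Proposition \ref{grows} needs $n\ge 2$, you correctly fall back on Lemma \ref{after-growth} for $a_2\equiv 1\pmod 4$ and verify $b_2\equiv 1\pmod 2$ by hand, after which strong growth propagates automatically.
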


We prove Theorems \ref{x^2-lambda}-\ref{x^2+(1-4m)x-v_2(m)-even}.
The proofs of Theorems \ref{x^2+x-d-0}-\ref{x^2+x-d-3} will be
omitted since they are similar to those of Theorems
\ref{x^2-lambda}-\ref{x^2+(1-4m)x-v_2(m)-even}.

\textit{Proof of Theorem \ref{x^2-lambda}.} Let $f(x)=x^2-\lambda$.
Then $f'(x)=2x $ and
$(f^2)'(x)=4x^3-4\lambda x $.\\
 \indent 1) If $\lambda \equiv  0 \ ({\rm mod} \ 4) $, then $2+4\mathbb{Z}_2
 $ and $3+4\mathbb{Z}_2 $ are mapped into $4\mathbb{Z}_2 $ and $1+4\mathbb{Z}_2
 $ respectively, and $4\mathbb{Z}_2 $ and $1+4\mathbb{Z}_2
 $ are mapped into themselves respectively. Consider the cycles $(0)
 $ and $(1)$ of $f_2$. We have
 \begin{eqnarray*}
a_2(0)=f'(0) \equiv  0 \ ({\rm mod} \ 2) \quad {\rm and} \quad
a_2(1)=f'(1) \equiv 0 \ ({\rm mod} \ 2).
 \end{eqnarray*}
Thus cycles $(0)$ and $(1)$ grow tails, hence there will form
 two attracting fixed points, one in  $4\mathbb{Z}_2 $ with basin $2\mathbb{Z}_2 $, and the other one in $1+
 4\mathbb{Z}_2$ with basin $1+2\mathbb{Z}_2 $.\\
\indent 2) If $\lambda \equiv  1 \ ({\rm mod} \ 4) $, then
$1+4\mathbb{Z}_2
 $ and $2+4\mathbb{Z}_2 $ are mapped into $4\mathbb{Z}_2 $ and $3+4\mathbb{Z}_2
 $ respectively, and $4\mathbb{Z}_2 $ and $3+4\mathbb{Z}_2
 $ are mapped into $3+4\mathbb{Z}_2 $ and $4\mathbb{Z}_2$ respectively. Consider the cycle $(0,3)$ of $f_2$. We have
 \begin{eqnarray*}
a_2(0)=(f^2)'(0) \equiv  0 \ ({\rm mod} \ 2) .
 \end{eqnarray*}
Thus cycle the cycle $(0,3)$ grows tails, hence there will form
 an attracting $2$-periodic orbit, with one periodic point in  $4\mathbb{Z}_2 $, and the other one in $3+
 4\mathbb{Z}_2$. We also see that the attracting basin is the whole $\mathbb{Z}_2
 $.

 The proofs of 3) and 4) are similar to the proofs of 1) and 2).
 \qed

\medskip

\textit{Proof of Theorem \ref{x^2+x}.} Let $f(x)=x^2+x$. We will use
a diagram to show the structure of the dynamics of $f$.

%
%  \begin{diagram}
%    1 &    &   \rTo &   0  &        &   \\
%      &    &  \ldTo &      &  \rdTo & \\
%    2 & 0  &        &      &    2   &
%  \end{diagram}

% \begin{figure}[h]
% \begin{center}
% \begin{picture}
% (100,20)(0,0)
% \put(5,0){1}
% \put(25,6){\vector(1,0){40}}
% \put(75,0){0}  \put(10,-10){\vector(0,-1){30}}
% \put(10,-10){$\circlearrowleft$}
% \put(79,-10){\vector(0,-1){30}}
% \put(5,-60){0} \put(75,-60){2}
% \put(25,-57){\vector(1,0){40}}
% \end{picture}
% \end{center}
% \par
% \vspace{1in}
% \end{figure}

\begin{center}
\begin{pspicture}(0,0)(12,7)
%\psgrid
\rput(7,3.5){\includegraphics{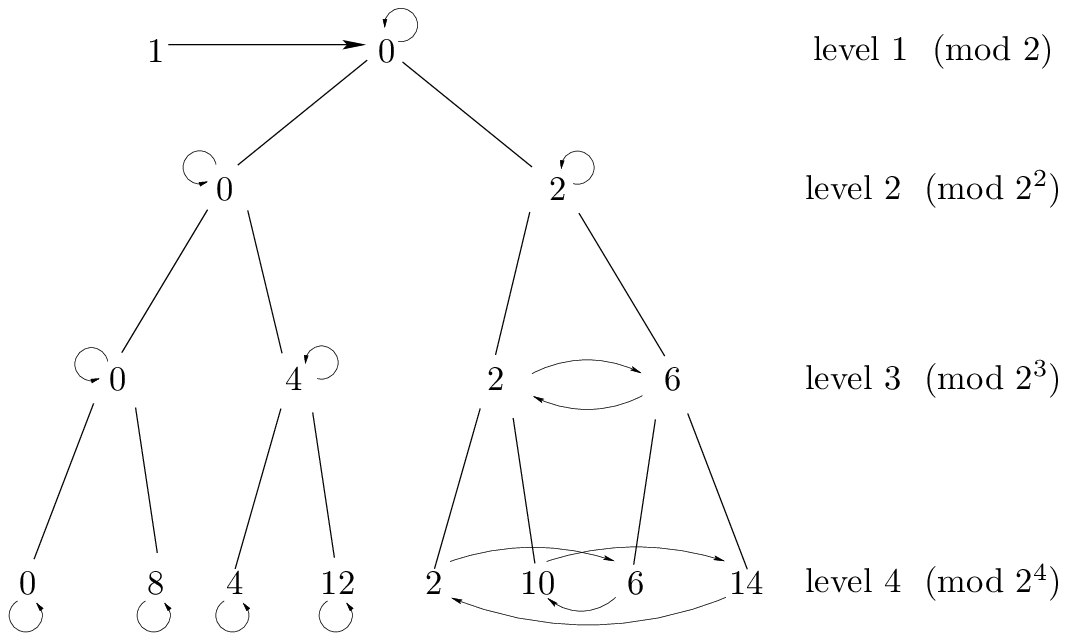}}

%\caption{Solution of (\ref{equations-Lya})}
\end{pspicture}
\end{center}

%\begin{center}
%\begin{pspicture}(0,0)(12,7)
%%\psgrid
%\rput(4,3.5){\includegraphics{tree1}}
%
%\rput(1.6,6.08){$1$} \rput(3.95,6.08){$0$}
%\rput(4.1,6.35){\includegraphics{petitecycle1}}
%\rput(9.5,6.08){level $1$ $\ ({\rm mod} \ 2)$}
%
%\rput(2.3,4.68){$0$} \rput(5.68,4.68){$2$}
%\rput(2.05,4.9){\includegraphics{cycleleft}}
%\rput(5.9,4.9){\includegraphics{cycleright}} \rput(9.5,4.68){level
%$2$ $\ ({\rm mod} \ 2^2)$}
%
%\rput(1.21,2.75){$0$} \rput(3,2.75){$4$} \rput(5.05,2.75){$2$}
%\rput(6.85,2.75){$6$} \rput(0.95,2.9){\includegraphics{cycleleft}}
%\rput(3.3,2.92){\includegraphics{cycleright}}
%\rput(6,2.7){\includegraphics{2cycle}} \rput(9.5,2.75){level $3$ $\
%({\rm mod} \ 2^3)$}
%
%\rput(0.3,0.68){$0$} \rput(1.6,0.68){$8$} \rput(2.4,0.68){$4$}
%\rput(3.45,0.68){$12$} \rput(4.42,0.68){$2$} \rput(5.48,0.68){$10$}
%\rput(6.48,0.68){$6$} \rput(7.6,0.68){$14$}
%\rput(0.3,0.35){\includegraphics{petitecycle}}
%\rput(1.6,0.35){\includegraphics{petitecycle}}
%\rput(2.4,0.35){\includegraphics{petitecycle}}
%\rput(3.45,0.35){\includegraphics{petitecycle}}
%\rput(6,0.65){\includegraphics{4cycle}} \rput(9.5,0.68){level $4$ $\
%({\rm mod} \ 2^4)$}
%
%%\caption{Solution of (\ref{equations-Lya})}
%\end{pspicture}
%\end{center}

At level $n$, the "$\rightarrow $" stands for the transformation of
the elements of $\mathbb{Z}/p^n\mathbb{Z}$ under $f_n$. Thus the
diagram shows that
\begin{eqnarray*}
f_1(1)=0, \ f_1(0)=0,\quad i.e., \ f(1+2\mathbb{Z}_2) \subset
2\mathbb{Z}_2 \ {\rm and} \ f(2\mathbb{Z}_2) \subset 2\mathbb{Z}_2.
\end{eqnarray*}
and
\begin{eqnarray*}
f_2(0)=0, \ f_2(2)=2,\quad i.e., \ f(4\mathbb{Z}_2) \subset
4\mathbb{Z}_2 \ {\rm and} \ f(2+4\mathbb{Z}_2) \subset
2+4\mathbb{Z}_2.
\end{eqnarray*}
Since $f(1+2\mathbb{Z}_2) \subset 2\mathbb{Z}_2  $ and
$f^{-1}(1+2\mathbb{Z}_2)= \emptyset$, we need only to consider
$2\mathbb{Z}_2$. From the diagram, we also see that $(0)$ is the
only cycle of $f_1$ with length $1$, and $(0),(2)$ are two lifts of
$(0)$.

 We will start our examination from the level $2$.
Since
\[ a_2(0)=f'(0)=1 \quad {\rm and} \quad b_2(0)= \frac{f(0)-0}{2^2}=0,\]
we have $a_2(0) \equiv 1 \ ({\rm mod} \ 4) $ and
\[ A_2(0)= \infty \quad {\rm and} \quad  B_2(0)=\infty.\]
Thus the cycle $(0)$ strongly splits.
% and by Proposition
%\ref{stronglysplits}, the cycle $(0)$ splits infinite times.

Since
\[ a_2(2)=f'(2)=5 \quad {\rm and} \quad b_2(2)= \frac{f(2)-2}{2^2}=1,\]
we have $a_2(2) \equiv 1 \ ({\rm mod} \ 4) $ and
\[ A_2(2)=2 \quad {\rm and} \quad  B_2(0)=0.\]
Thus the cycle $(2)$ strongly grows which implies that the lift of
$(2)$ still grows, and so on. Hence $2+4\mathbb{Z}_2$ is a minimal
component.

By induction we know that for all $n\geq 2$
\[ A_n(0)= \infty \quad {\rm and} \quad  B_n(0)=\infty.\]
Thus the cycle $(0)$ of $f_{n-1}$ always splits to be two cycles
$(0)$ and $(2^{n-1})$ of $f_n$,
 and the number $0$ should be a fixed point.

Now for $n\geq 2$, let us consider the cycle $(2^{n-1})$ of $f_n$.
With the same calculations,
\[ a_n(2^{n-1})=2^{n}+1 \quad {\rm and} \quad b_n(2^{n-1})= 2^{n-2}.\]
Thus $a_n(2^{n-1})\equiv  1 \ ({\rm mod} \ 4)$ and
\[ A_n(2^{n-1})= n \quad {\rm and} \quad  B_n(2^{n-1})=n-2.\]
Hence, the cycle $(2^{n-1})$ strongly splits and $B_n < \min \{A_n,
n\} $. By Proposition \ref{stronglysplits}, the lift of $(2^{n-1})$
splits $B_n-1=n-3$ times then all lifts strongly grow. Thus there
are $2^{n-2}$ pieces of minimal components which constitute $2^{n-1}
+2^n\mathbb{Z}_2 $. They are
\[2^{n-1} +t2^n +2^{2n-2}\mathbb{Z}_2, \quad t=0, \dots , 2^{n-2}-1.\]
This concludes the proof of Theorem \ref{x^2+x}. \qed

\medskip
\textit{Proof of Theorem \ref{x^2+(1-4m)x}.} Let $f(x)=x^2+(1-4m)x$.
We see that there are two fixed points $0$ and $4m $, and
$1+2\mathbb{Z}_2 $ is mapped into $2\mathbb{Z}_2 $.  We are
concerned with the invariant subset $2\mathbb{Z}_2 $.

Consider
 $2^{n-1}+2^n\mathbb{Z}_2 \ (n\geq 2)$. We study the cycle
 $(2^{n-1})$ at level $n$. We have
 \[
   a_n(2^{n-1})=2^n-4m+1 \qquad b_n(2^{n-1})=2^{n-2}-2m,
 \]
thus $a_n(2^{n-1})\equiv  1 \ ({\rm mod} \ 4)$ and if $2\leq n <
v_2(m)+3 $,
\[
   A_n(2^{n-1})\geq n \qquad B_n(2^{n-1})=n-2.
 \]
If $n=2$, then the cycle $(2^{n-1})$ strongly grows. If $n>2 $,
 then the cycle $(2^{n-1})$ strongly splits and $B_n < \min \{A_n, n\} $.
 By Proposition \ref{stronglysplits}, the lift of $(2^{n-1})$ strongly splits $B_n-1=n-3$ times
then all lifts strongly grow. Thus we will obtain the part $E_1$ in
Theorem \ref{x^2+(1-4m)x}.

 If $ n > v_2(m)+3 $,
 \[
   A_n(2^{n-1})=v_2(m)+2 \qquad B_n(2^{n-1})=v_2(m)+1.
 \]
 Hence, the cycle $(2^{n-1})$ strongly splits and $B_n < \min \{A_n, n\} $. By Proposition \ref{stronglysplits},
  the lift of $(2^{n-1})$ strongly splits $B_n-1=v_2(m)$ times
then all lifts strongly grow. Hence we have the part $E_2$.

Consider $4m+2^{n-1}+2^n\mathbb{Z}_2$ ($ n > v_2(m)+3 $). Let $s_n
\equiv 4m + 2^{n-1} \ ({\rm mod} \ 2^n)$ and $0 \leq s_n < 2^{n} $.
We study the cycle $(s_n)$ at level $n$. We have
 \[
   a_n(s_n)=2s_n-4m+1 \qquad b_n(s_n)=\frac{s_n(s_n-4m)}{2^n},
 \]
thus $a_n(s_n)\equiv  1 \ ({\rm mod} \ 4)$ and
\[
   A_n(s_n)=v_2(m)+2 \qquad B_n(s_n)=v_2(m)+1.
 \]
 Hence, $B_n < \min \{A_n, n\} $. By Proposition \ref{stronglysplits}, the cycle $(s_n)$ strongly splits and
 the lift of $(s_n)$ strongly splits $B_n-1=v_2(m)$ times
then all lifts strongly grow. Therefore, we have the part $E_3$.
This completes the proof. \qed

\medskip
\textit{Proof of Theorem \ref{x^2+(1-4m)x-v_2(m)-odd}.} Let
$f(x)=x^2+(-1-4m)x$ with $v_2(m) \in 1+2\mathbb{N}$. We see that
there are two fixed points $0$ and $4m+2 $, and $1+2\mathbb{Z}_2 $
is mapped into $2\mathbb{Z}_2 $.

Since $0$ and $4m+2$ are two fixed points, there are cycles $(0) $
and $(t_n) $ at each level, where $t_n\equiv 4m +2 \ ({\rm mod} \
2^{n}) $ and $0 \leq t_n < 2^{n} $. Consider the cycles $(0)$ and
$(t_{n-2})$ at level $n-2$. By studying the $a_{n-2}, b_{n-2}$ of
these two cycles, we know that they weakly split. By Proposition
\ref{weaklysplits}, after splitting, half of lifts weakly grow. Thus
we will obtain two $2$-cycles: $(2^{n-2},2^{n-2}+2^{n-1})$ and
$(s_n, s_n+2^{n-1})$ at level $n$, where $s_n\equiv 4m +2+ 2^{n-2} \
({\rm mod} \ 2^{n-1})$ and $0 \leq s_n < 2^{n-1} $.

For each $ n \geq 4 $, we study the cycle $(s_n, s_n+2^{n-1})$ at
level $n$. We have
 \begin{eqnarray*}
   & &a_n(s_n)=8\left(4\left(\frac{s_n}{2}\right)^3-3(4m+1)\left(\frac{s_n}{2}\right)^2+ m(4m+1)s_n+2m^2+m\right)+1 \\
   & &b_n(s_n)=\frac{1}{2^n}s_n(s_n-4m-2)(s_n^2-4ms_n-4m) ,
 \end{eqnarray*}
thus $a_n(s_{n})\equiv  1 \ ({\rm mod} \ 4)$ and
\[
   A_n(s_{n})=3 \qquad B_n(s_{n})=1.
 \]
 Hence, the cycle $(s_n, s_n+2^{n-1})$ strongly splits and $B_n < \min \{A_n, n\} $.
Therefore, by Proposition \ref{stronglysplits}, the lift of $(s_n,
s_n+2^{n-1})$ strongly splits $B_n-1=1-1=0$ times then all lifts
strongly grow. Thus we obtain $E_1$ in Theorem
\ref{x^2+(1-4m)x-v_2(m)-odd}.

 Now we study the cycle $(2^{n-2},2^{n-2}+2^{n-1})$ at level $n\geq 4$. We have
 \begin{eqnarray*}
   & &a_n(2^{n-2})=2^{3n-2}-3(4m+1)2^{2n-3}+m(4m+1)2^{n+1}+16m^2+8m+1 \\
   & &b_n(2^{n-2})=2(2^{n-3}-2m-1)(2^{2n-6}-m2^{n-2}-m).
 \end{eqnarray*}
Thus $a_n(2^{n-2})\equiv  1 \ ({\rm mod} \ 4)$ and for each $ n >
\lfloor \frac{v_2(m)}{2} \rfloor+3 $,
 \[
   A_n(2^{n-2})=v_2(m)+3 \qquad B_n(2^{n-2})=v_2(m)+1.
 \]
 Hence, the cycle $(2^{n-2},2^{n-2}+2^{n-1})$ strongly splits and $B_n < \min \{A_n, n\} $.
Therefore, by Proposition \ref{stronglysplits}, the lift of
$(2^{n-2},2^{n-2}+2^{n-1})$ strongly splits
$B_n-1=v_2(m)+1-1=v_2(m)$ times then all lifts strongly grow. Thus
we have $E_3$.

 For each $4\leq n \leq \lfloor \frac{v_2(m)}{2} \rfloor +3 $,
 \[
   A_n(2^{n-2})=2n-3 \qquad B_n(2^{n-2})=2n-5.
 \]
 Hence, if $n>4$, then the cycle $(2^{n-2},2^{n-2}+2^{n-1})$ strongly splits and $A_n> B_n \geq n $.
Therefore, the lift of $(2^{n-2},2^{n-2}+2^{n-1})$ strongly splits
at least $n-1$ times. But except this we do not obtain any further
more information. Thus Proposition \ref{stronglysplits} is not
sufficient for us. Now we do some calculations directly.

For any point $2^{n-2}+t2^{n-1} \in 2^{n-2}+2^{n-1}\mathbb{Z}_2$,
with $t\in \mathbb{Z}_2 $, we have
\begin{eqnarray}\label{Theta}
f^2(2^{n-2}+t2^{n-1})-(2^{n-2}+t2^{n-1})=2^{n+1}(1+2t)\cdot \Theta,
\end{eqnarray}
where
\begin{eqnarray*}
  \Theta:= (2^{n-3}+t2^{n-2}-2m-1)\left((2^{n-3}+t2^{n-2})^2-m(2^{n-2}+t2^{n-1})-m
\right).
\end{eqnarray*}

 Since $4\leq n \leq \lfloor \frac{v_2(m)}{2} \rfloor +3 $, we
have $v_2(\Theta)=2n-6 $, and
\begin{eqnarray*}
f^2(2^{n-2}+t2^{n-1})-(2^{n-2}+t2^{n-1})
&\equiv&  0 \ ({\rm mod} \ 2^{3n-5})\\
&\not\equiv&  0 \ ({\rm mod} \ 2^{3n-4})
\end{eqnarray*}
Thus the cycles grow at level $3n-5$. By Corollary
\ref{grow-two-times}, the cycles grow always. Therefore we obtain
the part $E_2$ which completes the proof. \qed

\medskip
\textit{Proof of Theorem \ref{x^2+(1-4m)x-v_2(m)-even}.} Let
$f(x)=x^2+(-1-4m)x$ with $v_2(m) \in 2\mathbb{N}^*$. We see that
there are two fixed points $0$ and $4m+2$, and $1+2\mathbb{Z}_2 $ is
mapped into $2\mathbb{Z}_2 $.

As the proof of Theorem \ref{x^2+(1-4m)x-v_2(m)-odd}, we study two
$2$-cycles: $(2^{n-2},2^{n-2}+2^{n-1})$ and $(s_n, s_n+2^{n-1})$ at
level $n$, where $s_n\equiv 4m +2+ 2^{n-2} \ ({\rm mod} \ 2^{n-1})$
and $0 \leq s_n < 2^{n-1} $. The existence of $E_1,E_2,E_3 $ are the
same as that of Theorem \ref{x^2+(1-4m)x-v_2(m)-odd}.

%As the proof of Theorem \ref{x^2+(1-4m)x-v_2(m)-odd}, for $n\geq 4
%$, we study the cycle $(2^{n-2},2^{n-1})$ at level $n$. We have
% \begin{eqnarray*}
%   & &a_n(2^{n-2})=2^{3n-2}-3(4m+1)2^{2n-3}+m(4m+1)2^{n+1}+16m^2+8m+1 \\
%   & &b_n(2^{n-2})=2(2^{n-3}-2m-1)(2^{2n-6}-m2^{n-2}-m) ,
% \end{eqnarray*}
%thus $a_n(2^{n-2})\equiv  1 \ ({\rm mod} \ 4)$. For 4) and 5), the
%cases $4\leq n < \frac{v_2(m)}{2}+3 $ and $ n > \frac{v_2(m)}{2}+3 $
%correspond to the cases $4\leq n \leq \lfloor \frac{v_2(m)}{2}
%\rfloor +3 $ and $n > \lfloor \frac{v_2(m)}{2} \rfloor +3 $ in
%Theorem \ref{x^2+(1-4m)x-v_2(m)-odd}. Thus 4) and 5) follow the same
%way.

Consider $2^{n-2}+2^{n-1}\mathbb{Z}_2$ with $ n = \frac{v_2(m)}{2}+3
$. We are going to study the cycle $(2^{n-2},2^{n-2}+2^{n-1})$ at
level $n$. We study the points $2^{n-2}+t2^{n-1} \in
2^{n-2}+2^{n-1}\mathbb{Z}_2$, with $t\in \mathbb{Z}_2$. With the
same calculation in the proof of Theorem
\ref{x^2+(1-4m)x-v_2(m)-odd}, we have the same equation
(\ref{Theta}). To continue the proof, we will distinguish two cases:
$v_2(m)=2 $ and $v_2(m)\geq 4 $.

If $v_2(m)=2 $, then $n={v_2(m)}/{2}+3=4 $ and
\begin{eqnarray*}
\Theta= (4t-2m+1)[(4-m)+16(t+t^2)-4m(1+2t)].
\end{eqnarray*}
Thus if $v_2(m-4)=3$, then $v_2(\Theta)=3 $ and
\begin{eqnarray*}
f^2(2^{n-2}+t2^{n-1})-(2^{n-2}+t2^{n-1})
&\equiv&  0 \ ({\rm mod} \ 2^{8})\\
&\not\equiv&  0 \ ({\rm mod} \ 2^{9}).
\end{eqnarray*}
 If $v_2(m-4)\geq 5 $, then $v_2(\Theta)=4 $ and
\begin{eqnarray*}
f^2(2^{n-2}+t2^{n-1})-(2^{n-2}+t2^{n-1})
&\equiv&  0 \ ({\rm mod} \ 2^{9})\\
&\not\equiv&  0 \ ({\rm mod} \ 2^{10}).
\end{eqnarray*}
Hence we will obtain (1) and (2) of Theorem
\ref{x^2+(1-4m)x-v_2(m)-even}.

Since $f^2(x)-x=x(x-4m-2)(x^2-4mx-4m) $, $f $ has $2$-periodic orbit
if and only if $x^2-4mx-4m=0 $ has solutions different to $0$ and
$4m+2$ in $\mathbb{Z}_2 $. But $x^2-4mx-4mx=0 $ has solution $0$ or
$4m+2$ only if $m=0$ or $m=-1 $. Thus for the case $v_2(m)\in
\mathbb{N}^* $, $f$ has $2$-periodic orbit if and if only
$\vartriangle:=16m^2+16m $ has square roots in $\mathbb{Z}_2 $. By
the standard argument in number theory (see \cite{Serre}, p.18),
this is equivalent to $2^{-v_2(m)}m(m+1)\equiv 1 \ ({\rm mod} \ 8)
$. By some basic calculations it is then equivalent to $v_2(m-4)=4$.
This is nothing but the rest case we need to study. Thus for
$v_2(m-4)=4 $, there exists a $2$-periodic orbit.

From the equation $x^2-4mx-4m=0 $, the periodic point can be written
as
\begin{eqnarray*}
x_1=4\left(\frac{m}{2}+ \sqrt{\frac{m(m+1)}{4}}\right), \quad
x_2=4\left(\frac{m}{2}-\sqrt{\frac{m(m+1)}{4}}\right).
\end{eqnarray*}
%Since $ v_2(m)=2$, we have $x_1, x_2\equiv 4 \ ({\rm mod} \ 8) $.
Recall that we are concerned with $2^{n-2}+2^{n-1}\mathbb{Z}_2 \ (n=
4) $ which is the union of two balls $2^{n-2}+2^{n}\mathbb{Z}_2 $
and $2^{n-2}+2^{n-1}+2^n\mathbb{Z}_2 $, and we are studying the
cycle $(2^{n-2},2^{n-2}+2^{n-1}) $ at level $n=4 $. Thus we have
$x_1\equiv 4 \ ({\rm mod} \ 16)$ and $x_2\equiv 12 \ ({\rm mod} \
16)$.

For each $k\geq 5$, we consider the union of the
 two balls $(x_1 + 2^{k-1}+ 2^k \mathbb{Z}_2)\cup (x_2 + 2^{k-1}+ 2^k
 \mathbb{Z}_2)$. We study the cycle $(s_1,s_2) $ where $s_1\equiv x_1+2^{k-1} \ ({\rm mod} \
 2^k)$, $s_2\equiv x_2+2^{k-1} \ ({\rm mod} \ 2^k)$ and $0\leq s_1,s_2< 2^k $.
For every point $x_1 + 2^{k-1} + t2^k \in x_1 + 2^{k-1}+ 2^k
\mathbb{Z}_2, \ (t\in \mathbb{Z}_2) $, we have
\begin{equation}\label{calculation312}
  \begin{split}
&f^2(x_1 + 2^{k-1} + t2^k)-(x_1 + 2^{k-1} + t2^k)\\
=&2^3\left(\frac{x_1}{4}+2^{k-3}+t2^{k-2}\right)\left(\frac{x_1}{2}+2^{k-2}+t2^{k-1}-2m-1\right)\cdot
\Phi,
  \end{split}
\end{equation}
where
\begin{eqnarray*}
  \Phi:= 2x_1(2^{k-1}+t2^k)+(2^{k-1}+t2^k)^2-4m(2^{k-1}+t2^k).
\end{eqnarray*}

 Here we have used the property that $x_1 $ is a solution of
the equation $x^2-4mx-4m=0 $.

Since $v_2(m)=2 $ and $v_2(x_1)=2 $, we get $v_2(\Phi)=k+2 $. Thus
\begin{eqnarray*}
f^2(x_1 + 2^{k-1} + t2^k)-(x_1 + 2^{k-1} + t2^k)
&\equiv&  0 \ ({\rm mod} \ 2^{k+5})\\
&\not\equiv&  0 \ ({\rm mod} \ 2^{k+6}).
\end{eqnarray*}
Hence we have (3).

Now we are left to treat the case $v_2(m)\geq 4 $. In this case the
equation $x^2-4mx-4m=0 $ admits solutions if and only if
$v_2(m-2^{v_2(m)}) \geq v_2(m)+3. $

We still consider $2^{n-2}+2^{n-1}\mathbb{Z}_2$ with $ n =
\frac{v_2(m)}{2}+3 $. If $v_2(m-2^{v_2(m)}) < v_2(m)+3 $, then
$v_2(\Theta)=v_2(m-2^{v_2(m)}) $ and for any $t\in \mathbb{Z}_2 $
\begin{eqnarray*}
f^2(2^{n-2}+t2^{n-1})-(2^{n-2}+t2^{n-1})
&\equiv&  0 \ ({\rm mod} \ 2^{v_2(m-2^{v_2(m)})+n+1})\\
&\not\equiv&  0 \ ({\rm mod} \ 2^{v_2(m-2^{v_2(m)})+n+2}).
\end{eqnarray*}
Then we will obtain (4).

       If $v_2(m-2^{v_2(m)}) \geq v_2(m)+3 $, then $2^{n-2}+2^{n-1}\mathbb{Z}_2$ consists of a $2$-periodic
       orbit:
       \begin{eqnarray*}
 x'_1=2^{\frac{v_2(m)}{2}+1}\left(2^{-\frac{v_2(m)}{2}}m+ \sqrt{2^{-v_2(m)}m(m+1)}\right),
 \\
 x'_2=2^{\frac{v_2(m)}{2}+1}\left(2^{-\frac{v_2(m)}{2}}m-
 \sqrt{2^{-v_2(m)}m(m+1)}\right).
       \end{eqnarray*}

For each $k\geq \frac{v_2(m)}{2}+4$, we consider
   $(x'_1 + 2^{k-1}+ 2^k \mathbb{Z}_2)\cup (x'_2 + 2^{k-1}+ 2^k
   \mathbb{Z}_2)$.
For every point $x'_1 + 2^{k-1} + t2^k \in x'_1 + 2^{k-1}+ 2^k
\mathbb{Z}_2, \ (t\in \mathbb{Z}_2) $, we have the same calculation
as (\ref{calculation312}). Since $k\geq \frac{v_2(m)}{2}+4$ and
$v_2(x'_1)=\frac{v_2(m)}{2}+1 $, we get
$v_2(\frac{x'_1}{4}+2^{k-3}+t2^{k-2})=\frac{v_2(m)}{2}-2 $ and
$v_2(\Phi)=\frac{v_2(m)}{2}+k $.
 Thus
\begin{eqnarray*}
f^2(x'_1 + 2^{k-1} + t2^k)-(x'_1 + 2^{k-1} + t2^k)
&\equiv&  0 \ ({\rm mod} \ 2^{v_2(m)+k+1})\\
&\not\equiv&  0 \ ({\rm mod} \ 2^{v_2(m)+k+2}).
\end{eqnarray*}
Hence we have (5). This completes the proof. \qed

\end{document}